\setlist[enumerate]{itemsep=2pt,parsep=2pt,before={\parskip=2pt}}
\newcommand{\cosimp}[3]{\xymatrix@R=50pt@C=50pt@1{#1 \ar@<.4ex>[r] \ar@<-.4ex>[r] & {\ }#2 \ar@<0.8ex>[r] \ar[r] \ar@<-.8ex>[r] & {\ } #3 \ar@<1.2ex>[r] \ar@<.4ex>[r] \ar@<-.4ex>[r] \ar@<-1.2ex>[r] & \cdots }}
\newcommand{\adjunction}[4]{\xymatrix@R=50pt@C=50pt@1{#1{\ } \ar@<0.3ex>[r]^-{ {\scriptstyle #2}} & {\ } #3 \ar@<0.3ex>[l]^{ {\scriptstyle #4}}}}
\newcommand{\defeq}{\vcentcolon=}
\newtheorem{theorem}{Theorem}[section]
\newtheorem*{theorem*}{Theorem}
\newtheorem*{definition*}{Definition}
\newtheorem{proposition}[theorem]{Proposition}
\newtheorem{lemma}[theorem]{Lemma}
\newtheorem{corollary}[theorem]{Corollary}
\theoremstyle{definition}
\newtheorem{definition}[theorem]{Definition}
\newtheorem{question}[theorem]{Question}
\newtheorem{remark}[theorem]{Remark}
\newtheorem{notation}[theorem]{Notation}
\DeclareMathOperator{\Stab}{Stab}
\DeclareMathOperator{\Frac}{Frac}
\DeclareMathOperator{\mmod}{mod}
\DeclareMathOperator{\Hilb}{Hilb}
\DeclareMathOperator{\SB}{SB}
\DeclareMathOperator{\cl}{cl}
\newcommand{\Vect}{\operatorname{Vect}}
\newcommand{\Spec}{\operatorname{Spec}}
\newcommand{\Id}{\operatorname{Id}}
\newcommand{\Gal}{\operatorname{Gal}}
\newcommand{\Sym}{\operatorname{Sym}}
\newcommand{\Gr}{\operatorname{Gr}}
\newcommand{\Bl}{\operatorname{Bl}}
\newcommand{\codim}{\operatorname{codim}}
\def \ZZ {\mathbb{Z}}
\def \CC {\mathbb{C}}
\def \AA {\mathbb{A}}
\def \NN {\mathbb{N}}
\def \kk {\Bbbk{}}
\def \LL {\mathbb{L}}
\def \PP {\mathbb{P}}
\def \ge {\geqslant}
\def \le {\leqslant}
\def \kk {\Bbbk{}}
\newcommand{\cE}{\mathcal{E}}
\newcommand{\cF}{\mathcal{F}}
\newcommand{\cG}{\mathcal{G}}
\newcommand{\cL}{\mathcal{L}}
\newcommand{\cV}{\mathcal{V}}
\newcommand{\cO}{\mathcal{O}}
\begin{document}

\title[Singularities of symmetric powers and irrationality of zeta functions]{Singularities of symmetric powers and irrationality of motivic zeta functions}
\author{Vladimir Shein}
\address{Department of Mathematics, Northwestern University, Evanston, IL 60208, USA}
\email{VladimirShein2029@u.northwestern.edu}

\begin{abstract}

Let $K_0(\mathcal{V}_{K})$ be the Grothendieck ring of varieties over a field $K$ of characteristic zero, and let $\mathbb{L} = [\mathbb{A}^1_K]$ denote the Lefschetz class. We prove that if a $K$-variety has $\mathbb{L}$-rational singularities, then all its symmetric powers also have $\mathbb{L}$-rational singularities. We then use this result to show that, for a smooth complex projective variety $X$ of dimension greater than one, the rationality of its Kapranov motivic zeta function $Z(X, t)$ (viewed as a formal power series over $K_0(\mathcal{V}_{\mathbb{C}})$) implies that the Kodaira dimension of $X$ is negative and that $X$ does not admit global nonzero differential forms of even degree. This extends the irrationality part of the Larsen–Lunts rationality criterion from the surface case to arbitrary dimension. We also discuss some applications of these results.
 
\end{abstract}

\maketitle


\section{Introduction}\label{section:introduction}

Throughout this text, $K$ denotes a field of characteristic zero. By a variety over $K$ we mean a reduced separated  scheme of finite type over $K$. For a point $x$ of a variety $X$ over $K$, we denote by $\kk(x)$ the residue field of $x$. If $X$ is smooth and projective, we denote its Kodaira dimension by $\kappa(X)$. 

Let $S$ be a variety over $K$, and let $\ZZ[\cV_S]$ be the free abelian group  generated by the isomorphism classes $[X]_S$ of  reduced separated $S$-schemes of finite type. By definition, the \textit{Grothendieck ring} $K_0(\cV_S)$ \textit{of varieties over} $S$ is the quotient of $\ZZ[\cV_S]$ by the subgroup generated by the elements
\[
[X]_S - [Z]_S - [X \, \backslash \, Z]_S
\]
for all pairs $(X, Z)$ such that $Z$ is a closed subscheme of $X$. The product in $K_0(\cV_S)$ is defined by $[X]_S \cdot [Y]_S = [(X \times_{S} Y)]_S$ on generators and extended by linearity. The class of $\AA_S^1$ in $K_0(\cV_S)$ is denoted by $\LL_S$. If $S = \Spec{K}$ (or if $S$ is understood), we simply write $[X]$ (resp. $\LL$) instead of $[X]_{S}$ (resp. $\LL_S$).

While the structure of $K_0(\cV_K)$ is in general poorly understood, there is a natural description for the quotient ring $K_0(\cV_K)/(\LL)$. Denote by $\SB/K$ the set of stably birational equivalence classes $\langle X \rangle$ of irreducible varieties over $K$ (recall that  $X, Y$ are called \textit{stably birational} if $X \times \PP^{n}$ is birational to $Y \times \PP^{m}$ for some $n, m \ge 0$). It is a commutative semigroup, with multiplication induced by the rule $\langle X \rangle \cdot \langle Y \rangle = \langle X \times Y \rangle$. Let $\ZZ[\SB/K]$ be the associated semigroup ring.

\begin{theorem}[{\cite{Lar_Lun}}\footnote{The original proof is over $\CC$ but it works over any field $K$ of characteristic zero because the main ingredient of the proof, the birational factorization theorem (\cite[Theorem 0.3.1, Remark 2]{AKMW}), holds over such $K$.}]  \label{Lar_Lun}
There is a unique ring isomorphism
\[
\Phi: K_0(\cV_K)/(\LL) \overset{\sim}{\longrightarrow} \ZZ[\SB/K]
\]
such that $\Phi([X]) = \langle X \rangle$ for every smooth connected projective variety $X$.
\end{theorem}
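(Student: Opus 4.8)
The plan is to build $\Phi$ together with an explicit two-sided inverse, using the two standard structural inputs available in characteristic zero: resolution of singularities and the weak factorization theorem of \cite{AKMW}. It is cleanest to work through the presentation of $K_0(\cV_K)$ that weak factorization yields (due to Bittner): $K_0(\cV_K)$ is generated by the classes $[X]$ of smooth projective $K$-varieties, subject to $[\emptyset]=0$ and the blow-up relations $[\Bl_Z X]-[E]=[X]-[Z]$, where $X$ is smooth projective, $Z\subseteq X$ is a smooth closed subvariety and $E\subseteq\Bl_Z X$ its exceptional divisor. I would define $\Phi$ on generators by $\Phi([X])=\sum_i\langle X_i\rangle$, where $X_i$ are the connected components of a smooth projective $X$ --- each being connected and normal, hence irreducible --- and extend $\ZZ$-linearly. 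The blow-up relations are respected because $\Bl_Z X\to X$ is birational and $E\to Z$ is a Zariski-locally trivial projective bundle, so componentwise $\langle\Bl_Z X\rangle=\langle X\rangle$ and $\langle E\rangle=\langle Z\rangle$, and both sides of the relation map to $\langle X\rangle-\langle Z\rangle$; compatibility with products is checked similarly. Thus $\Phi$ is a well-defined ring homomorphism $K_0(\cV_K)\to\ZZ[\SB/K]$. Since $[\AA^1]=[\PP^1]-[\mathrm{pt}]$ in $K_0(\cV_K)$ and $\PP^1$ is rational, $\Phi(\LL)=\langle\PP^1\rangle-\langle\mathrm{pt}\rangle=1-1=0$; as $\Phi$ is multiplicative, $(\LL)$ lies in its kernel, so $\Phi$ descends to a ring homomorphism $K_0(\cV_K)/(\LL)\to\ZZ[\SB/K]$.

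For the inverse, I would use resolution of singularities to attach to each irreducible $K$-variety $X$ a smooth projective model $\widetilde X$ (a smooth projective variety birational to $X$) and set $\Psi(\langle X\rangle)=[\widetilde X]\bmod(\LL)$. The heart of the matter --- and essentially the only nonformal step --- is that this is independent of the chosen model: two smooth projective models of $X$ are birational, hence by \cite{AKMW} linked by a chain of blow-ups and blow-downs along smooth centers, and each such step changes the class by $[E]-[Z]$ with $E\to Z$ a Zariski-locally trivial $\PP^r$-bundle, so that $[E]=[Z]\cdot(1+\LL+\cdots+\LL^r)\equiv[Z]\pmod{(\LL)}$; thus the class modulo $(\LL)$ is unchanged. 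The same computation gives $[\widetilde X\times\PP^n]\equiv[\widetilde X]\pmod{(\LL)}$, so $\Psi$ is well-defined on stable birational classes. It is then routine that $\Psi$ is additive, sends $\langle\mathrm{pt}\rangle$ to $1$, and is multiplicative (because $\widetilde X\times\widetilde Y$ is a smooth projective model of $X\times Y$), hence is a ring homomorphism $\ZZ[\SB/K]\to K_0(\cV_K)/(\LL)$.

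It remains to check that $\Phi$ and $\Psi$ are mutually inverse, which is formal: for $X$ smooth connected projective, $\Psi(\Phi([X]))=\Psi(\langle X\rangle)=[X]\bmod(\LL)$ (taking $X$ as its own model), and such classes generate $K_0(\cV_K)/(\LL)$ by Bittner's presentation, so $\Psi\circ\Phi=\id$; for $X$ irreducible, $\Phi(\Psi(\langle X\rangle))=\Phi([\widetilde X])=\langle\widetilde X\rangle=\langle X\rangle$, so $\Phi\circ\Psi=\id$. Uniqueness follows because any ring homomorphism out of $K_0(\cV_K)/(\LL)$ is determined by its values on the classes of smooth connected projective varieties, which generate the ring. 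The step I expect to be the main obstacle is the well-definedness of $\Psi$, i.e.\ the invariance modulo $(\LL)$ of the class of a smooth projective model under blow-up along a smooth center; this is exactly where the weak factorization theorem and the projective-bundle identity $[\PP(\mathcal{E})]=[Z]\cdot(1+\LL+\cdots+\LL^r)$ carry the argument, with everything else reducing to bookkeeping around Bittner's presentation.
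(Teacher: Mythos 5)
The paper does not prove this theorem; it states it as a citation to \cite{Lar_Lun}, with a footnote only noting that the argument extends from $\CC$ to arbitrary characteristic-zero fields because weak factorization holds there. Your reconstruction is correct and is essentially the Larsen--Lunts argument: Bittner's presentation (a repackaging of weak factorization) gives $\Phi$ on smooth connected projective generators with the blow-up relations and the projective-bundle formula $[E]=[Z](1+\LL+\cdots+\LL^{r})$ handling both directions, while resolution of singularities plus weak factorization yields the inverse $\Psi$ by sending a stable birational class to the class of any smooth projective model modulo $(\LL)$.
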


Theorem \ref{Lar_Lun} tells us that for a smooth connected projective variety $Y$, the image of its class $[Y]$ in $K_0(\cV_K)/(\LL)$ records the stable birational equivalence class of $Y$. However, if $Y$ is not smooth, then the geometric interpretation of its class in $K_0(\cV_K)/(\LL)$ is less clear.

For a quasi-projective variety $X$ over $K$, we denote by $\Sym^n(X)$ the \textit{$n$-fold symmetric power} of $X$. By definition, it is the quotient of $X^n$ by the permutation action of the symmetric group $S_n$. We adopt the convention that $\Sym^0(X) = \Spec{K}$.

The ring $K_0(\cV_K)$ has a natural $\lambda$-structure (in the sense of Def. \ref{subsection:lambda_rings}), defined by $\lambda^n([X]) = [\Sym^n(X)]$\footnote{As pointed out in \cite[Section 8]{LL2}, this $\lambda$-structure is \textit{not} special.}.
The \textit{Kapranov motivic zeta function} of $X$ is defined as the formal power series
\[
Z(X, t) \defeq \lambda_t([X])  = \sum_{n \ge 0} [\Sym^n(X)]t^n \in 1+tK_0(\cV_K)[[t]].
\]

M. Kapranov conjectured that it is natural to expect that $Z(X, t)$ is a rational function (as a formal power series over $K_0(\cV_K)$) for an arbitrary variety $X$ ~\mbox{\cite[Remark~1.3.5]{Kapranov}}. He showed that this is the case if $X$ is a curve with a rational point, and D. Litt later generalized this result to arbitrary geometrically connected curves ~\mbox{\cite[Theorem~10]{Litt2}}. However, it turned out that rationality fails in dimension two. It was proved by M. Larsen and V. Lunts ~\mbox{\cite[Theorem~1.1]{LL2}} that for a smooth complex projective surface $X$, the zeta function $Z(X, t)$ is rational if and only if $X$ has negative Kodaira dimension. Moreover, they proved irrationality in a stronger sense by showing that if $\kappa(X) \ge 0$, then $Z(X, t)$ is not \textit{pointwise} rational (see Def. \ref{def:rationality}). In this paper we prove the following generalization of this result to higher dimensions:

\begin{theorem} \label{irrationality_result}

Let $X$ be a smooth connected complex projective variety of dimension $d > 1$.  Assume that at least one of the following conditions hold:
\begin{itemize}
    \item[(a)] $\kappa(X) \ge 0$ (equivalently, there exists $n > 0$ such that $H^0(X, \omega_X^{\otimes n}) \neq 0$).
    \item[(b)] There exists $i > 0$ such that $H^0(X, \Omega_X^{2i}) \neq 0$.
\end{itemize}
Then $Z(X, t)$ is not pointwise rational.

\end{theorem}

\noindent This theorem thus shows that rationality of $Z(X, t)$ in fact imposes very strong conditions on the geometry of $X$.

Currently, the main tool for proving irrationality is to find a ring homomorphism $\mu: K_0(\cV_K) \to A$ for some ring $A$ (called a \textit{motivic measure}) such that the image of $Z(X, t)$ under $\mu$ is not rational as a formal power series in $A[[t]]$. We use the motivic measures
\[
\mu_k: K_0(\cV_K) \to \ZZ[M], 
\]
(see \ref{Measures} for the definition) constructed in ~\mbox{\cite{LL2}}, where $M$ denotes the multiplicative semigroup of polynomials $f \in \ZZ[s]$ with constant term $1$.

However, it is difficult to analyze the classes $\mu_k(\Sym^m(X))$ directly, because the measures $\mu_k$ have a natural geometric interpretation only for smooth connected projective  varieties, while $\Sym^m(X)$ is not smooth whenever $\dim(X) > 1$ and $m > 1$. In ~\mbox{\cite{LL2}}, this difficulty was overcome by working with the Hilbert scheme $\Hilb^m(X)$ of length $m$ subschemes of $X$, which is a resolution of singularities of $\Sym^m(X)$ when $\dim(X)=2$. However, if $\dim(X) > 2$, then $\Hilb^m(X)$ is also singular. For this reason, in this paper we work with an arbitrary resolution of singularities $\widetilde{\Sym}^m(X)$ of $\Sym^m(X)$ instead.

The key result that makes our arguments possible is the affirmative answer to part (a) of the following question of Larsen and Lunts:

\begin{question}\cite[Question 6.7]{LL2} \label{question} \begin{enumerate}
\item[(a)] Let $X$ be a smooth projective variety over $K$. Is it true that
\[
[\widetilde{\Sym}^m(X)] = [\Sym^m(X)]
\]
in $K_0(\cV_K)/({\LL})$? In other words, is the class $[\widetilde{\Sym}^m(X)] - [\Sym^m(X)]$ always divisible by $[\AA^1]$?

\item[(b)] More generally, let $Z$ be a smooth projective variety over $K$ with the action of a finite group $G$. Is it true that
\[
[\widetilde{Z/G}] = [Z/G]
\]
in $K_0(\cV_K)/({\LL})$, where $\widetilde{Z/G}$ is a resolution of singularities of $Z/G$?

\end{enumerate}

\end{question}

This question has been thoroughly studied in \cite{ES}. The authors made progress towards answering part (a), and also gave a negative answer to part (b), proving in particular that if $BG$ is not stably rational, then for any faithful $K$-linear representation $V$ of $G$ containing at least one copy of the trivial representation, one has $[\widetilde{\PP(V)/G}] \neq [\PP(V) / G]$ in $K_0(\cV_K)/({\LL})$.  In Section \ref{sec:singularities}, we show that the answer to Question \ref{question}(a) is positive, without using the results of \cite{ES}. Let us briefly explain our strategy.

We use the notion of $\LL$-rational singularities (Def. \ref{L_rat_fiber}), introduced in ~\mbox{\cite[Definition 4.2.4]{NS}}. As proved in \cite[Lemma 4.2.5]{NS}, if $Y$ has $\LL$-rational singularities, then $[\widetilde{Y}] = [Y]$ in $K_0(\cV_K)/({\LL})$ for any resolution of singularities $\widetilde{Y}$ of $Y$. We prove the following

\begin{theorem} \label{resolution}

Let $X$ be a quasi-projective variety over $K$. If $X$ has $\LL$-rational singularities, then  for every $n \in \ZZ_{>0}$ the $n$-th symmetric power $\Sym^n(X)$ has $\LL$-rational singularities.

\end{theorem}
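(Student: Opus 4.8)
The overall plan is to reduce Theorem~\ref{resolution} to the case $X=\AA^k$ — i.e.\ to the assertion that $\Sym^n(\AA^k)$ has $\LL$-rational singularities for all $n,k\ge 0$ (Proposition~\ref{main_proposition}, proved by induction on $n$) — then to use this to settle the case of smooth $X$, and finally to bootstrap from smooth $X$ to arbitrary $X$ by means of Proposition~\ref{sym_have_rational_fibers}(b). I will use freely two elementary facts. First, a product of varieties with $\LL$-rational singularities again has $\LL$-rational singularities (apply Proposition~\ref{sym_have_rational_fibers}(a) to resolutions of the factors, whose product resolves the product). Second, proper morphisms with $\LL$-rational fibers are closed under composition: for $Z\xrightarrow{g}Y\xrightarrow{f}X$ and $x\in X$, the base change of $g$ over $f^{-1}(x)$ is proper with $\LL$-rational fibers, so Corollary~\ref{classes_coincide} applied over $\kk(x)$ gives $[g^{-1}(f^{-1}(x))]=[f^{-1}(x)]=1$ in $K_0(\cV_{\kk(x)})/(\LL)$. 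The same type of argument combined with the weak factorization theorem (cf.\ Remark~\ref{independence}) shows moreover that $X$ has $\LL$-rational singularities as soon as \emph{some} proper birational $W\to X$ with $W$ smooth has $\LL$-rational fibers.

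\emph{From the smooth case to the general case.} Assume Theorem~\ref{resolution} holds for smooth varieties, let $X$ have $\LL$-rational singularities, and fix a resolution $\pi\colon\widetilde X\to X$ with $\LL$-rational fibers. Since $\widetilde X$ is smooth, $\Sym^n(\widetilde X)$ has $\LL$-rational singularities, hence any resolution $\rho\colon W\to\Sym^n(\widetilde X)$ has $\LL$-rational fibers. The composite $W\xrightarrow{\rho}\Sym^n(\widetilde X)\xrightarrow{\Sym^n(\pi)}\Sym^n(X)$ is proper and birational with smooth source; by Proposition~\ref{sym_have_rational_fibers}(b) the map $\Sym^n(\pi)$ has $\LL$-rational fibers, so by the composition fact so does the composite, and therefore $\Sym^n(X)$ has $\LL$-rational singularities.

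\emph{The smooth case.} Let $Y$ be smooth and apply Lemma~\ref{etale_resolution} to $\Sym^n(Y)$. A closed point of $\Sym^n(Y)$ is a cycle $\sum_j m_j[y_j]$ with the $y_j$ pairwise distinct; choosing pairwise disjoint affine neighbourhoods $U_j\ni y_j$ together with étale maps $U_j\to\AA^{k_j}$ (which exist because $Y$ is smooth) produces an open neighbourhood $\prod_j\Sym^{m_j}(U_j)$ of that point. The point to check is that for an étale morphism $\phi\colon U\to\AA^k$ and $y\in U$, the morphism $\Sym^m(\phi)\colon\Sym^m(U)\to\Sym^m(\AA^k)$ is étale near $m[y]$: the map $\phi^m$ is $S_m$-equivariant and étale, the points $(y,\dots,y)$ and $(\phi(y),\dots,\phi(y))$ both have stabiliser all of $S_m$, and passing to $S_m$-quotients of an equivariant étale morphism is étale wherever source and target stabilisers coincide. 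Hence $\prod_j\Sym^{m_j}(U_j)$ is étale over $\prod_j\Sym^{m_j}(\AA^{k_j})$, which has $\LL$-rational singularities by Proposition~\ref{main_proposition} and the product fact, so Lemma~\ref{etale_resolution} gives the claim for $\Sym^n(Y)$.

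\emph{Proof of Proposition~\ref{main_proposition}; the main obstacle.} The proof is by induction on $n$, with $\Sym^0(\AA^k)$ and $\Sym^1(\AA^k)=\AA^k$ smooth. In the inductive step, the support decomposition and disjoint-neighbourhood argument used in the smooth case — now combined with the inductive hypothesis, with Zariski-locality of $\LL$-rational singularities, and with the étale-stability point above — shows that $\Sym^n(\AA^k)$ has $\LL$-rational singularities at every closed point not lying on the small diagonal $\Delta=\{\,n[x]:x\in\AA^k\,\}\cong\AA^k$. The crux is then the most degenerate point $n\cdot[0]$, where the inductive hypothesis gives nothing directly. Here I would pick a resolution equivariant for the translation and scaling actions of $\AA^k$ and $\Gm$ on $\Sym^n(\AA^k)$, so that the fibres over $\Delta$ are all isomorphic and $n\cdot[0]$ is an attracting fixed point; this reduces the problem to showing that the class of the fibre over $n\cdot[0]$ equals $1$ in $K_0(\cV_K)/(\LL)$. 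To evaluate this class one invokes Lemma~\ref{lambda_structure_on_R}: the $\lambda$-structure it furnishes on $R_{\AA^k}=\bigoplus_i K_0(\cV_{\Sym^i(\AA^k)})$, for which $(\LL_R)$ is a $\lambda$-ideal, together with the ring grading (whose multiplication links degree $n$ to strictly smaller degrees), should let one rewrite the relevant class in $R_{\AA^k}/(\LL_R)$ in terms of $\lambda$-operations on and products of classes attached to $\Sym^m(\AA^k)$ with $m<n$, to which the inductive hypothesis applies. I expect pinning down this $\lambda$-theoretic identity and verifying that it does force the fibre class over $n\cdot[0]$ to be $1$ to be the technical heart of the whole argument.
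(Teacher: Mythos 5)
The reductions you outline — from general $X$ to smooth $X$, from smooth $X$ to $\Sym^n(\AA^k)$ via Lemma~\ref{etale_resolution}, and the induction on $n$ handling the complement of the small diagonal $\Delta^n_{\AA^k}$ by disjoint affine neighbourhoods and the inductive hypothesis — all match the paper in substance, and your disjoint-neighbourhood route to Step~2 is arguably cleaner than the paper's argument via $\AA^{kn}/H_\lambda$. You also correctly locate the crux: computing $[\pi_n^{-1}(0)]$ in $K_0(\cV_K)/(\LL)$, where $\pi_n$ resolves $N/S_n$ (equivalently, the fibre of a suitably equivariant resolution over a point of the small diagonal).

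However, at that crux you have a genuine gap, and you say so yourself. Your proposed route — extracting the identity $[\pi_n^{-1}(0)]=1$ from the $\lambda$-ring $R_{\AA^k}/(\LL_R)$ of Lemma~\ref{lambda_structure_on_R} — is not what that lemma does, and I do not see how it could be made to. In the paper, Lemma~\ref{lambda_structure_on_R} only feeds into Corollary~\ref{unit_is_preserved} and thence Proposition~\ref{sym_have_rational_fibers}(b), i.e.\ it shows that the property ``$[X]=1$ modulo $\LL$'' over a base is preserved by $\Sym^n$; it says nothing directly about the fibre of a resolution over the deepest stratum, and the grading by itself does not link $[\pi_n^{-1}(0)]$ to lower-degree pieces in any evident way. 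The actual key idea you are missing is a compactification trick: replace $\AA^k$ by $\PP^k$, observe that $\Sym^n(\PP^k)$ can be covered by copies of $\Sym^n(\AA^k)$ compatibly with the stratification, so that a functorial resolution $\widetilde{\Sym}^n(\PP^k)\to\Sym^n(\PP^k)$ has $\LL$-rational fibres over the complement $U'_n$ of the small diagonal (by Step~2) and is a piecewise trivial fibration with fibre $\pi_n^{-1}(0)$ over $\Delta^n_{\PP^k}\cong\PP^k$ (by Step~1). Since $\Sym^n(\PP^k)$ is rational (\cite[Ch.~4, Thm.~2.8]{Discriminants}), $\widetilde{\Sym}^n(\PP^k)$ is a smooth projective rational variety, so $[\widetilde{\Sym}^n(\PP^k)]=1$ in $K_0(\cV_K)/(\LL)$ by Theorem~\ref{Lar_Lun}. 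Decomposing this class then yields
\[
1 \equiv [U'_n] + [\pi_n^{-1}(0)]\cdot[\PP^k] \equiv \bigl([\Sym^n(\PP^k)]-[\PP^k]\bigr) + [\pi_n^{-1}(0)] \equiv [\pi_n^{-1}(0)]
\]
in $K_0(\cV_K)/(\LL)$, which is exactly the identity you needed. Without some global input of this type forcing the fibre class over the most degenerate point to be $1$, the induction cannot close, and the $\lambda$-structure of Lemma~\ref{lambda_structure_on_R} alone does not supply it.
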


This result, which may be of independent interest, in particular implies an affirmative answer to \ref{question}(a). We first establish a certain lemma about the Grothendieck ring of varieties (Lemma \ref{lambda_structure_on_R}), which allows us to use an inductive argument on $n$ to prove that $\Sym^n(\AA^k)$ has $\LL$-rational singularities for all $n, k \ge 0$ (Proposition \ref{main_proposition}). We then show that the general case of Theorem \ref{resolution} can be deduced from this.

After that, we prove Theorem \ref{irrationality_result} in Section \ref{section:main}. Our proof largely relies on the ideas of 
\cite{LL2}, though we generalize their strategy in several ways. As mentioned above, our main generalization is that we work with an arbitrary resolution $\widetilde{\Sym}^m(X)$. This is made possible mainly by Theorem \ref{resolution} and the result of D.Arapura and S. Archava, which states that singularities of ${\Sym}^m(X)$ are canonical ~\mbox{\cite[Proposition~1]{AA}}. More precisely, Theorem \ref{resolution} implies that $\mu_k(\Sym^m(X)) = \mu_k(\widetilde{\Sym}^m(X))$ for all $k \ge 1$, while ~\mbox{\cite[Proposition~1]{AA}} helps us study $\mu_k(\widetilde{\Sym}^m(X))$. We then show that, under the assumptions of Theorem \ref{irrationality_result}, one can find $k$ such that $\mu_k(Z(X, t))$ is not rational.

Finally, in Section \ref{sec:applications}, we discuss a couple of applications of the results from Sections \ref{sec:singularities} and \ref{section:main}. We show that Theorem \ref{resolution} together with the result of J. Koll\'{a}r allows us to easily compute the image of $Z(B, t)$ in $(K_0(\cV_K)/(\LL))[[t]]$ for a Severi-Brauer variety $B$ (Proposition \ref{ratinoalinquotient}). We then explain that, if $X$ satisfies the assumptions of Theorem \ref{irrationality_result}, one can use the measures $\mu_k$ to detect which symmetric powers of $X$ are not stably birational to each other (Proposition \ref{sym_are_not_stably_birational}), which generalizes {\cite[Theorem~19]{Litt}}. We also note that a positive answer to Question \ref{question}(a) implies that the motivic measure $\mu_1: K_0(\cV_K) \to \ZZ[M]$ is a homomorphism of $\lambda$-rings (Proposition \ref{hom_of_lambda}), which was already observed by Larsen and Lunts.

Let us also mention that, for the purposes of motivic integration, it is more natural to consider the localized ring $K_0(\cV_K)[1/\LL]$ instead of $K_0(\cV_K)$. Thus, one might ask when $Z(X, t)$ is rational as a formal power series over $K_0(\cV_K)[1/\LL]$. We do not investigate this interesting question in this paper, although we expect that the rationality of $Z(X, t)$ over $K_0(\cV_K)[1/\LL]$ should also impose strong restrictions on the geometry of $X$.

\textbf{Acknowledgements.} I am deeply grateful to Valery Lunts for a lot of fruitful discussions and valuable suggestions. I would also like to thank Michal Larsen for several helpful conversations and pieces of advice, and Sándor Kovács and Evgeny Shinder for answering my questions.

\vspace{0.3cm}

\section{Singularities of symmetric powers}\label{sec:singularities}

Following ~\mbox{\cite[Definition 4.2.4]{NS}} and ~\mbox{\cite[Definition 2.7]{ES}}, we introduce the notion of $\LL$-rational fibers and singularities. Our main result in this section is Theorem \ref{resolution} stating that symmetric powers of varieties with $\LL$-rational singularities again have $\LL$-rational singularities. We usually write $\pi_X: \widetilde{X} \to X$ for a resolution of singularities of $X$, in the sense that $\widetilde{X}$ is smooth and $\pi_X$ is proper, birational and an isomorphism away from the singular locus of $X$.

\begin{definition} \label{L_rat_fiber}

Let $X, Y$ be varieties over $K$. We say that a proper morphism $f: Y \to X$ has $\LL$-\textit{rational fibers} if for every (not necessarily closed) point $x$ of $X$ one has $[f^{-1}(x)] = 1$ in $K_0(\cV_{\kk(x)})/(\LL)$. We say that $X$ has $\LL$\textit{-rational singularities} if  $\pi_X: \widetilde{X} \to X$ has $\LL$-rational fibers for some resolution of singularities of $X$.

\end{definition}

\begin{remark} \label{independence}
    
It follows from the weak birational factorization theorem \cite[Theorem 0.3.1]{AKMW} that if $[\pi_X^{-1}(x)] = 1$ in $K_0(\cV_{\kk(x)})/(\LL)$ for some resolution of singularities $\pi_X: \widetilde{X} \to X$, then the same is true for any other resolution of singularities (\cite[Page 394]{NS}). In particular, the property of having $\LL${-rational singularities} does not depend on the choice of a resolution.
\end{remark}

\begin{lemma} \label{local}

Let $f: Y \to X$ be a proper morphism of varieties over $K$. The following are equivalent:

\begin{enumerate}

\item[1.] $f$ has $\LL$-rational fibers.

\item[2.] There exists a decomposition $X = \bigsqcup_{i = 1}^{r} W_i$ of $X$ into locally closed subsets such that $[f^{-1}(W_i)] = 1$ in $K_0(\cV_{W_i})/(\LL_{W_i})$ for every $i$.

\item[3.] There exists a decomposition $X = \bigsqcup_{i = 1}^{r} W_i$ of $X$ into locally closed subsets such that $f^{-1}(W_i) \to W_i$ has $\LL$-rational fibers.

\end{enumerate}

\end{lemma}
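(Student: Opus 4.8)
The plan is to establish the cycle of implications $(2)\Rightarrow(3)\Rightarrow(1)\Rightarrow(2)$; only the last has real content. For $(2)\Rightarrow(3)$ I would invoke functoriality of the relative Grothendieck ring: any morphism of $K$-schemes $S'\to S$ induces a ring homomorphism $K_0(\cV_S)\to K_0(\cV_{S'})$, $[Z\to S]\mapsto[Z\times_S S'\to S']$ (base change preserves closed immersions, their complements, and fibre products), sending $\LL_S$ to $\LL_{S'}$, hence descending to a homomorphism $K_0(\cV_S)/(\LL_S)\to K_0(\cV_{S'})/(\LL_{S'})$. Taking $S=W_i$ and $S'=\Spec\kk(w)$ for a point $w\in W_i$, and noting that the fibre of $f^{-1}(W_i)\to W_i$ over $w$ is canonically $f^{-1}(w)$ with unchanged residue field (as $W_i\hookrightarrow X$ is an immersion), the relation $[f^{-1}(W_i)]=1$ gives $[f^{-1}(w)]=1$ in $K_0(\cV_{\kk(w)})/(\LL)$, which is $(3)$. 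For $(3)\Rightarrow(1)$: each point $x\in X$ lies in a unique stratum $W_i$, and the fibre of $f^{-1}(W_i)\to W_i$ over $x$ is exactly $f^{-1}(x)$, so $\LL$-rationality of the fibres of every $f^{-1}(W_i)\to W_i$ immediately yields $[f^{-1}(x)]=1$.

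For $(1)\Rightarrow(2)$ I would use Noetherian induction on $X$, the case $X=\varnothing$ being vacuous. Let $X_1,\dots,X_s$ be the irreducible components of $X$ with generic points $\eta_1,\dots,\eta_s$. The key input is a spreading-out statement: for each $j$, since $\kk(\eta_j)$ is the function field of $X_j$ and $\Spec\kk(\eta_j)$ is the projective limit of the dense open subsets $V\subseteq X_j$, the canonical map $\colim_V K_0(\cV_V)\to K_0(\cV_{\kk(\eta_j)})$ is an isomorphism. This rests on the standard limit formalism (EGA IV, §8): every finite-type $\kk(\eta_j)$-scheme, every morphism between two such, and every isomorphism descends to some dense open $V\subseteq X_j$, and two such data agreeing over $\kk(\eta_j)$ already agree over a smaller $V$. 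Given this, from $[f^{-1}(\eta_j)]=1$ in $K_0(\cV_{\kk(\eta_j)})/(\LL)$ — that is, $[f^{-1}(\eta_j)]=1+\LL\cdot b_j$ in $K_0(\cV_{\kk(\eta_j)})$ — I lift $b_j$ to some $K_0(\cV_V)$ by surjectivity, observe that $[f^{-1}(V)]-1-\LL_V\cdot(\text{this lift})$ dies in $K_0(\cV_{\kk(\eta_j)})$, and use injectivity to make it die already in $K_0(\cV_{V'})$ for a smaller dense open $V'\subseteq X_j$; thus $[f^{-1}(V')]=1$ in $K_0(\cV_{V'})/(\LL_{V'})$.

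Finally I would assemble the stratification. For each $j$ let $V_j\subseteq X_j$ be a dense open obtained as in the previous paragraph, so $[f^{-1}(V_j)]=1$ in $K_0(\cV_{V_j})/(\LL_{V_j})$; shrinking $V_j$ preserves this (via the restriction map $K_0(\cV_{V_j})/(\LL_{V_j})\to K_0(\cV_{V_j'})/(\LL_{V_j'})$), so I may assume $V_j\cap X_{j'}=\varnothing$ for $j'\neq j$. Then the $V_j$ are pairwise disjoint locally closed subsets of $X$, the union $Z\defeq\bigcup_j(X_j\setminus V_j)$ is closed and strictly smaller than $X$, and $X=Z\sqcup\bigsqcup_j V_j$. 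The restriction $f^{-1}(Z)\to Z$ is proper with $\LL$-rational fibres, so the inductive hypothesis yields $Z=\bigsqcup_i W_i'$ with each $W_i'$ locally closed in $Z$ — hence in $X$, as $Z$ is closed — and $[f^{-1}(W_i')]=1$ in $K_0(\cV_{W_i'})/(\LL_{W_i'})$; then $X=\bigsqcup_j V_j\sqcup\bigsqcup_i W_i'$ is the decomposition demanded by $(2)$. The genuinely delicate point — and where I expect the real work — is the spreading-out isomorphism $\colim_V K_0(\cV_V)\xrightarrow{\sim}K_0(\cV_{\kk(\eta_j)})$: surjectivity is routine, but injectivity requires noting that a vanishing relation in $K_0(\cV_{\kk(\eta_j)})$ involves only finitely many schemes, morphisms, scissor relations, and identifying isomorphisms, all of which descend compatibly to one common dense open; one should also check that reducedness of the relevant schemes is preserved upon descent (or simply use $[\,\cdot\,]=[(\,\cdot\,)^{\mathrm{red}}]$).
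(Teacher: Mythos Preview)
Your proof is correct and follows essentially the same approach as the paper: the cycle of implications is the same, the substantive step $(1)\Rightarrow(2)$ uses spreading out at generic points followed by Noetherian induction, and the other implications rest on functoriality of the relative Grothendieck ring under base change. The paper is terser---it cites \cite[3.4]{NiSe} for the spreading-out step and leaves the Noetherian induction to the reader---whereas you spell out the colimit isomorphism $\colim_V K_0(\cV_V)\cong K_0(\cV_{\kk(\eta_j)})$ and handle multiple irreducible components explicitly, but there is no genuine difference in strategy.
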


\begin{proof}

$\boxed{1. \Rightarrow 2.}$ Assume $f: Y \to X$ has $\LL$-rational fibers. If $z = \eta_Z$ is a generic point of some irreducible closed subset $Z \subset X$, then $\kk(z) = \varinjlim_{U \subset Z} \cO(U)$, where the limit is taken over all affine open subsets of $Z$. By spreading out (\cite[3.4]{NiSe}), it follows that $[f^{-1}(U)] \equiv [U] \mod{(\LL_{U})}$ in $K_0(\cV_{U})$ for some $U$. The desired decomposition into locally closed subsets now follows by noetherian induction.

$\boxed{2. \Rightarrow 3.}$ Clear (by functoriality of the Grothendieck ring).

$\boxed{3. \Rightarrow 1.}$ For any irreducible closed subset $Z \subset X$ with the generic point $z = \eta_{Z}$ there exists some $i$ such that $Z \cap W_i$ is open in $Z$ and nonempty. Hence, $z \in Z \cap W_i$, so $[f^{-1}(z)] = 1$ in $K_0(\cV_{\kk(z)})/(\LL)$ by functoriality of the Grothendieck ring.

\end{proof}

\begin{corollary}[{\cite[Lemma 4.2.5]{NS}}] \label{classes_coincide}
Let $f: Y \to X$ be a morphism of varieties that has $\LL$-rational fibers. Then $[Y] = [X]$ in $K_0(\cV_{K})/(\LL)$.

\end{corollary}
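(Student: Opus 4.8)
The plan is to reduce to the relative statement in Lemma~\ref{local}(2) and then push everything forward along the forgetful maps $K_0(\cV_S)\to K_0(\cV_K)$. First, since $f$ has $\LL$-rational fibers, I would invoke the implication $1.\Rightarrow 2.$ of Lemma~\ref{local}, which provides a finite decomposition $X=\bigsqcup_{i=1}^{r}W_i$ into locally closed subsets (each equipped with its reduced structure) such that $[f^{-1}(W_i)]_{W_i}=1$ in $K_0(\cV_{W_i})/(\LL_{W_i})$ for every $i$; here $1=[W_i]_{W_i}$ is the unit of $K_0(\cV_{W_i})$.

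Next, for each $i$ I would use the forgetful group homomorphism $\Theta_i\colon K_0(\cV_{W_i})\to K_0(\cV_K)$ defined on generators by $[Z]_{W_i}\mapsto[Z]$ (compose the structure morphism $Z\to W_i$ with $W_i\to\Spec K$ and remember only the resulting $K$-variety). It sends the relative scissor relations to scissor relations, hence is well defined, and a direct check on generators shows $\Theta_i(\LL_{W_i}\cdot\beta)=\LL\cdot\Theta_i(\beta)$ for all $\beta\in K_0(\cV_{W_i})$ (for $\beta=[Z]_{W_i}$ one has $\AA^1_{W_i}\times_{W_i}Z\cong\AA^1\times_K Z$). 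Consequently $\Theta_i$ carries the ideal $(\LL_{W_i})$ into $(\LL)$ and descends to a homomorphism $\overline{\Theta}_i\colon K_0(\cV_{W_i})/(\LL_{W_i})\to K_0(\cV_K)/(\LL)$ with $\overline{\Theta}_i([Z]_{W_i})=[Z]$. Applying $\overline{\Theta}_i$ to the equality $[f^{-1}(W_i)]_{W_i}=[W_i]_{W_i}$ then yields $[f^{-1}(W_i)]=[W_i]$ in $K_0(\cV_K)/(\LL)$ for every $i$.

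Finally, I would assemble the pieces using the scissor relations in $K_0(\cV_K)$: one has $[X]=\sum_{i=1}^{r}[W_i]$, and since $Y=\bigsqcup_{i=1}^{r}f^{-1}(W_i)$ one also has $[Y]=\sum_{i=1}^{r}[f^{-1}(W_i)]$. Summing the $r$ congruences from the previous step over $i$ gives $[Y]=[X]$ in $K_0(\cV_K)/(\LL)$, which is the claim.

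The argument is essentially formal once Lemma~\ref{local} is available; the only point that needs a little care is the bookkeeping with the relative Grothendieck rings, namely that $\Theta_i$ is merely a homomorphism of groups (in fact of $K_0(\cV_K)$-modules) rather than of rings, and that it maps $(\LL_{W_i})$ into $(\LL)$ — both of which are immediate from the definitions. I therefore do not anticipate any serious obstacle.
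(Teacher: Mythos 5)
Your proof is correct and follows the same route as the paper: invoke the implication $1.\Rightarrow 2.$ of Lemma~\ref{local} to get a decomposition $X=\bigsqcup W_i$ with $[f^{-1}(W_i)]_{W_i}\equiv 1$ relatively, then push forward to $K_0(\cV_K)/(\LL)$ and sum via the scissor relations. The only difference is that you make the forgetful homomorphisms $\Theta_i$ and the inclusion $(\LL_{W_i})\mapsto(\LL)$ explicit, whereas the paper leaves this bookkeeping implicit in the one-line chain of congruences.
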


\begin{proof}

Using a decomposition $X = \bigsqcup_{i = 1}^{r} W_i$ from the $\boxed{1. \Rightarrow 2.}$ part of Lemma \ref{local}, we see that
\[
[Y] = \sum_{i=1}^r [f^{-1}(W_i)] \equiv \sum_{i=1}^r [W_i] = [X]
\]
in $K_0(\cV_{K})/(\LL)$.
    
\end{proof}

\begin{definition}
Let $S$ and $F$ be varieties over $K$. We say that a morphism $f: Y \to X$ of $S$-varieties is a piecewise trivial fibration with fiber $F$ if there exists a decomposition $X = \bigsqcup_{i=1}^r W_i$ of $X$ into locally closed subsets such that for every $i$, $f^{-1}(W_i)$ and $ F \times_{K} W_i$ are isomorphic as schemes over $W_i$.

\end{definition}

\begin{remark} \label{locally_trivial_fibration}

Let $f: Y \to X$ be a piecewise trivial fibration of $S$-varieties with fiber $F$. Then we have isomorphisms $f^{-1}(W_i) \overset{\sim}{\rightarrow} F \times_{K }W_i \overset{\sim}{\rightarrow} (F_S) \times_{S} W_i$ of varieties over $W_i$, and in particular over $S$. Therefore
\[
[Y]_S = \sum_{i=1}^r [f^{-1}(W_i)]_S  = \sum_{i=1}^r [F_S \times_S W_i]_S = [F_S] \cdot \sum_{i=1}^r [W_i]_S = [F_S] \cdot [X]_S
\]
in  $K_0(\cV_S)$. In particular, if $F \cong \AA^n$, then $[Y]_S = \LL^n_S [X]_S$ in $K_0(\cV_S)$.
    
\end{remark}

\begin{lemma} 
\label{etale_resolution}

Suppose that for every \textit{closed} point $x \in X$ there exists a variety $V_x$ that has $\LL$-rational singularities and an \'etale morphism $\psi: U_x \to V_x$ for some open neighborhood $U_x$ of $x$. Then $X$ has $\LL$-rational singularities.

\end{lemma}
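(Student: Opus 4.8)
The plan is to reduce the statement to the local criterion in Lemma \ref{local}, exploiting the fact that having $\LL$-rational singularities is an étale-local property. First I would recall that by quasi-compactness of $X$, the open cover $\{U_x\}$ (as $x$ ranges over closed points) admits a finite subcover, and by noetherian induction on $X$ it suffices to treat a single étale morphism $\psi \colon U \to V$ with $V$ having $\LL$-rational singularities and $U$ open in $X$: indeed, once I know each $U_x$ has $\LL$-rational singularities, I can cover $X$ by finitely many such opens, decompose $X$ into locally closed pieces each contained in one of them, and invoke the equivalence $3. \Leftrightarrow 1.$ of Lemma \ref{local} applied to a resolution $\pi_X \colon \widetilde X \to X$ restricted over each piece — using Remark \ref{independence} to know the property is resolution-independent and that the restriction of $\pi_X$ over an open is a resolution of that open.

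The heart of the argument is therefore: if $\psi \colon U \to V$ is étale and $V$ has $\LL$-rational singularities, then $U$ has $\LL$-rational singularities. Here I would pick a resolution $\pi_V \colon \widetilde V \to V$ with $\LL$-rational fibers, form the fiber product $\widetilde U \defeq \widetilde V \times_V U$, and note that the base-changed map $\pi_U \colon \widetilde U \to U$ is proper and birational, while $\widetilde U$ is smooth because étale morphisms preserve smoothness and $\widetilde U \to \widetilde V$ is étale (base change of $\psi$). One subtlety is that $\pi_U$ must be an isomorphism away from the singular locus of $U$: since $\psi$ is étale, the singular locus of $U$ is exactly $\psi^{-1}(\text{singular locus of }V)$, and $\pi_V$ is an isomorphism over the smooth locus of $V$, so $\pi_U$ is an isomorphism over the smooth locus of $U$. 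Thus $\pi_U$ is a genuine resolution of singularities of $U$. It remains to check $\pi_U$ has $\LL$-rational fibers: for a point $u \in U$ with image $v = \psi(u) \in V$, the fiber $\pi_U^{-1}(u)$ is $\pi_V^{-1}(v) \times_{\kk(v)} \Spec \kk(u)$ (compatibility of fibers with base change), so its class in $K_0(\cV_{\kk(u)})/(\LL)$ is the image of $[\pi_V^{-1}(v)] = 1$ under the base-change ring homomorphism $K_0(\cV_{\kk(v)})/(\LL) \to K_0(\cV_{\kk(u)})/(\LL)$, hence equals $1$. Therefore $\pi_U$ has $\LL$-rational fibers and $U$ has $\LL$-rational singularities.

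The main obstacle I anticipate is bookkeeping at the level of residue fields and base change: one must be careful that $\kk(u)$ is a finite separable extension of $\kk(v)$ (which it is, by étaleness) and that the base-change map on Grothendieck rings is well-defined and sends $\LL_{\kk(v)}$ to $\LL_{\kk(u)}$ so that it descends to the quotients mod $\LL$ — this is routine but needs to be stated. A secondary point requiring care is globalizing from the étale-local resolutions $\widetilde U_x$ to a conclusion about $X$: rather than attempting to glue the $\widetilde U_x$ (which need not agree on overlaps), I would instead just use that "$X$ has $\LL$-rational singularities" is equivalent, via Lemma \ref{local} part $3.$, to the existence of a locally closed decomposition over whose pieces a fixed resolution $\pi_X$ has $\LL$-rational fibers; refining the cover $\{U_x\}$ to such a decomposition and applying the already-established local case on each piece (with $\pi_X|$ and $\pi_{U_x}$ related by Remark \ref{independence}) then finishes the proof.
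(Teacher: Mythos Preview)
Your proposal is correct and follows essentially the same approach as the paper: both first show that each $U_x$ has $\LL$-rational singularities by pulling back a resolution of $V_x$ along the \'etale map $\psi$ (the paper cites \cite[Lemma 2.9]{ES} for closure of $\LL$-rational fibers under base change, which you unpack explicitly), and then globalize by fixing a single resolution $\pi_X \colon \widetilde{X} \to X$, restricting it to each $U_x$, and invoking Remark~\ref{independence}. The only cosmetic difference is that the paper concludes directly from the pointwise Definition~\ref{L_rat_fiber} using $X = \bigcup_{x} U_x$, whereas you route through a finite subcover and Lemma~\ref{local} part~3; both are fine.
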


\begin{proof}

Let $\pi: \widetilde{V}_x \to V_x$ and $\rho: \widetilde{X} \to X$ be resolutions of singularities, and let $\widetilde{U}_x = U_x \times_{V_x} \widetilde{V}_x$. Then $\widetilde{\pi} \defeq \psi^{\ast} (\pi): \widetilde{U}_x \to U_x$ is proper and birational as an \'etale pullback of $\pi$, and $\widetilde{U}_x$ is smooth, so $\widetilde{\pi}$ is also a resolution of singularities. Since the class of morphisms with $\LL$-rational fibers is closed under base change (\mbox{\cite[Lemma 2.9]{ES}}), we conclude that $U_x$ has $\LL$-rational singularities.

Now let $j: U_x \hookrightarrow X$ and $\widetilde{U}'_x = U_x \times_X \widetilde{X}$. Analogously, $\widetilde{\rho} \defeq j^{\ast} (\rho): \widetilde{U}'_x \to U_x$ is a resolution of singularities, so $\widetilde{\rho}$ has $\LL$-rational fibers. The lemma now follows because $X = \bigcup_{x \in X_{ \cl}} U_x$.
\end{proof}

Before proceeding further, let us recall the definition of a $\lambda$-ring following \cite[Sec. 4]{Gr}.

\begin{definition} \label{subsection:lambda_rings}

A $\lambda$-structure on a ring $A$ is a sequence of maps $(\lambda^i: A \to A)_{i \ge 0}$ (called $\lambda$-operations) such that
\begin{equation} \label{eq:lambda}
\aligned
\lambda^0(x) & = 1, \\
\lambda^1(x) & = x, \\
\lambda^n(x+y) &= \sum_{i+j = n} \lambda^i(x)\lambda^j(y) \quad \text{for } n \geqslant 0.
\endaligned
\end{equation}

A $\lambda$-ring is a ring with a $\lambda$-structure. A $\lambda$-homomorphism is a homomorphism of rings commuting with all $\lambda$-operations. It is clear that $(\lambda^i: A \to A)_{i \ge 0}$  define a $\lambda$-structure on $A$ if and only if the map
\[
\lambda_t: (A, \, +) \to (1+ tA[[t]], \, \times ), \quad a \mapsto \sum_{n \geqslant 0} \lambda^n(a)t^n
\]
is a homomorphism of groups.

For every ring $A$, one can equip $1+ tA[[t]]$ with a canonical $\lambda$-structure. If $A$ is in addition a $\lambda$-ring, then one says that $A$ is a \textit{special} $\lambda$-ring if $\lambda_t$ is a $\lambda$-homomorphism.


An ideal $I \subset A$ of a $\lambda$-ring $A$ is called a $\lambda$-ideal if $\lambda^i(t) \in I$ for all $t \in I$ and all $i \ge 1$. For a $\lambda$-ideal $I \subset A$, the $\lambda$-structure on $A$ canonically descends to $A/I$, making it a $\lambda$-ring.

\end{definition}

\begin{proposition} \label{sym_have_rational_fibers}

\begin{enumerate}

\item[(a)] Assume that $f: X \to S$ and $f': X' \to S'$ are morphisms of $K$-varieties which have $\LL$-rational fibers. Then the product morphism $f \times f': X \times X' \to S \times S'$ has $\LL$-rational fibers.

\item[(b)] Assume that $f: X \to S$ has $\LL$-rational fibers. Then the $n$-th symmetric power map $\Sym^n(f): \Sym^n(X) \to \Sym^n(S)$ has $\LL$-rational fibers for every $n \ge 0$.

\end{enumerate}

\end{proposition}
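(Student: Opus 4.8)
The plan is to prove (a) first and then bootstrap to (b) by induction on $n$, the geometric engine being the decomposition of a symmetric power induced by a decomposition of its base.

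For (a) I would use the bilinear ``exterior product'' pairing $K_0(\cV_S) \times K_0(\cV_{S'}) \to K_0(\cV_{S \times_K S'})$, $([A]_S,[B]_{S'}) \mapsto [A \times_K B]_{S \times_K S'}$. It is well defined because a closed subscheme of $A$ spreads to a closed subscheme of $A \times_K B$ (and similarly in the other slot), so the scissor relations are respected; and it sends $(\LL_S,1)$ and $(1,\LL_{S'})$ to $\LL_{S\times S'}$, hence descends to a pairing of the quotients modulo $\LL$ carrying $(1,1)=([S]_S,[S']_{S'})$ to $[S\times_K S']_{S\times S'}=1$. Now apply Lemma~\ref{local}: choose decompositions $S=\bigsqcup_i W_i$, $S'=\bigsqcup_j W'_j$ with $[f^{-1}(W_i)]_{W_i}=1$ and $[f'^{-1}(W'_j)]_{W'_j}=1$. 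Then $S\times_K S'=\bigsqcup_{i,j}W_i\times_K W'_j$ is a decomposition into locally closed subsets, $(f\times f')^{-1}(W_i\times W'_j)=f^{-1}(W_i)\times_K f'^{-1}(W'_j)$, and the class of this in $K_0(\cV_{W_i\times W'_j})/(\LL)$ is the image of $(1,1)$ under the descended pairing, namely $1$. Since $f\times f'$ is proper, Lemma~\ref{local} shows it has $\LL$-rational fibers.

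For (b) I induct on $n$ (the cases $n=0,1$ are trivial). Fix $f\colon X\to S$ with $\LL$-rational fibers and, using Lemma~\ref{local}, choose a decomposition $S=\bigsqcup_{i=1}^r W_i$ into locally closed subsets with $[f^{-1}(W_i)]_{W_i}=1$, ordered (after refining) so that $W_1\cup\dots\cup W_j$ is closed in $S$ for each $j$. Iterating the elementary fact that for an open--closed decomposition $S=U\sqcup Z$ one has, as a decomposition into locally closed subvarieties,
\[
\Sym^n(S)=\bigsqcup_{a+b=n}\bigl(\Sym^a(U)\times_K\Sym^b(Z)\bigr),
\]
over whose $(a,b)$-stratum $\Sym^n(f)$ restricts to $\Sym^a(f|_{f^{-1}U})\times_K\Sym^b(f|_{f^{-1}Z})$ (a cycle on $X=f^{-1}U\sqcup f^{-1}Z$ lying over $\xi+\zeta$ with $\xi$ supported on $U$ and $\zeta$ on $Z$ splits uniquely according to $U$ and $Z$), one gets $\Sym^n(S)=\bigsqcup_{n_1+\dots+n_r=n}\prod_i\Sym^{n_i}(W_i)$, with $\Sym^n(f)$ restricting over the stratum $\mathbf n=(n_1,\dots,n_r)$ to $\prod_i\Sym^{n_i}(f|_{f^{-1}(W_i)})$. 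For strata with all $n_i<n$ each factor has $\LL$-rational fibers by the inductive hypothesis, hence so does the product by part (a). The remaining ``pure'' strata $\mathbf n=(0,\dots,n,\dots,0)$ give back $\Sym^n(f|_{f^{-1}(W_i)})$ with $[f^{-1}(W_i)]_{W_i}=1$; so by Lemma~\ref{local}(3) it suffices to treat the case $[X]_S=1$ in $K_0(\cV_S)/(\LL)$, which I would do pointwise. Let $\sigma\in\Sym^n(S)$ with residue field $\kappa$; base-changing everything along $\Spec\kappa\to\Spec K$ (harmless, since $\LL$-rational fibers are stable under base change), assume $\sigma$ is $K$-rational. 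Then $\sigma$ corresponds to a reduced finite closed subscheme $Z=\bigsqcup_l\Spec L_l\hookrightarrow S$ with multiplicities $m_l\ge 1$, $\sum_l m_l[L_l:K]=n$, and Galois descent identifies $\Sym^n(f)^{-1}(\sigma)\cong\prod_l\Res_{L_l/K}\bigl(\Sym^{m_l}_{L_l}(f^{-1}(\zeta_l))\bigr)$, where $\zeta_l\in S$ is the point with residue field $L_l$; note $[f^{-1}(\zeta_l)]=1$ in $K_0(\cV_{L_l})/(\LL)$. If the support has more than one point or some $[L_l:K]>1$ then every $m_l<n$, so $[\Sym^{m_l}_{L_l}(f^{-1}(\zeta_l))]=1$ by induction; the one new situation is $Z=\Spec L$ with $m[L:K]=n$, where one needs: (i) for any variety $V$ over a field $F$ of characteristic $0$ with $[V]=1$ in $K_0(\cV_F)/(\LL)$ one has $[\Sym^m_F(V)]=1$; and (ii) if moreover $L/F$ is finite, then $[\Res_{L/F}(V)]=1$ in $K_0(\cV_F)/(\LL)$. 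Granting these, each factor is $1$, hence so is the product, and Corollary~\ref{classes_coincide} with Lemma~\ref{local} concludes.

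The real work is in (i) (and the closely related (ii)), which I expect is the main obstacle and is presumably what the paper isolates as a separate lemma. For (i) I would invoke the Larsen--Lunts isomorphism $K_0(\cV_F)/(\LL)\cong\ZZ[\SB/F]$ from Theorem~\ref{Lar_Lun}: by Bittner's presentation write $[V]=\sum_k d_k[Y_k]$ with $Y_k$ smooth projective connected over $F$; the hypothesis forces $\sum_{\langle Y_k\rangle=Q}d_k=\delta_{Q,\langle\Spec F\rangle}$ for every stable birational class $Q$. Then $\lambda_t([V])=\prod_k Z(Y_k,t)^{d_k}$, and modulo $\LL$, using that $\langle\Sym^m(Y)\rangle$ depends only on $\langle Y\rangle$, the product telescopes to $Z(\Spec F,t)=\tfrac1{1-t}$ modulo $\LL$, i.e.\ $[\Sym^m_F(V)]=1$ for all $m$. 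That $\Sym^m$ respects stable birational equivalence in turn reduces to $\Sym^m(Y\times\PP^a)$ being birational to $\Sym^m(Y)\times\PP^N$, which holds because $\Sym^m(Y\times\PP^a)\to\Sym^m(Y)$ has rational generic fibre: over the generic point of $\Sym^m(Y)$ the fibre is a finite product of Weil restrictions $\Res_{L'/F'}(\PP^a_{L'})$, each rational because it contains $\Res_{L'/F'}(\AA^a_{L'})=\AA^{a[L':F']}_{F'}$ as a dense open. Statement (ii) is proved along the same lines, combined with the ``plethystic'' decomposition of $\Res_{L/F}$ with respect to the scissor relations (so that $\Res$ of a disjoint union is a disjoint union of mixed Weil restrictions of products of the pieces, over subfields of the Galois closure) together with the fact that Weil restriction preserves stable rationality; bookkeeping the field extensions that occur in these formulas is the most technical point of the whole argument.
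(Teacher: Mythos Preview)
Your treatment of (a) and the stratification of $\Sym^n(S)$ induced by a decomposition $S=\bigsqcup_i W_i$ match the paper. The gap is in your proof of claim (i), which is circular.

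You write $[V]=\sum_k d_k[Y_k]$ with $Y_k$ smooth projective, and then want to telescope $\prod_k Z(Y_k,t)^{d_k}$ modulo $\LL$ by grouping the $Y_k$ into stable birational classes. For this you need $[\Sym^m(Y_k)]\equiv[\Sym^m(Y_{k'})]\pmod\LL$ whenever $\langle Y_k\rangle=\langle Y_{k'}\rangle$. You invoke the geometric fact that $\langle\Sym^m(Y)\rangle$ depends only on $\langle Y\rangle$, but that is not what is at stake: under the isomorphism of Theorem~\ref{Lar_Lun}, the image of $[\Sym^m(Y_k)]$ in $\ZZ[\SB/F]$ equals $\langle\Sym^m(Y_k)\rangle$ only once one knows $[\Sym^m(Y_k)]=[\widetilde{\Sym}^m(Y_k)]$ in $K_0(\cV_F)/(\LL)$. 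That identity is Corollary~\ref{class_of_resolution_coincide}, whose proof goes through Theorem~\ref{resolution} and hence through the very proposition you are proving. Your sketch of (ii) inherits the same circularity once unwound.

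The paper avoids this by never passing through $\ZZ[\SB]$. It works in the graded ring $R_S=\bigoplus_{i\ge 0} K_0(\cV_{\Sym^i(S)})$ and shows directly (Lemma~\ref{lambda_structure_on_R}) that the homogeneous ideal $(\LL_R)$ is a $\lambda$-ideal: for this it suffices that $[\Sym^n(X\times\AA^1)]\in(\LL)$ for all $n\ge 1$, which follows from G\"ottsche's observation that the projection $\Sym^n(X\times\AA^1)\to\Sym^n(X)$ is a piecewise trivial $\AA^n$-fibration. Once the $\lambda$-operations descend, $[X]=[S]$ in $K_0(\cV_S)/(\LL_S)$ gives $[\Sym^n(X)]=\lambda^n([X])=\lambda^n([S])=[\Sym^n(S)]=1$ in $K_0(\cV_{\Sym^n(S)})/(\LL)$ (Corollary~\ref{unit_is_preserved}), which is exactly your (i) in relative form and feeds straight back into Lemma~\ref{local}. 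This relative formulation also makes the pointwise base-change and the Weil-restriction bookkeeping of your (ii) unnecessary.
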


As we will see below, the only non-trivial part of this proposition is $(b)$. To prove this, let us introduce the graded ring
\[
R_S \defeq \bigoplus_{i \ge 0} K_0(\cV_{\Sym^i(S)}).
\]
For classes $[X] \in K_0(\cV_{\Sym^i(S)})$, $[Y] \in K_0(\cV_{\Sym^j(S)})$ one can naturally view their product $[X \times Y]$ as an element in $K_0(\cV_{\Sym^{i+j}(S)})$ via the composition
\[
X \times Y \to \Sym^i(S) \times \Sym^j(S) \to \Sym^{i+j}(S).
\]
Thus, extended by linearity, it gives a well-defined graded ring structure on $R_S$.

Consider a homogeneous ideal
\[
(\LL_{R}) \defeq \bigoplus_{i \ge 0} (\LL_{\Sym^{i}(S)}) \subset \bigoplus_{i \ge 0} K_0(\cV_{\Sym^i(S)}).
\]

\begin{lemma} \label{lambda_structure_on_R}

The ring $R_S$ admits a unique $\lambda$-structure such that for any variety $Y$ over $\Sym^{i}(S)$ one has
\begin{equation} \label{lambda_strucure}
\lambda^n([Y]) = [\Sym^{n}(Y)] \in K_0(\cV_{\Sym^{in}(S)}) \subset R_S,
\end{equation}
where $\Sym^{n}(Y)$ is viewed as a variety over $\Sym^{in}(S)$ via the composition
\[
\Sym^n(Y) \to \Sym^n(\Sym^{i}(S)) \to \Sym^{in}(S).
\]
Moreover, $(\LL_R)$ is a $\lambda$-ideal (thus, this $\lambda$-structure descends to a $\lambda$-structure on $R_S/(\LL_R)$).
    
\end{lemma}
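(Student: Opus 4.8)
The plan is to build the $\lambda$-structure on $R_S$ by hand, using the symmetric-power operations on the relative Grothendieck rings, and then to check the axioms \eqref{eq:lambda} and the fact that $(\LL_R)$ is a $\lambda$-ideal.

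First I would recall that for any $K$-variety $T$, the relative Grothendieck ring $K_0(\cV_T)$ carries symmetric-power operations $\Sym^n$: for an $S$-scheme (here $T$-scheme) $Y$, the variety $\Sym^n(Y) = Y^n/S_n$ is naturally a scheme over $\Sym^n(T)$, and the scissor relations are respected because $\Sym^n$ of a disjoint union decomposes as $\bigsqcup_{a+b=n} \Sym^a(-)\times\Sym^b(-)$ (Macdonald's formula), which is exactly the identity \eqref{eq:lambda}. Concretely, one checks that the map $[Y]\mapsto [\Sym^n(Y)]$ descends to $K_0(\cV_T)$; this is standard (it is the usual $\lambda$-structure of Kapranov, now in the relative setting over $T$). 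Then for $Y$ over $\Sym^i(S)$ one lands $\Sym^n(Y)$ in $K_0(\cV_{\Sym^{in}(S)})$ via the natural map $\Sym^n(\Sym^i(S))\to\Sym^{in}(S)$ induced by iterated multiplication of $0$-cycles, so formula \eqref{lambda_strucure} makes sense and defines maps $\lambda^n$ on each graded piece; extend additively to all of $R_S$. Uniqueness is immediate: the classes of $\Sym^i(S)$-varieties generate $K_0(\cV_{\Sym^i(S)})$ as an abelian group, so \eqref{lambda_strucure} together with additivity of $\lambda^n$ forces the definition.

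The substantive point is the Cauchy-product identity $\lambda^n(x+y) = \sum_{a+b=n}\lambda^a(x)\lambda^b(y)$. By additivity of each $\lambda^n$ it suffices to verify it when $x=[Y]$, $y=[Z]$ are classes of varieties over $\Sym^i(S)$ and $\Sym^j(S)$ respectively; but in fact, since $R_S$ is graded and $\lambda^n$ is defined grading-piece by grading-piece, one must be slightly careful about what "$x+y$" means — the identity is genuinely only being asserted for homogeneous $x,y$ of the same degree, or else reinterpreted so that a class of mixed degree is a formal sum of its homogeneous components. I would phrase the verification as follows: for $[Y],[Z]$ over the same base $\Sym^i(S)$, form $Y\sqcup Z$ over $\Sym^i(S)$; then Macdonald's decomposition gives a $\Sym^i(S)$-isomorphism $\Sym^n(Y\sqcup Z)\cong\bigsqcup_{a+b=n}\Sym^a(Y)\times_{\Sym^i(S)?}\dots$ — and here is the delicate bookkeeping: $\Sym^a(Y)$ is a variety over $\Sym^{ia}(S)$, $\Sym^b(Z)$ over $\Sym^{ib}(S)$, and their external product over $K$ maps to $\Sym^{ia}(S)\times\Sym^{ib}(S)\to\Sym^{i(a+b)}(S)=\Sym^{in}(S)$, which is exactly the product in $R_S$. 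So the isomorphism $\Sym^n(Y\sqcup Z)\cong\bigsqcup_{a+b=n}\Sym^a(Y)\times\Sym^b(Z)$ over $\Sym^{in}(S)$ is precisely the statement $\lambda^n([Y]+[Z]) = \sum_{a+b=n}\lambda^a([Y])\lambda^b([Z])$. The cases $\lambda^0 = 1$ and $\lambda^1 = \mathrm{id}$ are the conventions $\Sym^0(Y)=\Spec K$ (viewed over $\Sym^0(S)=\Spec K$, the unit of $R_S$) and $\Sym^1(Y)=Y$.

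Finally, for the $\lambda$-ideal claim, I would show $\lambda^n(t)\in(\LL_R)$ for $t\in(\LL_R)$ and $n\ge 1$. Again by additivity of $\lambda^n$ and the Cauchy identity, it is enough to treat $t = \LL_{\Sym^i(S)}\cdot[Y] = [\AA^1\times Y]$ for $Y$ over $\Sym^i(S)$: indeed a general element of $(\LL_{\Sym^i(S)})$ is a $\ZZ$-linear combination of such, and the Cauchy identity reduces $\lambda^n$ of a sum to products of $\lambda$'s of summands, with at least one factor of positive degree $\ge 1$ in each term. For $t=[\AA^1\times Y]$ one has $\Sym^n(\AA^1\times Y)$ and I claim its class lies in $(\LL_{\Sym^{in}(S)})$; the cleanest route is the known fact that $\Sym^n(\AA^1\times Y)\to\Sym^n(Y)$ (induced by the projection $\AA^1\times Y\to Y$) is a piecewise trivial fibration with fiber an affine space — more precisely, over the stratum of $\Sym^n(Y)$ where the cycle has a fixed partition type $\lambda\vdash n$, the fiber is $\Sym^{\lambda}(\AA^1) = \prod_k\Sym^{m_k}(\AA^1)\cong\AA^n$ (using $\Sym^m(\AA^1)\cong\AA^m$). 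By Remark \ref{locally_trivial_fibration} (applied over the base $\Sym^{in}(S)$), $[\Sym^n(\AA^1\times Y)]_{\Sym^{in}(S)} = \LL^{n}_{\Sym^{in}(S)}\cdot[\Sym^n(Y)]_{\Sym^{in}(S)} \in (\LL_{\Sym^{in}(S)})$. The main obstacle is getting the stratification and the fiber-identification $\Sym^{m}(\AA^1)\cong\AA^{m}$ to interact correctly with the relative structure over $\Sym^{in}(S)$ — one needs the strata to be defined over the base and the trivializations to be $\Sym^{in}(S)$-morphisms, which follows since the whole construction is pulled back from $\Sym^n$ of the projection $\AA^1\times\Sym^i(S)\to\Sym^i(S)$ composed with $\Sym^n(\Sym^i(S))\to\Sym^{in}(S)$. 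Once this is in place, $(\LL_R)$ is a $\lambda$-ideal, and the descent of the $\lambda$-structure to $R_S/(\LL_R)$ is the general fact recalled in Definition \ref{subsection:lambda_rings}.
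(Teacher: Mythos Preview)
Your proof is correct and follows essentially the same route as the paper: Macdonald's decomposition $\Sym^n(Y\sqcup Z)\cong\bigsqcup_{a+b=n}\Sym^a(Y)\times\Sym^b(Z)$ to check compatibility with the scissor relations, and the piecewise trivial $\AA^n$-fibration $\Sym^n(\AA^1\times Y)\to\Sym^n(Y)$ (which the paper cites as \cite[Lemma~4.4]{Got}) together with Remark~\ref{locally_trivial_fibration} for the $\lambda$-ideal claim. One terminological correction: the extension from generators to all of $R_S$ is via the Cauchy identity (equivalently, by declaring $\lambda_t$ a group homomorphism $(R_S,+)\to(1+tR_S[[t]],\times)$), not ``additively'' --- the maps $\lambda^n$ are not additive, and an additive extension across graded pieces would violate \eqref{eq:lambda} for mixed-degree inputs; your own later reductions to single generators already use the correct multiplicative extension.
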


\begin{proof}

It is straightforward to check that there are unique maps
\[
\lambda^n: \bigoplus_{i \ge 0} \ZZ[\cV_{\Sym^i(S)}] \to \bigoplus_{i \ge 0} K_0(\cV_{\Sym^i(S)})
\]
satisfying (\ref{eq:lambda}) and such that for a variety $Y$ over $\Sym^i(S)$ one has $\lambda^n(Y) = [\Sym^n(Y)] \in K_0(\cV_{\Sym^{in}(S)})$. To see that the maps $\lambda^n$ factor through $R_S$, it is enough to note that if $X$ is a variety over $\Sym^k(S)$, $Z \subset X$ is a closed subvariety  and $U = X \, \backslash \, Z$, then there exists a decomposition
\[
\Sym^{n}(X) = \bigsqcup_{i=0}^{n} (\Sym^{i}(U) \times \Sym^{n-i}(Z))
\]
into locally closed subvarieties and it respects the structure maps to $\Sym^{kn}(S)$.

Finally, to prove that $(\LL_R)$ is a $\lambda$-ideal, it is enough to show that $\lambda^n(x) \in (\LL_R)$ for all $n \ge 1$ and all additive generators $x$ of $(\LL_R)$. Thus, we need to prove that for any variety $X$ over $\Sym^i(S)$ one has
\[
[\Sym^n(X \times \AA^1_K)] \subset (\LL_{\Sym^{in}(S)}).
\]
It follows from the proof of \cite[Lemma 4.4]{Got} that for every $n \ge 0$ the natural projection map
\[
p_1: \Sym^{n}( X \times \AA^1_K) \to \Sym^{n}(X)
\]
is a piecewise trivial fibration over $\Sym^n(X)$ with fiber $\AA^n$. Therefore 
\[
[\Sym^{n}( X \times \AA^1_K)] = [\Sym^{n}(X)] \cdot \LL^n_{\Sym^n(X)} 
\]
in $K_0(\cV_{\Sym^n(X)})$ by Remark \ref{locally_trivial_fibration}. Since $X$ was over $\Sym^i(S)$, we deduce that this equality also holds in $K_0(\cV_{\Sym^{in}(S)})$.

\end{proof}

\begin{corollary}
    
\label{unit_is_preserved}

\begin{enumerate}

\item[(a)] Let $S$ and $S'$ be varieties over $K$, and let $X$ and $X'$ be varieties over $S$ and $S'$ respectively such that $[X] = 1 $ in $K_0(\cV_{S})/(\LL_{S})$ and $[X'] = 1$ in $K_0(\cV_{S'})/(\LL_{S'})$. Then $[X \times X'] = 1$ in $K_0(\cV_{S \times S'})/(\LL_{S \times S'})$.

\item[(b)] Let $S$ be a variety over $K$, and let $X$ be a variety over $S$ such that $[X] = 1$ in $K_0(\cV_{S})/(\LL_{S})$. Then $[\Sym^{n}(X)] = 1$ in $K_0(\cV_{\Sym^n(S)})/(\LL_{\Sym^n(S)})$\footnote{Here, if $f: X \to S$ is a structure map, then we assume that $\Sym^n(X)$ is naturally a variety over $\Sym^n(S)$ via the map $\Sym^n(f): \Sym^n(X) \to \Sym^n(S)$.}.

\end{enumerate}
    
\end{corollary}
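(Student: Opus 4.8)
The plan is to derive both parts from formal properties of relative Grothendieck rings: part (a) from the external product, and part (b) from the $\lambda$-structure on $R_S/(\LL_R)$ provided by Lemma \ref{lambda_structure_on_R}.

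For (a), the key observation is that the external product
\[
\boxtimes : K_0(\cV_S) \times K_0(\cV_{S'}) \longrightarrow K_0(\cV_{S \times_K S'}), \qquad ([Y]_S, [Y']_{S'}) \longmapsto [Y \times_K Y']_{S \times_K S'},
\]
coincides with the product, computed inside $K_0(\cV_{S \times S'})$, of the pullbacks of $[Y]_S$ and $[Y']_{S'}$ along the projections $S \times S' \to S$ and $S \times S' \to S'$. Pullback along a morphism of bases is a ring homomorphism carrying $\LL_S$ (resp.\ $\LL_{S'}$) to $\LL_{S \times S'}$, so each of these pullbacks descends to the quotient by the relevant $\LL$-ideal; hence $\boxtimes$ descends to a biadditive pairing
\[
K_0(\cV_S)/(\LL_S) \times K_0(\cV_{S'})/(\LL_{S'}) \longrightarrow K_0(\cV_{S \times S'})/(\LL_{S \times S'}).
\]
Since $\boxtimes$ takes the pair of unit classes $([S]_S, [S']_{S'})$ to the unit class $[S \times S']_{S \times S'}$, substituting the hypotheses $[X]_S = 1$ and $[X']_{S'} = 1$ yields $[X \times X']_{S \times S'} = 1$ in $K_0(\cV_{S \times S'})/(\LL_{S \times S'})$. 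The only things to check are that $\times_K$ respects the scissor relations, so that $\boxtimes$ is well defined and biadditive, and the asserted description of $\boxtimes$ as the product of the two pullbacks; both are immediate from the definitions.

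For (b), I would pass to the graded $\lambda$-ring $R_S/(\LL_R)$ of Lemma \ref{lambda_structure_on_R}. Since $(\LL_R) = \bigoplus_{i \ge 0} (\LL_{\Sym^i(S)})$ is a homogeneous ideal, the degree-$i$ graded piece of $R_S/(\LL_R)$ is $K_0(\cV_{\Sym^i(S)})/(\LL_{\Sym^i(S)})$, and the hypothesis $[X]_S = 1$ asserts precisely that the images of $[X]_S$ and of $1 = [S]_S$ agree in the degree-$1$ piece, hence agree in $R_S/(\LL_R)$. Applying the operation $\lambda^n$ --- which descends to $R_S/(\LL_R)$ because $(\LL_R)$ is a $\lambda$-ideal --- to this equality, and invoking the defining formula (\ref{lambda_strucure}), we obtain
\[
[\Sym^n(X)] = \lambda^n([X]_S) = \lambda^n([S]_S) = [\Sym^n(S)] \quad \text{in } R_S/(\LL_R).
\]
Both sides are homogeneous of degree $n$, so this equality already holds in $K_0(\cV_{\Sym^n(S)})/(\LL_{\Sym^n(S)})$; and there $[\Sym^n(S)]_{\Sym^n(S)} = 1$, since for $Y = S$ the structure map appearing in (\ref{lambda_strucure}) is the composite $\Sym^n(S) \to \Sym^n(\Sym^1(S)) \to \Sym^n(S)$ of canonical identifications, i.e.\ the identity. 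This is exactly the assertion of (b).

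The only point that requires care --- rather than a genuine obstacle --- is the bookkeeping of the two different ``units'': the multiplicative identity of the ring $R_S$ is $[\Sym^0(S)]$, which lies in degree $0$, whereas the $1$ occurring in the statement is the identity $[S]_S$ of the degree-$1$ component, and one must confirm (as above) that $\lambda^n$ carries it to the identity of the degree-$n$ component. Once the machinery of Lemma \ref{lambda_structure_on_R} is in place everything else is routine; the substantive work has already been done there. Alternatively, both parts admit a uniform proof using only base-change and pushforward homomorphisms between relative Grothendieck rings together with the observation that each such homomorphism descends modulo the $\LL$-ideals --- for instance, for (a) one base-changes $[X']_{S'} = 1$ along $X \times S' \to S'$, pushes forward along $X \times S' \to S \times S'$, and base-changes $[X]_S = 1$ along $S \times S' \to S$ --- but the argument above is shorter.
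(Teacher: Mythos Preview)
Your proof is correct and follows essentially the same route as the paper: for (a) you use the external product (the paper phrases it as a ring homomorphism from the tensor product $K_0(\cV_S)/(\LL_S) \otimes K_0(\cV_{S'})/(\LL_{S'}) \to K_0(\cV_{S\times S'})/(\LL_{S\times S'})$, which is the same content), and for (b) you apply $\lambda^n$ inside $R_S/(\LL_R)$ exactly as the paper does. Your version is in fact more explicit about the one point the paper leaves implicit, namely that $\lambda^n$ sends the degree-$1$ unit $[S]_S$ to the degree-$n$ unit $[\Sym^n(S)]_{\Sym^n(S)}$.
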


\begin{proof}

For part (a), note that there is a natural homomorphism of rings:
\[
p_{S \times S'}: K_0(\cV_{S})/(\LL_{S}) \otimes_K K_0(\cV_{S'})/(\LL_{S'}) \to K_0(\cV_{S \times S'})/(\LL_{S \times S'}), \quad [X] \otimes [X'] \to [X \times X'].
\]
In particular, $p_{S \times S'}(1 \otimes 1) = 1$, so part (a) follows.

Part (b) follows immediately from Lemma \ref{lambda_structure_on_R}. Indeed, we have
\[
R_S / (\LL_R) \cong \bigoplus_{i \ge 0} K_0(\cV_{\Sym^i(S)})/(\LL_{\Sym^i(S)}),
\]
and $\lambda^n$ sends $K_0(\cV_{S})/(\LL_{S})$ to $K_0(\cV_{\Sym^n(S)})/(\LL_{\Sym^n(S)})$.

\end{proof}

\begin{proof}[Proof of Proposition \ref{sym_have_rational_fibers}]

We will prove part $(b)$. The reader can check that the proof of part $(a)$ is completely analogous.

By Lemma \ref{local}, there is a decomposition $S = \bigsqcup_{i = 1}^{r} W_i$ of $S$ into locally closed subsets such that $[f^{-1}(W_i)] = 1 $ in $K_0(\cV_{W_i})/(\LL_{W_i})$ for every $i$. It gives a decomposition
\[
\Sym^n(S) = \bigsqcup_{i_1+ \ldots i_r = n} \Sym^{i_1}(W_1) \times \cdots \times \Sym^{i_r}(W_r),
\]
where each $\Sym^{i_1}(W_1) \times \cdots \times \Sym^{i_r}(W_r)$ is again locally closed in $\Sym^{n}(S)$. We have
\[
[(\Sym^n(f))^{-1}(\Sym^{i_1}(W_1) \times \cdots \times \Sym^{i_r}(W_r))] = 
[\Sym^{i_1}(f^{-1}(W_1)) \times \cdots \times \Sym^{i_r}(f^{-1}(W_r))]
\]
and by Corollary \ref{unit_is_preserved} this class is equal to $1$ in $K_0(\cV_{\Sym^{i_1}(W_1) \times \cdots \times \Sym^{i_r}(W_r)}) / (\LL)$. By Lemma \ref{local} again, we conclude that $\Sym^n(f)$ has $\LL$-rational fibers.

\end{proof}

Before proving Theorem \ref{resolution}, let us fix some notation. Recall that it follows from \cite[Theorem 3.36]{Kollar2} that over a field of characteristic zero there exists a functorial resolution of singularities, i. e. a functor $F: X \mapsto (\pi_X: \widetilde{X} \to X)$ from the category of varieties over $K$ and \textit{smooth} morphisms to the category of smooth varieties and smooth morphisms such that $\pi_X: \widetilde{X} \to X$ is a resolution of singularities, and for any smooth $f: X \to Y$ the canonical map $\widetilde{X} \to X \times_{Y} \widetilde{Y}$ is an isomorphism. From now on in this section we assume that we have fixed a functorial resolution of singularities.

\begin{notation} \label{decompositon_of_sym}
    
We will use the following natural decomposition of symmetric powers into locally closed subsets. Let $(X^{r})^{\circ}$ be the complement in $X^r$ of the union of the big diagonals in $X^r$. Given positive integers $d_1 \le d_2 \le \cdots \le d_r$ such that $d_1 + \cdots + d_r = n$, there is a locally closed embedding $(X^{r})^{\circ} \hookrightarrow X^n$ given by $\Delta_{d_1} \times \ldots \times \Delta_{d_r}$, where $\Delta_{i}: X \to X^{i}$ are the diagonal embeddings. Let $\lambda$ be the partition of $n$ given by $d_1, \ldots, d_r$. We denote by $S_{\lambda}(X)$ (or simply $S_{\lambda}$ when $X$ is understood) the image of $(X^{r})^{\circ}$  in $X^{n}$, and by $X_{\lambda}$ the image of $S_{\lambda}(X)$ in $\Sym^n(X)$ under the natural projection $\pi: X^n \to \Sym^n(X)$. It is easily seen that the $X_{\lambda}$ give a partition of $\Sym^n(X)$ into locally closed subvarieties.

We will write $\Delta^n_X \defeq X_{(n^1)} \subset \Sym^n(X)$ for the small diagonal in $\Sym^n(X)$. We also denote by $\widetilde{\Sym}^n(X)$ a functorial resolution of singularities of the symmetric power $\Sym^n(X)$. 
\end{notation}

We deduce Theorem \ref{resolution} from the following main proposition:
 
\begin{proposition} \label{main_proposition}

For any $n, k \in \ZZ_{>0}$ the symmetric powers of affine spaces $\Sym^n(\AA^k)$ have $\LL$-rational singularities.
    
\end{proposition}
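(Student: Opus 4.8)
The plan is to induct on $n$, using the $\lambda$-ring structure on $R_S/(\LL_R)$ from Lemma \ref{lambda_structure_on_R} to organize the induction, and reducing every step to statements about $\LL$-rational fibers. The base case $n=1$ is trivial since $\Sym^1(\AA^k) = \AA^k$ is smooth. For the inductive step, I would use the decomposition of $\Sym^n(\AA^k)$ from Notation \ref{decompositon_of_sym} into the locally closed strata $(\AA^k)_\lambda$ indexed by partitions $\lambda$ of $n$. By Lemma \ref{etale_resolution} and Lemma \ref{local}, to show $\Sym^n(\AA^k)$ has $\LL$-rational singularities it suffices to produce, in an \'etale neighborhood of each closed point, a variety with $\LL$-rational singularities. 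The key geometric input is the standard local structure of symmetric powers: near a point of $\Sym^n(\AA^k)$ lying in the stratum indexed by a partition $\lambda = (d_1,\dots,d_r)$ (with $d_i$ occurring $m_i$ times), the symmetric power is \'etale-locally isomorphic to a product
\[
\prod_i \Sym^{m_i}\!\bigl(\Sym^{d_i}(\AA^k)\bigr),
\]
where the factor with $d_i = 1$ contributes only a smooth factor. This is the analytic (or \'etale) local model coming from the fact that $S_n$ acting near a point with stabilizer $\prod_i (S_{d_i} \wr S_{m_i})$ decomposes the tangent space accordingly.

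Granting this local model, the strata with $d_1 = \cdots = d_r = 1$ (i.e. the big open stratum where $\Sym^n$ is smooth) are harmless, and every other point has $d_i \ge 2$ for some $i$, hence lies \'etale-locally in a product each of whose factors is of the form $\Sym^m(\Sym^d(\AA^k))$ with $d + m \le n$ — in fact with $dm \le n$ — so that, crucially, $\Sym^d(\AA^k)$ already has $\LL$-rational singularities by the inductive hypothesis (if $d < n$) and the "outer" symmetric power $\Sym^m$ is then handled by the following: if a variety $V$ has $\LL$-rational singularities then so does $\Sym^m(V)$ for $m \le n$ — but this is exactly Theorem \ref{resolution} for $V$, which is what we are trying to prove. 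To avoid circularity I would instead set up the induction to prove the combined statement "$\Sym^n(\AA^k)$ has $\LL$-rational singularities \emph{and} $[\widetilde{\Sym}^n(\AA^k)] = [\Sym^n(\AA^k)]$ in the relevant Grothendieck ring over a base" simultaneously for all $k$, peeling off one layer at a time: the outer $\Sym^{m_i}$ with $m_i < n$ applied to $\Sym^{d_i}(\AA^k)$ is handled by a \emph{relative} version. Concretely, I would prove by induction on $n$ the statement: for every base variety $S$ and every $j \ge 1$, and every $d,k$ with $\Sym^d(\AA^k)$ having $\LL$-rational singularities, the morphism $\widetilde{\Sym}^n(Y) \to \Sym^n(Y)$ has $\LL$-rational fibers whenever $Y \to S$ has $\LL$-rational fibers with $Y$ built from affine spaces — using Proposition \ref{sym_have_rational_fibers}(b) to push $\LL$-rational fibers through symmetric powers and Lemma \ref{lambda_structure_on_R} to manage the grading.

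Here is the cleaner way I would actually structure it. Fix $n$ and assume $\Sym^{n'}(\AA^{k'})$ has $\LL$-rational singularities for all $n' < n$ and all $k'$. For a point $x \in \Sym^n(\AA^k)$ in the stratum $\lambda$, the \'etale-local model is $\prod_i \Sym^{m_i}(\Sym^{d_i}(\AA^k))$; if $\lambda \ne (1^n)$ then every factor has $d_i m_i \le n$ with at least one factor having $d_i \ge 2$, hence $m_i \le n/2 < n$ for that factor, and the factor $\Sym^{m_i}(\Sym^{d_i}(\AA^k))$ can be analyzed by re-running the same stratification argument one dimension down, since $\Sym^{d_i}(\AA^k)$ has $\LL$-rational singularities by the outer inductive hypothesis (as $d_i \le n$, and if $d_i = n$ then $m_i = 1$ and there is nothing to resolve). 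The factors with $d_i = 1$ are smooth and contribute a trivial smooth factor to the \'etale neighborhood, which does not affect the $\LL$-rational singularities property by Lemma \ref{etale_resolution} applied after taking products (using that a product of a variety with $\LL$-rational singularities and a smooth variety again has $\LL$-rational singularities, via functoriality of the fixed resolution and Remark \ref{locally_trivial_fibration}). I expect the main obstacle to be precisely the bookkeeping needed to make the induction genuinely decrease a well-founded quantity: one must check that the "outer" symmetric power $\Sym^{m_i}$, even though $m_i < n$, is applied to a \emph{singular} variety $\Sym^{d_i}(\AA^k)$ rather than to an affine space, so a single induction on $n$ with the statement only about $\Sym^n(\AA^k)$ is not literally enough — the statement must be strengthened to an assertion about $\Sym^n$ of any variety with $\LL$-rational singularities that is \'etale-locally a product of symmetric powers of affine spaces, or equivalently one proves Theorem \ref{resolution} and Proposition \ref{main_proposition} together by a simultaneous induction on $n$. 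The Grothendieck-ring lemma (Lemma \ref{lambda_structure_on_R}) and Corollary \ref{unit_is_preserved}(b) are exactly what make this simultaneous induction go through, since they let one pass the identity "$[\widetilde{\Sym}^n Y] = [\Sym^n Y]$ over the base $\Sym^n S$" through the $\lambda$-operations without leaving the category of varieties over symmetric powers of the base.
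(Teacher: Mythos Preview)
Your induction does not close at the deepest stratum, and this is a genuine gap, not a bookkeeping issue. When $\lambda = (n^1)$ is the small diagonal, the \'etale-local model of $\Sym^n(\AA^k)$ at such a point is $\Sym^1(\Sym^n(\AA^k)) = \Sym^n(\AA^k)$ itself, at its most singular point. Your parenthetical ``if $d_i = n$ then $m_i = 1$ and there is nothing to resolve'' is simply false: the outer $\Sym^1$ is trivial, but the inner $\Sym^n(\AA^k)$ is exactly the variety whose $\LL$-rational singularities you are trying to establish. No simultaneous-induction strengthening helps here, because nothing in the local picture reduces the problem to a strictly smaller $n$.

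The paper supplies a genuinely new global input to break this impasse. First (Step 1) it uses the translation action to show that over $\Delta^n_{\AA^k}$ the functorial resolution $\rho_n$ is a \emph{trivial} fibration with some fiber $F_n$; the same holds over every affine chart of $\Delta^n_{\PP^k}$, so $\rho'_n: \widetilde{\Sym}^n(\PP^k)\to\Sym^n(\PP^k)$ is a piecewise trivial $F_n$-fibration over $\Delta^n_{\PP^k}$. The inductive hypothesis handles all strata of $\Sym^n(\PP^k)$ away from the small diagonal (this is Step 2, and it is essentially your stratification argument, carried out correctly). Combining, one gets
\[
[\widetilde{\Sym}^n(\PP^k)] \equiv [U'_n] + [F_n]\cdot[\Delta^n_{\PP^k}] \equiv [F_n] \pmod{\LL},
\]
using $[\Sym^n(\PP^k)]\equiv[\PP^k]\equiv 1$. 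Now the classical fact that $\Sym^n(\PP^k)$ is rational forces $[\widetilde{\Sym}^n(\PP^k)] = 1$ in $K_0(\cV_K)/(\LL)$, hence $[F_n]=1$. This compactification-plus-rationality trick is the missing idea in your proposal; without it, the fiber class over the small diagonal is undetermined by the local structure alone.
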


\begin{proof}

We will use the notation $\lambda = (1^{l_1}, \ldots, n^{l_n})$ for a partition of $n$ which corresponds to a decomposition $n = d_1 + \cdots + d_r$ such that the first $l_1$ of the $d_i$ are equal to $1$, the next $l_2$ of the $d_i$ are equal to $2$ and so on. Let $S_{\lambda} = S_{\lambda} (\AA^k)$ be as in Notation \ref{decompositon_of_sym}, and let $H_{\lambda} = \{g \in S_n \mid gS_{\lambda} = S_{\lambda} \}$ be the normalizer of $S_{\lambda}$. We have $H_{\lambda} = H_1 \rtimes H_2$, where $H_1 = \prod_{i=1}^r S_{d_i}$ and $H_2 = \prod_{j} S_{l_j}$, where the second product is over all $j \in \{1, \ldots, n \}$ such that $l_j \neq 0$.  Note that since $H_1$ acts trivially on $S_{\lambda}$, we have $\AA^k_{\lambda} \cong S_{\lambda}/H_{\lambda} \cong (S_{\lambda}/H_1)/H_2 \cong S_{\lambda}/H_2$.

Let us fix $k$. We break the proof into three steps.

\vspace{0.4cm}

\textit{Step 1. Let $\rho_n: \widetilde{\Sym}^n(\AA^k) \to \Sym^n(\AA^k)$ be a functorial resolution of singularities. Then  $\rho^{-1}_n(\Delta^n_{\AA^k}) \overset{\rho_n}{\longrightarrow} \Delta^n_{\AA^k}$ is a trivial fibration, i. e. there exists an isomorphism $\rho^{-1}_n(\Delta^n_{\AA^k}) \overset{\sim}{\to} \Delta^n_{\AA^k} \times F_n$ of varieties over $\Delta^n_{\AA^k}$ for some variety $F_n$}\footnote{In fact, a similar argument (with slight modifications) proves that for a functorial resolution of singularities $\rho_n$ and any $S_{\lambda}$ such that $H_2 = e$ the restriction $\rho_n^{-1}(S_{\lambda}) \to S_{\lambda}$ is a trivial fibration.}.

\vspace{0.3cm}

Let $N  = \{(x_1, \ldots x_n) \in (\AA^{k})^{n} \mid x_1 + \cdots + x_n = 0  \} \subset \AA^{kn}$ be a linear subspace complementary to the small diagonal $S_{(n^1)} \subset \AA^{kn}$. It is preserved by the action 
 of $S_n$ and we have an $S_n$-equivariant isomorphism $ N \times S_{(n^1)} \overset{\sim}{\longrightarrow} (\AA^{k})^{n}$ given by $(x, y) \mapsto x+y$.
Since the action of $S_n$ on $S_{(n^1)}$ is trivial, it induces an isomorphism $N / S_n \times \Delta^n_{\AA^k} \overset{\sim}{\longrightarrow} \Sym^{n}(\AA^k)$.
Since $\Delta^n_{\AA^k} \cong \AA^k$ is smooth, we have a Cartesian square
\begin{equation*}
\begin{tikzcd}[row sep=1cm]
\widetilde{N / S_n} \times \Delta^n_{\AA^k} \arrow{d}{\pi_{n} \times \Id} \arrow{r}{\sim} & \widetilde{\Sym}^n(\AA^k) \arrow{d}{\rho_n} \\
N / S_n \times \Delta^n_{\AA^k} \arrow{r}{\sim}& \Sym^{n}(\AA^k), 
\end{tikzcd}
\end{equation*}
where $\pi_n: \widetilde{N / S_n} \to N/S_n$ is a functorial resolution. In particular, $\rho^{-1}_n(\Delta^n_{\AA^k}) \overset{\sim}{\rightarrow} \pi_n^{-1}(0) \times \Delta^n_{\AA^k}$.

\vspace{0.4cm}

\textit{Step 2. Let $U_n  = \Sym^n(\AA^k) \, \backslash \, \Delta^n_{\AA^k}$ be the complement of the small diagonal. Assume that $\rho_i: \widetilde{\Sym}^i(\AA^k) \to \Sym^i(\AA^k)$ has $\LL$-rational fibers for all $i < n$. Then $ \rho^{-1}_n(U_n) \overset{\rho_n}{\longrightarrow} U_n$ has $\LL$-rational fibers.}

\vspace{0.3cm}

By Lemma \ref{local}, it is enough to prove that $\rho^{-1}_n(\AA^k_{\lambda}) \to \AA^k_{\lambda}$ has $\LL$-rational fibers for every partition $\lambda \neq (n^1)$. Note that
\[
\AA^{kn}/H_{\lambda} \cong \prod_{i=1}^n \Sym^{l_i} (\Sym^i(\AA^k)),
\]
where $l_n = 0$ if $\lambda \neq (n^1)$. Consider the composition:
\[
\prod_{i=1}^{n-1} \widetilde{\Sym}^{l_i} (\widetilde{\Sym}^i(\AA^k)) \overset{\alpha}{\longrightarrow} \prod_{i=1}^{n-1} {\Sym}^{l_i} (\widetilde{\Sym}^i(\AA^k)) \overset{\beta}{\longrightarrow} \prod_{i=1}^{n-1} \Sym^{l_i} (\Sym^i(\AA^k)) 
\]
where the map $\beta$ is the product of symmetric powers of the maps $\rho_i$, and the map $\alpha$ is the product of resolutions of singularities of ${\Sym}^{l_i} (\widetilde{\Sym}^i(\AA^k))$. It is clear that $\alpha$ and $\beta$ are both proper and birational, so $\beta \circ \alpha$ is a resolution of singularities (although it might not be functorial).

For every point $x \in S_{\lambda}$ we have $\, \Stab_{S_n}(x) \subset H_{\lambda}$, so the quotient morphism $\mu: \AA^{kn} / H_{\lambda} \to \Sym^{n}(\AA^k)$ is \'etale at $y$ for every point $y \in S_{\lambda}/H_{\lambda}$ by \cite[Expos\'e V. Proposition 2.2]{SGA1}. Therefore, there exist an open subset $W' \subset \AA^{kn}/H_{\lambda}$ containing $S_{\lambda}/H_{\lambda}$, such that $\left. \mu \right|_{W'}$ is \'etale. Moreover, by shrinking $W'$ if needed we can assume that $W' \cap \mu^{-1}(\AA^k_{\lambda}) = S_{\lambda}/H_{\lambda}$.

Let $W$ be the preimage of $W'$ under the projection $\AA^{kn} \to \AA^{kn}/H_{\lambda}$, so that $W' = W/H_{\lambda}$. Consider a commutative diagram
\begin{equation*} \label{diag2}
\begin{tikzcd}[row sep=1cm]
\prod_{i=1}^{n-1} \widetilde{\Sym}^{l_i} (\widetilde{\Sym}^i(\AA^k)) \arrow{d}{\beta \circ \alpha}
& \widetilde{{W}/{H_{\lambda}}} \arrow{r}{\widetilde{\mu}} \arrow{d}{\pi_{W/H_{\lambda}}}
& \widetilde{\Sym}^{n}(\AA^k) \arrow{d}{\rho_n}
\\
\prod_{i=1}^{n-1} \Sym^{l_i} (\Sym^i(\AA^k))  \arrow[r, hookleftarrow]
& {W}/{H_{\lambda}}  \arrow{r}{\left. \mu \right|_{W'}}
& \Sym^{n}(\AA^k)
\\
{S_{\lambda}}/{H_{\lambda}} \arrow[hookrightarrow]{u} 
& {S_{\lambda}}/{H_{\lambda}} \arrow[hookrightarrow]{u} \arrow{r}{\sim} \arrow[l, "\sim" above] 
& \AA^k_{\lambda} \arrow[hookrightarrow]{u}
\end{tikzcd}
\end{equation*}
where $\pi_{W/H_{\lambda}}$ is a functorial resolution. Since $\left. \mu \right|_{W'}$ is \'etale and  $(\left. \mu \right|_{W'})^{-1}(\AA^k_{\lambda}) = S_{\lambda}/H_{\lambda}$, it follows that all squares of the diagram are Cartesian and $\rho_n^{-1}(\AA^k_{\lambda}) \cong \pi^{-1}_{W/H_{\lambda}}(S_{\lambda}/H_{\lambda})$.

Note that by Remark \ref{independence} applied to $W/H_{\lambda}$, the map $\pi^{-1}_{W/H_{\lambda}} (S_{\lambda}/H_{\lambda}) \to S_{\lambda}/H_{\lambda}$ has $\LL$-rational fibers if and only if $({\beta \circ \alpha})^{-1} (S_{\lambda}/H_{\lambda}) \to S_{\lambda}/H_{\lambda}$ has $\LL$-rational fibers. Since $\rho_i: \widetilde{\Sym}^i(\AA^k) \to \Sym^i(\AA^k)$ has $\LL$-rational fibers for all $i < n$ by assumption, it follows from Proposition \ref{sym_have_rational_fibers} that $\beta$ has $\LL$-rational fibers as well. It remains to note that $\beta^{-1}(S_{\lambda}/H_{\lambda})$ lies in the smooth locus of $\prod_{i=1}^{n-1} {\Sym}^{l_i} (\widetilde{\Sym}^i(\AA^k))$, therefore $\alpha^{-1}(\beta^{-1}(S_{\lambda}/H_{\lambda})) \cong \beta^{-1}(S_{\lambda}/H_{\lambda})$. Hence, we conclude that $({\beta \circ \alpha})^{-1} (S_{\lambda}/H_{\lambda}) \to S_{\lambda}/H_{\lambda}$ has $\LL$-rational fibers.

\vspace{0.4cm}

\textit{Step 3. $\rho_n: \widetilde{\Sym}^n(\AA^k) \to \Sym^n(\AA^k)$ has $\LL$-rational fibers for every $k, n > 0$.}

\vspace{0.3cm}

We prove the statement by induction on $n$ (for every fixed $k$). If $n = 1$, then there is nothing to prove. Assume now that $\rho_i$ has $\LL$-rational fibers for every $i < n$. By step 2, $\rho_n^{-1}(U_n) \to U_n$ has $\LL$-rational fibers, so by Lemma \ref{local} it is enough to prove that $\rho_n^{-1}(\Delta^n_{\AA^k}) \to \Delta^n_{\AA^k}$ has $\LL$-rational fibers. By step 1, $\rho^{-1}_n(\Delta^n_{\AA^k}) \overset{\sim}{\rightarrow} \pi_n^{-1}(0) \times \Delta^n_{\AA^k}$, hence we only need to prove that $[\pi_n^{-1}(0)] = 1$ in $K_0(\cV_{K})/(\LL)$. We will do this by exploiting a functorial resolution of singularities
\[
\rho'_n: \widetilde{\Sym}^n(\PP^k) \to {\Sym}^n(\PP^k)
\]
of symmetric powers of projective spaces.

Note that $\Sym^n({\PP}^k)$ can be covered by open subsets isomorphic to $\Sym^n({\AA}^k)$ and such that $ {\Sym}^n(\AA^k) \cap \PP^k_{\lambda} = \AA^k_{\lambda} \subset \PP^k_{\lambda}$ for all partitions $\lambda$. Indeed, let us assume first that $K = \overline{K}$. In this case, for any closed point $p = (p_1, \ldots, p_n) \in {\Sym}^n(\PP^k)$ we can find a hyperplane $P \subset \PP^k$ such that $p_i \notin P$ for all $i$. If we denote by $\iota_{P}: \AA^k \cong (\PP^k \, \backslash \, P) \hookrightarrow \PP^k$ the corresponding inclusion, then the image of $\iota_{P}$ is an open subvariety of $\PP^k$, isomorphic to $\AA^k$ and containing $p_i$ for all $i$. In particular, $\Sym^n(\iota_{P}): \Sym^{n}(\AA^k) \hookrightarrow {\Sym}^n(\PP^k)$ is an open neighborhood of $p$, and all such neighborhoods form the desired cover. Since the above construction is $\Gal(\overline{K}/K)$-invariant, we see that the statement is true over $K$ as well.

Let ${U}'_n = {\Sym}^n(\PP^k) \, \backslash \, \Delta^n_{\PP^k}$. The observation from the previous paragraph shows that ${U}'_n$ can be covered by open subsets isomorphic to $U_n$, thus $(\rho'_n)^{-1}({U}'_n) \to {U}'_n$ has $\LL$-rational fibers. By the same observation, $\rho'_n$ is a trivial fibration over ${\Sym}^n(\AA^k) \cap \Delta^n_{\PP^k} =\Delta^n_{\AA^k} \subset \Delta^n_{\PP^k}$ with fiber $\pi_n^{-1}(0)$ (for each open subset  ${\Sym}^n(\AA^k)$ from the cover), so $\rho'_n$ is a piecewise trivial fibration over $\Delta^n_{\PP^k}$ with fiber $\pi_n^{-1}(0)$. Therefore, we have the following series of equalities in $K_0(\cV_K)/(\LL)$:
\begin{align*}
[\widetilde{\Sym}^n(\PP^k)]
& = [(\rho'_n)^{-1}(U'_n)]+[(\rho'_n)^{-1}( \Delta^n_{\PP^k})] \equiv [U'_n] + [\pi_n^{-1}(0)] \cdot [\Delta^n_{\PP^k}] \\
    & = ([\Sym^{n}(\PP^k)]- [\PP^k]) + [\pi_n^{-1}(0)] \cdot [\PP^k] \equiv [\pi_n^{-1}(0)] ,   
\end{align*}
where in the last equality we used that $[\Sym^{n}(\PP^k)] = [\PP^k] =  1$ in $K_0(\cV_K)/(\LL)$.

Since $\Sym^n(\PP^k)$ is rational (\cite[Ch. 4. Theorem 2.8]{Discriminants}), it follows that $\widetilde{\Sym}^n(\PP^k)$ is also rational. Additionally,  $\widetilde{\Sym}^n(\PP^k)$ is smooth and projective, so by Theorem \ref{Lar_Lun} we have $ [\widetilde{\Sym}^n(\PP^k)] = 1$ in $K_0(\cV_K)/(\LL)$, and thus $[\pi_n^{-1}(0)] = 1$ in $K_0(\cV_K)/(\LL)$.

\end{proof}

To deduce Theorem \ref{resolution}, we will use a lemma by D. Luna.

\begin{lemma}[{\cite[Lemme Fondamental]{Lu1}}] \label{Fondamental}

Let $G$ be a reductive group. Let $Y, Z$ be affine varieties over $K$ with a $G$-action, and let $\varphi: Y \to Z$ be a $G$-equivariant morphism. Let $y \in Y$ be a point such that $\varphi$ is \'etale at $y$, $Y$ is normal at $y$, $Z$ is normal at $\varphi(y)$, the orbits $Gy$ and $G(\varphi(y))$ are closed in $Y$ and $Z$ respectively, and $\Stab_{G}(y) = \Stab_{G}({\varphi(y)})$. Then there exists a $G$-invariant affine open neighborhood $W$ of $y$ such that the morphism $\varphi/G: W/G \to Z/G$ is \'etale. 

\end{lemma}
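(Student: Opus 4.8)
The plan is to reduce, using reductivity of $G$ and the standard properties of GIT quotients $\pi_X\colon X\to X/\!/G$ of affine varieties (surjectivity; separation of disjoint closed $G$-invariant subsets by invariant functions; uniqueness of the closed orbit in a fibre; exactness of the $G$-invariants functor and the Reynolds operator, both available since $\Char K = 0$), to an infinitesimal statement about completions, and then to settle that by infinitesimal lifting of \'etale covers. I would begin with a sequence of reductions. Since $\varphi$ is $G$-equivariant and \'etale at $y$, it is \'etale along the whole orbit $Gy$; as $Gy$ is closed in the affine variety $Y$ and reductive quotients separate disjoint closed invariant subsets, I may replace $Y$ and $Z$ by $G$-saturated affine open neighbourhoods of $Gy$ and $G\varphi(y)$ (preimages under $\pi_Y,\pi_Z$ of affine opens of the quotients, again affine since $\pi$ is an affine morphism) and thereby assume that $\varphi$ is \'etale everywhere, that $Y$ and $Z$ are normal, and that $\varphi(Y)\subseteq Z$ — such shrinkings keep $Gy$ closed and preserve all hypotheses. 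Next, $\varphi^{-1}(G\varphi(y))$ is closed, $G$-invariant, and \'etale over the smooth variety $G\varphi(y)$, hence smooth, so its components are pairwise disjoint; $Gy$ is a union of some of them, and the rest form a closed $G$-invariant set disjoint from $Gy$, which I separate off by an invariant function. After this localization $\varphi^{-1}(G\varphi(y))=Gy$ \emph{scheme-theoretically} (it is reduced, being smooth over $G\varphi(y)$), and, using $\Stab_G(y)=\Stab_G(\varphi(y))$, $\varphi$ restricts to an isomorphism $Gy\xrightarrow{\ \sim\ }G\varphi(y)$.

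Next I would reduce the problem to completions. Write $A=\cO(Z)$, $B=\cO(Y)$, $R=A^G$, $S=B^G$, so that $\psi:=\varphi/\!/G$ is the map $\Spec S\to\Spec R$; put $\bar y=\pi_Y(Gy)$, $\bar z=\psi(\bar y)$, and let $\fm\subset R$ be the maximal ideal at $\bar z$. It is enough to prove that $\psi$ is \'etale at $\bar y$, for then $\psi$ is \'etale on some affine open $V'\ni\bar y$ and $W:=\pi_Y^{-1}(V')$ is the desired $G$-invariant (indeed $G$-saturated) affine open neighbourhood of $y$, with $W/\!/G=V'$ mapping \'etale to $Z/\!/G$. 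And to get that, it suffices to show $S\otimes_R\widehat R_{\fm}=\widehat R_{\fm}$, i.e.\ that $\psi$ becomes an isomorphism after completing along $\bar z$: this forces $\psi^{-1}(\bar z)$ to be a single reduced point and, by faithfully flat descent along $R_{\fm}\to\widehat R_{\fm}$, forces $S$ to be flat over $R$ there, so $\psi$ is flat and unramified, hence \'etale, at $\bar y$. (The hypotheses and all quotients are stable under base change to $\overline K$, so there is no residue-field issue.)

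The last and hardest step is the infinitesimal computation. Let $I=\fm A$ be the ideal of the fibre $\pi_Z^{-1}(\bar z)$ and let $\widehat A,\widehat B$ be the $I$-adic and $IB$-adic completions; then $\widehat A=\widehat R_{\fm}\otimes_R A$ and $\widehat B=\widehat R_{\fm}\otimes_R B=\widehat A\otimes_A B$ is \'etale over $\widehat A$, and since taking $G$-invariants commutes with tensoring a $G$-module over $R$ with an $R$-algebra (split off the nontrivial isotypic part via Reynolds) one gets $\widehat A^{\,G}=\widehat R_{\fm}$ and $\widehat B^{\,G}=S\otimes_R\widehat R_{\fm}$. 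So what must be shown is that the $G$-equivariant \'etale $\widehat A$-algebra $\widehat B$ satisfies $\widehat B^{\,G}=\widehat A^{\,G}$. By topological invariance of the \'etale site, $\widehat B$ is the unique lift of the equivariant \'etale cover $\varphi^{-1}(\pi_Z^{-1}(\bar z))\to\pi_Z^{-1}(\bar z)$ of the fibre, and by the reductions this cover is \emph{split over the closed orbit} $G\varphi(y)$. Since the closed orbit is invariant-theoretically dense in the fibre — the ideal of its reduced structure has vanishing $G$-invariants, because the closed orbit and the whole fibre both have one-point quotient — and since an equivariant \'etale cover split over the closed orbit is, compatibly with the quotient, determined by that splitting on every infinitesimal neighbourhood, passing to $G$-invariants collapses $\widehat B$ back to $\widehat A^{\,G}$. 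This is the equality sought, and together with the previous step it gives the lemma.

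\textbf{The main obstacle} is exactly this last step: upgrading ``$\varphi$ \'etale and an isomorphism on the reduced closed orbit'' to ``$\varphi/\!/G$ an isomorphism after completion along $\bar z$'' — equivalently, showing that a $G$-equivariant \'etale cover which splits over the closed orbit has trivial GIT quotient near the image of that orbit. This is where both the infinitesimal lifting of \'etale morphisms and the exactness of the invariants functor for a reductive group in characteristic zero are genuinely needed, and where the reduction to $\varphi^{-1}(G\varphi(y))=Gy$ carries the weight; everything else is routine bookkeeping with GIT quotients and completions.
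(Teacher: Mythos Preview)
The paper does not prove this lemma at all: it is simply quoted from Luna's original paper \cite{Lu1} and then applied (with $G=S_n$, $Y=U^n$, $Z=(\AA^k)^n$) in the proof of Theorem~\ref{resolution}. So there is no proof in the paper to compare yours against; you have supplied far more than the author does.

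As for the sketch itself, your reductions in the first paragraph and the passage to completions in the second are the standard opening moves and are essentially correct. The point you flag as the ``main obstacle'' is indeed the heart of the matter, and your explanation there is the weakest part of the write-up. Topological invariance of the \'etale site compares \'etale covers of a scheme and of its \emph{reduction}, not of a scheme and of a closed subscheme such as the closed orbit inside the fibre; the fibre $\pi_Z^{-1}(\bar z)$ is typically not a nilpotent thickening of $G\varphi(y)$, so the sentence ``an equivariant \'etale cover split over the closed orbit is \ldots\ determined by that splitting on every infinitesimal neighbourhood'' does not follow from topological invariance as stated. What actually makes the argument work is a direct induction on the powers $I^n$ of the ideal $I=\fm A$: one shows $(B/I^nB)^G=(A/I^n)^G$ for all $n$ using the Reynolds operator and the fact that, after your reduction $\varphi^{-1}(G\varphi(y))=Gy$, the map on invariants modulo $I$ is already an isomorphism of residue fields; flatness of $B$ over $A$ then lets you lift this through the $I$-adic filtration. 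Your intuition that ``the closed orbit is invariant-theoretically dense in the fibre'' is the right one, but it should be implemented via this filtration argument rather than by invoking topological invariance of the \'etale site.
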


\begin{proof}[Proof of Theorem \ref{resolution}] First we assume that $X$ is smooth. Let $x \in X^n$ be a closed point given by a tuple $ (x_1, \ldots, x_n) \in (X(\overline{K}))^n$. Since $X$ is quasi-projective, there exists an affine open subvariety $V \subset X$ such that $x_i \in V(\overline{K})$ for all $i$. I claim that there exists a morphism $f: V \to \AA^k$ such that $f_{\overline{K}}: V_{\overline{K}} \to \AA^k_{\overline{K}}$ is \'etale at $x_i$ for all $i$ and $f_{\overline{K}}(x_i) \neq f_{\overline{K}}(x_j)$ for $x_i \neq x_j$. Indeed, without loss of generality we can assume that $V$ is a closed $k$-dimensional subvariety of an affine space $\AA^m$. The space of orthogonal projections $V_{\overline{K}} \to \AA^k_{\overline{K}}$ to linear $k$-dimensional subspaces is parametrized by the Grassmannian $(\Gr_{k, m})_{\overline{K}}$, and each of the conditions that $f_{\overline{K}}$ is \'etale at $x_i$ and $f_{\overline{K}}(x_i) \neq f_{\overline{K}}(x_j)$ corresponds to a nonempty irreducible open subset of $(\Gr_{k, m})_{\overline{K}}$. Moreover, the set of $K$-rational points of $\Gr_{k, m}$ is dense, therefore a projection which is defined over $K$ and with the desired properties  exists.

Since the set of points where a map is \'etale is open, there is an open open subset $U \subset V$ such that $x_i \in U(\overline{K})$ for all $i$ and $f_{\overline{K}}: U_{\overline{K}} \to \AA^k_{\overline{K}}$ is \'etale. By shrinking $U$ if needed, we can assume that it is affine. Note that the open subset $U^n \subset X^n$ contains $h(x)$ for all $h \in S_n$ and the morphism $f^n: U^n \to (\AA^k)^n$ is also \'etale (since its base change to $\overline{K}$ is \'etale as a product of \'etale morphisms). Moreover, one has $\Stab_{S_n}(x) = \Stab_{S_n}(f^n(x))$ because $f_{\overline{K}}(x_i) \neq f_{\overline{K}}(x_j)$ for $x_i \neq x_j$. Hence, by Lemma \ref{Fondamental}, there exists an $S_n$-invariant affine open neighborhood $W \subset U^n$ of $x$ such that $\Sym^n(f): W/S_n \to \Sym^n(\AA^k)$ is \'etale. The statement of the theorem now follows from Lemma \ref{etale_resolution} and Proposition \ref{main_proposition}.

Finally, assume that $X$ has $\LL$-rational singularities, and let $\pi_X: \widetilde{X} \to X$ be a resolution of singularities. Consider the composition:
\[
\widetilde{\Sym}^n(\widetilde{X}) \overset{\chi_n}{\longrightarrow} {\Sym}^n(\widetilde{X}) \overset{\Sym^n(\pi_X)}{\longrightarrow} {\Sym}^n({X}),
\]
where $\chi_n$ is a resolution of singularities of ${\Sym}^n(\widetilde{X})$. Since $\widetilde{\Sym}^n(\widetilde{X})$ is smooth and both $\chi_n$ and $\Sym^n(\pi_X)$ are proper and birational, their composition is a resolution of singularities of ${\Sym}^n({X})$. Now, $\Sym^n(\pi_X)$ has $\LL$-rational fibers by Proposition \ref{sym_have_rational_fibers}, and $\chi_n$ has $\LL$-rational fibers by the smooth case. Hence, their composition $\Sym^n(\pi_X) \circ \chi_n$ has $\LL$-rational fibers (by \cite[Lemma 2.9]{ES}).

\end{proof}

\begin{corollary} \label{class_of_resolution_coincide}

Let $X$ be a quasi-projective variety over $K$ which has $\LL$-rational singularities, and let $\widetilde{\Sym}^n(X)$ be a resolution of singularities of $\Sym^n(X)$. Then
\[
[\widetilde{\Sym}^n(X)] = [\Sym^n(X)]
\]
in $K_0(\cV_K)/(\LL)$.
    
\end{corollary}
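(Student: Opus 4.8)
The plan is to deduce this immediately from the two principal inputs already established: Theorem \ref{resolution}, which guarantees that $\Sym^n(X)$ inherits $\LL$-rational singularities from $X$, and Corollary \ref{classes_coincide} (the statement of \cite[Lemma 4.2.5]{NS}), which converts $\LL$-rational fibers of a proper morphism into an equality of classes modulo $(\LL)$. So essentially no new argument is needed; the work is in invoking the earlier results in the right order.

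First I would apply Theorem \ref{resolution} with the given quasi-projective variety $X$: since $X$ has $\LL$-rational singularities and $n \in \ZZ_{>0}$, the symmetric power $\Sym^n(X)$ has $\LL$-rational singularities. By definition this means that for \emph{some} resolution of singularities the structure morphism has $\LL$-rational fibers. To pass to the arbitrary resolution $\widetilde{\Sym}^n(X)$ appearing in the statement, I would next invoke Remark \ref{independence}: the property of having $\LL$-rational singularities is independent of the chosen resolution (this follows from the weak birational factorization theorem \cite[Theorem 0.3.1]{AKMW}), so the morphism $\widetilde{\Sym}^n(X) \to \Sym^n(X)$ also has $\LL$-rational fibers.

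Finally, I would apply Corollary \ref{classes_coincide} to the proper birational morphism $\pi \colon \widetilde{\Sym}^n(X) \to \Sym^n(X)$, which has $\LL$-rational fibers by the previous step. This yields $[\widetilde{\Sym}^n(X)] = [\Sym^n(X)]$ in $K_0(\cV_K)/(\LL)$, as desired.

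There is no real obstacle here: the corollary is a formal consequence of Theorem \ref{resolution}, the independence remark, and Corollary \ref{classes_coincide}. The only point requiring a moment's care is to note explicitly that the hypothesis of Corollary \ref{classes_coincide} applies to the specific resolution named in the statement, which is exactly what Remark \ref{independence} provides.
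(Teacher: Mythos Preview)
Your proof is correct and matches the paper's own argument, which simply cites Theorem \ref{resolution} and Corollary \ref{classes_coincide}. Your explicit mention of Remark \ref{independence} to handle an arbitrary resolution is a reasonable clarification, but the underlying approach is identical.
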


\begin{proof}

This follows from Theorem \ref{resolution} and Corollary \ref{classes_coincide}.
    
\end{proof}

\begin{remark}

Equip the free ring $\ZZ[\cV_K]$ with a $\lambda$-structure defined by $\lambda^n(X) = \Sym^n(X)$ for every $K$-variety $X$. Denote by $\ZZ[\cV_K^{\LL\text{-rat}}] \subset \ZZ[\cV_K]$ a subring generated by quasi-projective varieties with $\LL$-rational singularities. Then an equivalent way to state Theorem \ref{resolution} is to say that $\ZZ[\cV_K^{\LL\text{-rat}}]$ is a sub-$\lambda$-ring of $\ZZ[\cV_K]$.

\end{remark}

\vspace{0.3cm}

\section{Irrationality theorem }\label{section:main}

In this section, we prove Theorem \ref{irrationality_result}, building upon the method used in \cite{LL2}. We first recall the construction of motivic measures $\mu_k$ which are used in the proof.

\subsection{The ring \texorpdfstring{$\overline{K}(X)$}{TEXT}.}

Following \cite{LL2}, for a variety $X$ over an algebraically closed field we denote by $\overline{K}(X)$ the quotient of the free abelian group generated by the isomorphism classes $[\cE]$ of vector bundles on $X$ by the subgroup generated by the elements $[\cE] - [\cF] - [\cG]$ such that $\cE$ is isomorphic to $\cF \oplus \cG$ as vector bundles. Note that we do not quotient by general short exact sequences. In particular, the global section functor gives us a well-defined group homomorphism
\[
H^0(X, -): \overline{K}(X) \to K[\Vect].
\]

The tensor product of bundles makes $\overline{K}(X)$ into a ring and the exterior power $\lambda$-operations $\lambda^i([\cE]) \defeq [\Lambda^i \cE]$ define a special $\lambda$-structure on $\overline{K}(X)$ by \cite[Theorem 5.1]{LL2}. The Adams operations $\Psi^n$ are defined on $\overline{K}(X)$ in the usual way. In particular, for any bundle $\cE$ and any $n \ge 0$ the dimension $\dim(H^0(X, \Psi^n \cE)) \in \ZZ$ of the virtual vector space $H^0(X, \Psi^n \cE)$ is defined. Moreover, for any line bundle $\cL$ one has $\Psi^n[\cL] = [\cL^{\otimes n}]$, and $\Psi^n$ are homomorphisms of $\lambda$-rings (since $\overline{K}(X)$ is special).

\subsection{Motivic measures.} \label{motivic_measures}

For a ring $A$, an $A$-valued motivic measure on $\cV_K$ is a ring homomorphism $\mu: K_0(\cV_K) \to A$. For any motivic measure $\mu$ we can define the image of the motivic zeta function of $X$ by
\[
Z_{\mu}(X, t) \defeq 1 + \sum_{n \geqslant 1} \mu([\Sym^n(X)]) t^n \in 1+tA[[t]].
\]
In particular, if $\mu = \Id: K_0(\cV_K) \to K_0(\cV_K)$, then $Z_{\Id}(X, t) = Z(X, t)$.

\begin{definition} \label{def:rationality}
We say that $Z_{\mu}(X, t)$ is \textit{rational} if there exist polynomials $f, g \in A[t]$ such that $g$ is invertible in $A[[t]]$ and $Z_{\mu}(X, t) = f(t)/g(t)$. Clearly, if $Z(X, t)$ is rational, then $Z_{\mu}(X, t)$ is rational for any $\mu$. We also say that $Z(X, t)$ is \textit{pointwise rational} if there exists a motivic measure $\mu: K_0(\cV_K) \to F$, such that $F$ is a field and $Z_{\mu}(X, t)$ is rational.

\end{definition}

The following result by F. Bittner says that in order to construct a motivic measure, it is enough to define it on smooth connected projective varieties and check that the ``blow-up relations'' are satisfied:

\begin{theorem}[{\cite{Bit}}] \label{bittner}

Let $\ZZ[{\cV^{P}_K}]$ be the free abelian group on isomorphism classes of non-singular, connected, projective varieties over $K$. Then the natural homomorphism $\varphi: \ZZ[{\cV^{P}_K}] \to K_0(\cV_K)$ is surjective and the kernel of $\varphi$ is generated by elements $[\varnothing]$ and $([\Bl_Y(X)] - [E]) - ([X]-[Y])$, where $X, Y$ are smooth connected projective varieties, $Y$ is a subvariety of $X$, $\Bl_Y(X)$ is the blow-up of $X$ along $Y$, $E$ is the exceptional divisor of the blow-up.
    
\end{theorem}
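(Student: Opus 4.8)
The plan is to follow Bittner's original argument \cite{Bit}, treating the surjectivity of $\varphi$ and the description of $\ker\varphi$ as two separate tasks; the non-elementary inputs are Hironaka's resolution of singularities and the weak factorization theorem \cite[Theorem 0.3.1]{AKMW}, both available over our field $K$.

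\textbf{Surjectivity.} First I would show, by Noetherian induction on $\dim X$, that every class $[X]\in K_0(\cV_K)$ lies in the image of $\varphi$. Using the scissor relations one reduces to $X$ irreducible. Choosing a resolution $\pi\colon\widetilde X\to X$ which is an isomorphism over a dense open $U\subseteq X$, one gets $[X]=[\widetilde X]-[\pi^{-1}(X\setminus U)]+[X\setminus U]$, and the last two terms lie in the image of $\varphi$ by the inductive hypothesis, since $\dim(X\setminus U)<\dim X$. It then remains to handle $[\widetilde X]$ for $\widetilde X$ smooth: picking a smooth compactification $\widetilde X\hookrightarrow Y$ whose boundary $D=D_1\cup\cdots\cup D_r$ is a simple normal crossings (SNC) divisor, inclusion--exclusion yields $[\widetilde X]=\sum_{I\subseteq\{1,\dots,r\}}(-1)^{|I|}[D_I]$ with $D_\emptyset:=Y$ and $D_I:=\bigcap_{i\in I}D_i$, and each $D_I$ is a smooth projective variety, hence a disjoint union of smooth connected projective ones. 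This settles surjectivity (the case $\dim X=0$ being immediate). Since $[\varnothing]$ and the blow-up relations manifestly map to zero, the subgroup $\mathcal R$ they generate lies in $\ker\varphi$; write $K_0^{\mathrm{bl}}:=\ZZ[\cV^{P}_K]/\mathcal R$ and $\bar\varphi\colon K_0^{\mathrm{bl}}\twoheadrightarrow K_0(\cV_K)$ for the induced surjection.

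\textbf{The kernel.} It suffices to construct a section $s$ of $\bar\varphi$. Since $K_0(\cV_K)$ is by definition the free abelian group on isomorphism classes of $K$-varieties modulo the scissor relations, this amounts to assigning to every variety $X$ an element $e(X)\in K_0^{\mathrm{bl}}$ with $e(X)=e(Z)+e(X\setminus Z)$ for all closed $Z\subseteq X$ and $e(X)=[X]$ for $X$ smooth connected projective. I would build $e$ in two stages. First, for $X$ \emph{smooth}, choose an SNC compactification $(\bar X,\,D=D_1\cup\cdots\cup D_r)$ and set $e(X):=\sum_{I\subseteq\{1,\dots,r\}}(-1)^{|I|}[D_I]\in K_0^{\mathrm{bl}}$, with $D_\emptyset:=\bar X$. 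The essential point is that this does not depend on the compactification: any two SNC compactifications of $X$ are both dominated by a common SNC compactification, so by the weak factorization theorem it is enough to see that $e(X)$ is unchanged when $\bar X$ is replaced by a blow-up $\Bl_Z\bar X$ along a smooth center $Z$ disjoint from $X$ and in normal crossing position with $D$; this is a combinatorial identity obtained by applying the blow-up relation (in its iterated form) simultaneously to $\bar X$ and to every stratum $D_I$ containing $Z$, all the exceptional contributions cancelling in the alternating sum. Second, for arbitrary $X$, pick a finite stratification $X=\bigsqcup_j X_j$ into smooth locally closed subvarieties and put $e(X):=\sum_j e(X_j)$; independence of the stratification, and hence additivity, reduces through common refinements to the identity $e(X)=e(X\setminus Z)+e(Z)$ for $X$ smooth and $Z\subseteq X$ smooth closed, which follows from the smooth construction by choosing a compactification in which the closure of $Z$ is smooth and transverse to $D$ and reorganizing the inclusion--exclusion sum. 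Finally, a smooth connected projective $X$ is its own SNC compactification with empty boundary, so $e(X)=[X]$; thus $e$ descends to a homomorphism $s\colon K_0(\cV_K)\to K_0^{\mathrm{bl}}$ with $s\circ\bar\varphi=\mathrm{id}$, forcing $\bar\varphi$ to be an isomorphism, i.e. $\ker\varphi=\mathcal R$.

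\textbf{Main obstacle.} The hard part will be the well-definedness of $e$ on smooth varieties: that the alternating sum over the SNC boundary is independent of the smooth compactification. This is exactly where weak factorization is indispensable, and the delicate piece is the bookkeeping after a single admissible blow-up — one must match, stratum by stratum, the new exceptional divisor together with all of its intersections with the strict transforms of the boundary components against the old strata, and verify that everything other than $e(X)$ itself cancels. This requires the blow-up relation not merely as stated but for chains of blow-ups along smooth centers in normal crossing position, a form which itself must be derived inside $K_0^{\mathrm{bl}}$ from the generating relations. By comparison, the extension of $e$ to singular varieties and the independence of the stratification are routine once the smooth case is in hand.
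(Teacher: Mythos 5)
The paper does not reprove this result; it cites it directly to Bittner~\cite{Bit}. Your outline faithfully reconstructs Bittner's original argument --- surjectivity by resolution and SNC compactification with inclusion--exclusion, and the kernel description by building a section through the alternating-sum assignment on smooth varieties whose well-definedness rests on weak factorization --- and you correctly flag the blow-up bookkeeping as the one genuinely delicate step.
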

\subsection{Measures \texorpdfstring{$\mu_k$}{TEXT}.} \label{Measures} Let $M$ be the multiplicative semigroup of polynomials $f \in \ZZ[s]$ with $f(0) = 1$, and let $\ZZ[M]$ be the semigroup algebra of $M$. Since any element of $M$ can be uniquely written as $g_1^{q_1} \ldots g_r^{q_r}$ where $g_i \in M$ and each $g_i$ is irreducible, it follows that $M$ is a free commutative monoid. In particular, its group completion is a free abelian group and $\ZZ[M]$ is isomorphic to a polynomial ring in infinitely many variables.

The ring $\ZZ[M]$ has a natural $\lambda$-structure defined as follows. Let us identify $\ZZ[s]$ with the ring of isomorphism classes of $\NN$-graded virtual finite-dimensional vector spaces. Then we can view any $m \in M$ as an $\NN$-graded virtual vector space $\sum_{p=0}^N V_p s^p$. We define
\begin{equation*}
\lambda^i(V_p s^p) = 
\begin{cases}
   \Sym^i(V_p)s^{ip}, &  \text{if } p \text{ is even};  \\
   \Lambda^i(V_p)s^{ip}, & \text{if } p \text{ is odd}.
 \end{cases}
 \end{equation*}

Now we assume $K = \overline{K}$. For every smooth connected projective $X$ over $K$ of dimension $d$ and each $k \geqslant 1$ define
\[
\mu_k(X) \defeq 1+ \sum_{i=1}^d h_k^i(X)s^i \in M, \quad h_k^i(X) \defeq \dim H^0(X, \Psi^k \Omega^i_X) \in \ZZ.
\]

\begin{proposition}
The mapping $\mu_k$ induces a well-defined homomorphism $\SB/{K} \to M$ of semigroups. In particular, it induces a motivic measure
\[
\mu_k: K_0(\cV_{{K}}) \to \ZZ[M]
\]
which factors through $K_0(\cV_{{K}})/(\LL) \cong \ZZ[\SB/{K}]$.

\end{proposition}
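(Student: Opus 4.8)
The plan is to check that $\mu_k$, so far defined only on isomorphism classes of smooth connected projective $K$-varieties, is \emph{multiplicative} in products and a \emph{stable birational invariant}; everything about $K_0(\cV_K)$ then follows formally from Theorem \ref{Lar_Lun} together with the universal property of the semigroup algebra. Throughout I would use the structural facts about $\overline{K}(X)$ recalled above: $\Psi^k$ is a ring endomorphism (indeed a $\lambda$-ring endomorphism, since $\overline{K}(X)$ is special), it commutes with pullback along any morphism, $H^0(X,-)$ is additive, and $\Psi^k[\cL]=[\cL^{\otimes k}]$ on line bundles. Note also that the constant term of $\mu_k(X)$ is $h_k^0(X)=\dim H^0(X,\Psi^k\cO_X)=\dim H^0(X,\cO_X)=1$, so $\mu_k(X)$ indeed lies in $M$.

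First I would prove $\mu_k(X\times Y)=\mu_k(X)\cdot\mu_k(Y)$ in $M$. Since $\Omega^1_{X\times Y}$ splits as $p_X^*\Omega^1_X\oplus p_Y^*\Omega^1_Y$, one gets $\Omega^l_{X\times Y}=\bigoplus_{a+b=l}p_X^*\Omega^a_X\otimes p_Y^*\Omega^b_Y$ in $\overline{K}(X\times Y)$. Applying $\Psi^k$ and using that it is a ring homomorphism commuting with the $\lambda$-ring pullbacks $p_X^*,p_Y^*$ gives $\Psi^k\Omega^l_{X\times Y}=\sum_{a+b=l}p_X^*(\Psi^k\Omega^a_X)\cdot p_Y^*(\Psi^k\Omega^b_Y)$. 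The Künneth formula $H^0(X\times Y,\,p_X^*\xi\otimes p_Y^*\eta)=H^0(X,\xi)\otimes_K H^0(Y,\eta)$ (valid for honest bundles since $X,Y$ are projective, hence for virtual ones by additivity of $H^0$) then yields $h_k^l(X\times Y)=\sum_{a+b=l}h_k^a(X)h_k^b(Y)$, which is exactly the identity $\mu_k(X\times Y)=\mu_k(X)\mu_k(Y)$ of polynomials. Part (a) of the proposition follows once the well-definedness on $\SB/K$ is in place.

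The main step is that $\mu_k$ is a birational invariant of smooth connected projective varieties. Here I would first reduce $h_k^i(X)$ to global sections of Schur functors of $\Omega^1_X$: by Newton's identities $\Psi^k$ is a universal polynomial in $\lambda^1,\dots,\lambda^k$ of weight $k$, so applying it to $[\Omega^i_X]=[\Lambda^i\Omega^1_X]$ and expanding the resulting tensor products of exterior powers into irreducible $\GL$-representations shows that $\Psi^k\Omega^i_X=\sum_\mu b_\mu\,[\mathbb{S}_\mu(\Omega^1_X)]$ in $\overline{K}(X)$ for integers $b_\mu$, the sum over partitions $\mu$ of $ik$ (via plethysm). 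Hence $h_k^i(X)=\sum_\mu b_\mu\dim H^0(X,\mathbb{S}_\mu(\Omega^1_X))$. Since each Schur functor $\mathbb{S}_\mu$ is a direct summand of the $|\mu|$-th tensor power functor and since a rational map between smooth projective varieties is defined away from a closed set of codimension $\ge 2$, pulling back tensor differential forms and extending sections of locally free sheaves across that codimension-$\ge 2$ locus gives an isomorphism $H^0(X',\mathbb{S}_\mu\Omega^1_{X'})\xrightarrow{\ \sim\ }H^0(X,\mathbb{S}_\mu\Omega^1_X)$ for birational smooth projective $X,X'$ — this is classical and is the relevant ingredient of \cite{LL2}. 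Therefore $h_k^i$, and so $\mu_k$, is a birational invariant. Combining this with the direct computation $\mu_k(\PP^1)=1+\dim H^0(\PP^1,\cO(-2k))\,s=1$ for $k\ge 1$ and with multiplicativity, I get that for stably birational smooth connected projective $X,Y$, say $X\times\PP^n\sim_{\mathrm{bir}}Y\times\PP^m$, one has $\mu_k(X)=\mu_k(X\times\PP^n)=\mu_k(Y\times\PP^m)=\mu_k(Y)$.

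Finally, every stable birational equivalence class contains a smooth connected projective representative (by resolution of singularities in characteristic zero), so the previous paragraph shows that $\langle X\rangle\mapsto\mu_k(X)$ is a well-defined map $\SB/K\to M$, which is a semigroup homomorphism by multiplicativity. Extending $\ZZ$-linearly gives a ring homomorphism $\ZZ[\SB/K]\to\ZZ[M]$; precomposing it with the isomorphism $\Phi$ of Theorem \ref{Lar_Lun} and with the quotient map $K_0(\cV_K)\to K_0(\cV_K)/(\LL)$ produces a motivic measure $\mu_k\colon K_0(\cV_K)\to\ZZ[M]$ which by construction factors through $K_0(\cV_K)/(\LL)$ and restricts to the original $\mu_k$ on smooth connected projective varieties. (Alternatively one could first produce the measure via Bittner's Theorem \ref{bittner} by checking the blow-up relations for the invariants $h_k^i$, and then observe that $\mu_k(\LL)=\mu_k(\PP^1)-\mu_k(\mathrm{pt})=0$ forces it to factor through $(\LL)$; but the route above is more direct and matches the order of the statement.) The only genuinely substantive point is the birational invariance of $\dim H^0(X,\mathbb{S}_\mu\Omega^1_X)$ in the third paragraph — everything else is formal $\lambda$-ring manipulation and the Larsen--Lunts description of $K_0(\cV_K)/(\LL)$.
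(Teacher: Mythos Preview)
Your proof is correct and follows essentially the same route as the paper's: verify multiplicativity via K\"unneth and the ring property of $\Psi^k$, birational invariance via the classical fact about global tensor (hence Schur) differential forms, and the normalization $\mu_k(\PP^1)=1$, then conclude using the Larsen--Lunts isomorphism $K_0(\cV_K)/(\LL)\cong\ZZ[\SB/K]$. The only cosmetic difference is that the paper phrases birational invariance directly as $H^0(X,(\Omega^i_X)^{\otimes n})=H^0(X',(\Omega^i_{X'})^{\otimes n})$ rather than passing through the Schur decomposition, and it cites Bittner's presentation where you invoke Theorem~\ref{Lar_Lun} (your route is in fact the more direct one for the statement as formulated).
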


\begin{proof}

This is proved in \cite[Proposition 6.1]{LL2}. For the reader's convenience, we recall the idea of the proof.

Since any irreducible variety over ${K}$ is birational to a smooth connected projective variety, by Theorem \ref{bittner} it is enough to show that for smooth connected projective varieties:

\begin{enumerate}
    \item[(1.)] $\mu_k(X) = \mu_k(X')$ if $X$ and $X'$ are birational;
    \item[(2.)]  $\mu_k(X \times Y) = \mu_k(X) \mu_n(Y)$;
    \item[(3.)] $\mu_k(\PP^m_{{K}}) = 1$ for $m \geqslant 1$.
\end{enumerate}

The first equality follows from a well-known equality $H^0(X, (\Omega^i_X)^{\otimes n})= H^0(\widetilde{X}, (\Omega^i_{\widetilde{X}})^{\otimes n})$. The second equality is an easy consequence of the K\"unneth theorem and the fact that $\Psi^n$ are $\lambda$-homomorphisms (because $\overline{K}(X)$ is special). Finally, in order to prove (3.), it is enough to show that $\mu_k(\PP^1_{{K}}) = 1$ (by (1.) and (2.)), which is a simple computation.
    
\end{proof}

\subsection{Irrationality result.} 

In this section we assume that $X$ is a smooth connected complex projective variety of dimension $d > 1$. We denote by $\rho_m: Y_m \to \Sym^{m}(X)$ a resolution of singularities of $\Sym^{m}(X)$.




To prove Theorem \ref{irrationality_result}, we will need several lemmas, most of which already appear in \cite{LL2} for the case when $X$ is a surface.

\begin{lemma} \label{inclusion}

For any $m, n \geqslant 0$ there is an inclusion:
\[
H^{0}(Y_m, \, (\Omega^1_{Y_m})^{\otimes n}) \subset H^0(X^m, (\Omega^1_{X^m})^{\otimes n})^{S_m}
\]
    
\end{lemma}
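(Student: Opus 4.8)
The plan is to produce the inclusion by pulling back differential forms along the natural maps connecting $Y_m$, $\Sym^m(X)$, and $X^m$. Let $\pi\colon X^m \to \Sym^m(X)$ be the quotient map and let $\rho_m\colon Y_m \to \Sym^m(X)$ be the given resolution. Form the fiber product $Z = Y_m \times_{\Sym^m(X)} X^m$ with its two projections; let $\widetilde Z \to Z$ be a resolution of singularities of (the main component of) $Z$, so that we get proper birational maps $q\colon \widetilde Z \to Y_m$ and a map $p\colon \widetilde Z \to X^m$ lifting $\pi$. The space $\widetilde Z$ is smooth, the morphism $q$ is proper and birational onto the smooth variety $Y_m$, and $p$ is generically finite of degree $m!$; moreover $p$ is $S_m$-equivariant if we arrange $\widetilde Z$ to carry the $S_m$-action (one can take a functorial resolution, or resolve $S_m$-equivariantly, since $S_m$ acts on $Z$ over $Y_m$).

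The first key step is the standard fact that for a proper birational morphism $q\colon \widetilde Z \to Y_m$ between smooth varieties (more generally, $Y_m$ normal with rational singularities, which holds here since $Y_m$ is smooth), the pullback $q^{*}\colon H^0(Y_m, (\Omega^1_{Y_m})^{\otimes n}) \to H^0(\widetilde Z, (\Omega^1_{\widetilde Z})^{\otimes n})$ is an isomorphism onto its image; in particular it is injective. (This is exactly the birational invariance of spaces of global tensor forms already invoked in the proof of the proposition on $\mu_k$ being well-defined, applied to $(\Omega^1)^{\otimes n}$.) So it suffices to embed $H^0(\widetilde Z, (\Omega^1_{\widetilde Z})^{\otimes n})$ $S_m$-equivariantly-compatibly into $H^0(X^m, (\Omega^1_{X^m})^{\otimes n})^{S_m}$.

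The second key step: since $p\colon \widetilde Z \to X^m$ is a dominant morphism of smooth varieties, pullback of forms $p^{*}\colon H^0(X^m, (\Omega^1_{X^m})^{\otimes n}) \to H^0(\widetilde Z, (\Omega^1_{\widetilde Z})^{\otimes n})$ is injective (a global tensor form on $X^m$ vanishes iff its restriction to a dense open does, and $p$ is dominant). Now I would argue the reverse containment: a global form $\omega$ on $\widetilde Z$, transported via $q^{*-1}$ to $Y_m$, comes from $X^m$. Working over the locus where everything is an isomorphism — namely the preimage of the open $U \subset \Sym^m(X)$ parametrizing $m$ distinct points, over which $\pi$ is an étale $S_m$-cover and $\rho_m$ is an isomorphism — the form $\omega|$ descends to $U$ and pulls back to an $S_m$-invariant form on $\pi^{-1}(U) \subset X^m$; since $X^m$ is smooth and $\pi^{-1}(U)$ has complement of codimension $\ge 2$ (here $\dim X \ge 1$ is enough, but actually $\ge 1$ suffices since the big diagonal has codimension $\dim X \ge 1$ — wait, codimension $1$ is not enough; however $(\Omega^1_{X^m})^{\otimes n}$ is locally free and we need Hartogs, so we need the big diagonal to have codimension $\ge 2$, which holds when $\dim X \ge 1$: the big diagonal in $X^m$ has codimension exactly $\dim X$, so the hypothesis $d = \dim X > 1$ is used precisely here), this $S_m$-invariant form extends to all of $X^m$, giving an element $\eta \in H^0(X^m,(\Omega^1_{X^m})^{\otimes n})^{S_m}$ with $p^{*}\eta = \omega$ after identifying via $q$. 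Thus $q^{*}H^0(Y_m,\cdots) \subseteq p^{*}H^0(X^m,\cdots)^{S_m}$, and combined with the injectivity of $p^{*}$ we get the claimed inclusion $H^0(Y_m,(\Omega^1_{Y_m})^{\otimes n}) \hookrightarrow H^0(X^m,(\Omega^1_{X^m})^{\otimes n})^{S_m}$.

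The main obstacle is the extension (Hartogs-type) step: one must check carefully that the $S_m$-invariant form defined a priori only on the complement of the big diagonal in $X^m$ genuinely extends across that diagonal, and this is exactly where the hypothesis $\dim X > 1$ enters, via the codimension of the big diagonal being $\ge 2$. A secondary point to handle cleanly is the $S_m$-equivariance of the resolution $\widetilde Z$ and of the map $p$ — this can be arranged either by taking an $S_m$-equivariant resolution or, alternatively, by avoiding $\widetilde Z$ altogether: since $Y_m$ is smooth it has rational singularities, so one may instead normalize $X^m \times_{\Sym^m(X)} Y_m$ and argue directly, but the cleanest write-up uses equivariant resolution. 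Everything else is the routine functoriality of pullback of Kähler differentials and the already-cited birational invariance of global tensor forms.
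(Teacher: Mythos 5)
Your argument is correct and rests on the same mechanism as the paper's: restrict global tensor forms from $Y_m$ to the locus over $U_m \subset \Sym^m(X)$ (the open set of $m$-tuples of pairwise distinct points) where $\rho_m$ is an isomorphism, identify them with $S_m$-invariant forms on the \'etale $S_m$-cover $\pi^{-1}(U_m) \subset X^m$, and extend across the big diagonal by Hartogs/reflexivity using that its codimension equals $d = \dim X > 1$. The detour through the resolved fiber product $\widetilde Z$ is superfluous scaffolding --- the paper carries out exactly this chain of identifications directly on $Y_m$, $U_m$, and $\pi^{-1}(U_m)$ --- but you correctly locate where the hypothesis $\dim X > 1$ is used.
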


\begin{proof}

Let $U_m = X_{(1^m)} \subset \Sym^{m}(X)$ be an open subset of points with pairwise distinct coordinates and let $Z_m = \Sym^{m}(X) \, \backslash \, U_m$. Since $\dim(X) > 1$, it follows that $U_m$ is the smooth locus of $\Sym^m(X)$ and $\codim(Z_m) = d$. Over $U_m$, the map $\rho_m$ is an isomorphism and the map $\pi: X^m \to \Sym^{m}(X)$ is \'etale, hence
\begin{align*}
  H^{0}(Y_m, \, (\Omega^1_{Y_m})^{\otimes n}) & \subset  H^{0}( \rho_m^{-1}(U_m), \, (\Omega^1_{ \rho_m^{-1}(U_m)})^{\otimes n}) = H^{0}(U_m, \, (\Omega^1_{ U_m})^{\otimes n}) \\
   & = H^{0}(\pi^{-1}(U_m), \, (\Omega^1_{ \pi^{-1}(U_m)})^{\otimes n})^{S_m}
\end{align*}

It remains to note that $\codim(\pi^{-1}(Z_m)) = d > 1$, therefore
\[
H^{0}(\pi^{-1}(U_m), \, (\Omega^1_{ \pi^{-1}(U_m)})^{\otimes n})^{S_m} = H^0(X^m, (\Omega^1_{X^m})^{\otimes n})^{S_m}.
\]

\end{proof}

\begin{lemma}[{\cite[Theorem 1]{AA}}] \label{arapura}
    For any $m, n > 0$ such that $nd$ is even, there is an isomorphism
    \[
    H^0(Y_m, \omega_{Y_m}^{\otimes n}) \cong \Sym^m H^0(X, \omega_X^{\otimes n})
    \]
of vector spaces. 
    
\end{lemma}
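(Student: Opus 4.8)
The plan is to reduce the computation to the product $X^m$, refining the inclusion of Lemma \ref{inclusion} to an equality for the canonical bundle. Write $U_m = X_{(1^m)} \subset \Sym^m(X)$ for the open locus of points with pairwise distinct coordinates. Since $d = \dim X > 1$, this is precisely the smooth locus of $\Sym^m(X)$, its complement has codimension $d \ge 2$, and the chosen resolution $\rho_m \colon Y_m \to \Sym^m(X)$ is an isomorphism over $U_m$. The key input is that $\Sym^m(X)$ has \emph{canonical} singularities by \cite[Proposition~1]{AA}: this forces every pluricanonical $n$-form on $U_m$ to extend (uniquely, as $Y_m$ is irreducible and $\rho_m^{-1}(U_m)$ is dense) to a regular section of $\omega_{Y_m}^{\otimes n}$ over all of $Y_m$ — equivalently $H^0(Y_m, \omega_{Y_m}^{\otimes n}) = H^0\bigl(\Sym^m(X), \omega_{\Sym^m(X)}^{[n]}\bigr)$, the reflexive $n$-th power — while the reverse restriction map is tautological. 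Hence
\[
H^0(Y_m, \omega_{Y_m}^{\otimes n}) \;=\; H^0\bigl(U_m, \, \omega_{U_m}^{\otimes n}\bigr).
\]

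Next I would pass to $X^m$ through the quotient map $\pi \colon X^m \to \Sym^m(X)$. Over $U_m$ the $S_m$-action is free, so $\pi$ restricts to an étale $S_m$-torsor, and descent along this torsor gives
\[
H^0\bigl(U_m, \, \omega_{U_m}^{\otimes n}\bigr) \;\cong\; H^0\bigl(\pi^{-1}(U_m), \, \omega_{\pi^{-1}(U_m)}^{\otimes n}\bigr)^{S_m},
\]
the invariants taken for the natural geometric action on forms. Since $X^m$ is smooth and $X^m \setminus \pi^{-1}(U_m)$ has codimension $d \ge 2$, Hartogs' extension identifies the right-hand side with $H^0(X^m, \omega_{X^m}^{\otimes n})^{S_m}$. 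Finally, Künneth gives $H^0(X^m, \omega_{X^m}^{\otimes n}) \cong H^0(X, \omega_X^{\otimes n})^{\otimes m}$, and under this identification the geometric $S_m$-action becomes the permutation action on the $m$ tensor factors twisted by the character $\sigma \mapsto \operatorname{sgn}(\sigma)^{nd}$, the sign arising from reordering the $d$-dimensional blocks inside $\omega_{X^m} = \Lambda^{md}\Omega^1_{X^m}$. Because $nd$ is even the twist is trivial, so the $S_m$-invariants are exactly $\Sym^m H^0(X, \omega_X^{\otimes n})$ (for $nd$ odd one would instead obtain $\Lambda^m H^0(X, \omega_X^{\otimes n})$). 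Composing the three identifications proves the lemma.

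The étale descent, the Künneth/Hartogs step, and the sign computation are routine, though the sign bookkeeping must be done carefully since it is exactly where the parity hypothesis $nd$ even is consumed. The one genuinely substantial ingredient is the canonicity of $\Sym^m(X)$, which we import from \cite{AA}; without it the argument degrades to the inclusion ``$\subseteq$'' of Lemma \ref{inclusion}, so I expect this to be the main obstacle.
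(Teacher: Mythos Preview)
The paper does not give its own proof of this lemma; it is stated with the attribution \cite[Theorem~1]{AA} and used as a black box. Your argument is correct and is in fact the proof given by Arapura and Archava themselves: they first establish (\cite[Proposition~1]{AA}, which you invoke) that $\Sym^m(X)$ has canonical singularities, and then deduce the plurigenera formula via exactly the chain you describe --- extension of pluricanonical forms from the smooth locus, étale descent along $\pi^{-1}(U_m)\to U_m$, Hartogs on $X^m$, Künneth, and the sign computation that consumes the parity hypothesis on $nd$. So there is nothing to contrast; you have reconstructed the cited proof.
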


\begin{proposition} \label{invariants}

For any $m, i \geqslant 1$ we have
\[
H^0(Y_m, \Omega^i_{Y_m}) = H^0(X^m, \Omega^i_{X^m})^{S_m}.
\]
    
\end{proposition}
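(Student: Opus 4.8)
The plan is to prove both inclusions. The inclusion $H^0(Y_m, \Omega^i_{Y_m}) \subseteq H^0(X^m, \Omega^i_{X^m})^{S_m}$ follows from Lemma \ref{inclusion} (with $n = 1$), since $\Omega^i_{Y_m} \hookrightarrow (\Omega^1_{Y_m})^{\otimes i}$ via antisymmetrization and this inclusion is compatible with the corresponding one on $X^m$; alternatively one repeats verbatim the restriction-to-the-smooth-locus argument of Lemma \ref{inclusion} with $(\Omega^1)^{\otimes n}$ replaced by $\Omega^i$, using that over $U_m$ the map $\rho_m$ is an isomorphism, over $\pi^{-1}(U_m)$ the map $\pi$ is \'etale so $\Omega^i$ pulls back, and $\codim(Z_m) = \codim(\pi^{-1}(Z_m)) = d > 1$ lets us extend sections of $\Omega^i$ across the complement since $X^m$ is smooth (Hartogs / reflexivity of $\Omega^i_{X^m}$).

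For the reverse inclusion $H^0(X^m, \Omega^i_{X^m})^{S_m} \subseteq H^0(Y_m, \Omega^i_{Y_m})$, the plan is as follows. Decompose $i = 2a + b$ with $b \in \{0,1\}$ (or more simply handle $i$ even and $i$ odd by the same device), and use Lemma \ref{arapura}: a top-degree holomorphic form does extend to any resolution precisely when its degree times $\dim$ is even. Concretely, a section $\omega \in H^0(X^m, \Omega^i_{X^m})^{S_m}$ restricts to a section over $U_m \cong \rho_m^{-1}(U_m)$; to show it extends across the exceptional locus $E_m := \rho_m^{-1}(Z_m)$ of $\rho_m$ it suffices, since $Y_m$ is smooth and $\Omega^i_{Y_m}$ is locally free (hence reflexive), to bound the order of pole of $\omega$ along each component of $E_m$ by a valuative/discrepancy computation. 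The key input is that $\Sym^m(X)$ has \emph{canonical} singularities (\cite[Proposition 1]{AA}), which exactly says that the top form extends; one then leverages this degree-$md$ statement to control forms of intermediate degree $i$ by wedging with a local frame of $(md - i)$ forms, or by arguing on the open dense locus $U_m \cup (\text{codim } d \text{ strata})$ together with the fact that a reflexive sheaf on a smooth variety extends uniquely over a codimension $\ge 2$ closed set. The cleanest route is: by Lemma \ref{inclusion} the space $H^0(Y_m, (\Omega^1_{Y_m})^{\otimes i})$ equals $H^0(X^m, (\Omega^1_{X^m})^{\otimes i})^{S_m}$ (the inclusion is in fact an equality by the same $\codim = d > 1$ Hartogs argument applied on $X^m$), and the projector onto $\Omega^i$ inside $(\Omega^1)^{\otimes i}$ is $S_m$-equivariant and defined on $Y_m$, so it carries $H^0(X^m, \Omega^i_{X^m})^{S_m}$ into $H^0(Y_m, \Omega^i_{Y_m})$.

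The main obstacle I expect is verifying that the equality in Lemma \ref{inclusion} is genuinely an equality (not merely an inclusion) of $S_m$-invariants of $(\Omega^1)^{\otimes n}$ on $Y_m$ versus $X^m$ — i.e.\ that every invariant tensor on $X^m$ really does extend to the resolution $Y_m$, and not only to the smooth locus $U_m$. The subtle point is that, unlike on $X^m$, the sheaf $(\Omega^1_{Y_m})^{\otimes n}$ need not be reflexive in a way that makes Hartogs immediate across the $\rho_m$-exceptional divisor, so one must instead control poles along $E_m$. Here is where the canonicity of $\Sym^m(X)$ (Lemma \ref{arapura}'s hypothesis, via \cite[Proposition 1]{AA}) does the work: canonical singularities of a normal variety $V$ guarantee that reflexive differential forms on $V_{\mathrm{reg}}$ pull back to honest forms on any resolution (this is the extension theorem for differential forms on canonical, or more generally klt, singularities). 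Applying this with $V = \Sym^m(X)$ and the reflexive sheaf $(\Omega^{[1]}_V)^{\otimes n}$, whose sections over $V_{\mathrm{reg}} = U_m$ are exactly the $S_m$-invariant tensors on $X^m$ by the \'etale descent already used, yields the needed extension to $Y_m$, and restricting the codomain to the $\Omega^i$-summand finishes the proof. $\qed$
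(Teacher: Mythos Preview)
Your proposal contains the correct key idea --- that the canonicity of $\Sym^m(X)$ (via \cite[Proposition~1]{AA}) together with the extension theorem for differential forms on klt spaces is what makes invariant forms on $X^m$ extend to the resolution $Y_m$ --- and this is exactly what the paper does, packaging it as Lemma~\ref{klt_singularities} and invoking \cite[Theorem~1.4]{GKKP}. However, your execution takes an unnecessary and problematic detour.

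The gap is in your ``cleanest route'' and its justification in the final paragraph. You attempt to first prove the \emph{equality} $H^0(Y_m,(\Omega^1_{Y_m})^{\otimes n}) = H^0(X^m,(\Omega^1_{X^m})^{\otimes n})^{S_m}$ for tensor powers, and then project onto the $\Omega^i$-summand. To get that equality you invoke the extension theorem for ``the reflexive sheaf $(\Omega^{[1]}_V)^{\otimes n}$''. But the GKKP extension theorem is a statement about the sheaves $\Omega^{[p]}$ of reflexive $p$-forms, not about tensor powers of $\Omega^{[1]}$ (or their reflexive hulls). Extension of sections of $(\Omega^1)^{\otimes n}$ across the exceptional divisor of a resolution is a genuinely different and harder question; it is not implied by canonicity of the singularities, and indeed Lemma~\ref{inclusion} in the paper states only an inclusion for tensor powers, never an equality. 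So the step ``Applying this with \ldots\ the reflexive sheaf $(\Omega^{[1]}_V)^{\otimes n}$ \ldots\ yields the needed extension to $Y_m$'' does not follow from the cited input.

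The fix is simply to drop the tensor-power detour and apply the extension theorem directly to $\Omega^{[i]}$: since $\Sym^m(X)$ is klt, \cite[Theorem~1.4]{GKKP} gives that $(\rho_m)_*\Omega^i_{Y_m}$ is reflexive, hence coincides with the reflexive hull of $\Omega^i_{\Sym^m(X)}$; its global sections are then identified with $H^0(U_m,\Omega^i_{U_m}) = H^0(X^m,\Omega^i_{X^m})^{S_m}$ by the \'etale-descent and Hartogs arguments you already outlined. This is precisely the content of the paper's Lemma~\ref{klt_singularities}.
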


We deduce this proposition from the following more general result.

\begin{lemma} \label{klt_singularities}

Let $Z$ be a smooth connected complex quasi-projective variety with an action of a finite group $G$. Let $Y = Z/G$ and let $\pi: \widetilde{Y} \to Y$ be a resolution of singularities. If $Y$ has at most log terminal singularities and the locus $S = \{z \in Z \mid \Stab_G(z) \neq e \} \subset Z$ has codimension greater than one, then
\[
H^0(\widetilde{Y}, \Omega^i_{\widetilde{Y}})  = H^0(Z, \Omega_Z^i)^{G}
\]
for all $i \ge 1$\footnote{In fact, a similar argument shows that $H^0(\widetilde{Y}, \Omega^i_{\widetilde{Y}}) = H^0(Y, \Omega_{Y}^{[i]}) = H^0(Z, \Omega_Z^i)^{G}$, where $\Omega_{Y}^{[i]}$ is a sheaf of reflexive $i$-forms.}.

\end{lemma}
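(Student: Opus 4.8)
The plan is to sandwich the reflexive differential forms of $Y$ between the two sides of the claimed equality, using two standard inputs: sections of a reflexive sheaf on a variety extend across closed subsets of codimension at least two, and reflexive differential forms on klt varieties extend to any resolution.

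First I would identify $H^0(Z,\Omega^i_Z)^G$ with $H^0(Y,\Omega^{[i]}_Y)$, where $\Omega^{[i]}_Y \defeq j_\ast\Omega^i_{Y^{\mathrm{sm}}}$ for $j\colon Y^{\mathrm{sm}}\hookrightarrow Y$ the inclusion of the smooth locus (note $Y=Z/G$ is normal). Let $U\subset Y$ be the image of $Z\setminus S$. On $Z\setminus S$ the group $G$ acts freely (if $gz=z$ with $z\notin S$ then $g\in\Stab_G(z)=e$), so $Z\setminus S\to U$ is a finite étale Galois cover with group $G$ and $U$ is smooth; in particular $U\subset Y^{\mathrm{sm}}$. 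Since the quotient map $q\colon Z\to Y$ is finite and $Z$ (hence $Y$) is irreducible, $Y\setminus U=q(S)$ has $\codim_Y(Y\setminus U)=\codim_Z(S)>1$, and consequently $Y^{\mathrm{sm}}\setminus U$ has codimension at least two in $Y^{\mathrm{sm}}$. Because $\Omega^i_Z$ is locally free, hence reflexive, restriction gives a $G$-equivariant isomorphism $H^0(Z,\Omega^i_Z)\xrightarrow{\ \sim\ }H^0(Z\setminus S,\Omega^i_{Z\setminus S})$, and étale descent along $Z\setminus S\to U$ identifies $H^0(Z\setminus S,\Omega^i_{Z\setminus S})^G$ with $H^0(U,\Omega^i_U)$. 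Finally, since $Y^{\mathrm{sm}}$ is smooth and $Y^{\mathrm{sm}}\setminus U$ has codimension at least two, restriction gives $H^0(Y^{\mathrm{sm}},\Omega^i_{Y^{\mathrm{sm}}})=H^0(U,\Omega^i_U)$, which is $H^0(Y,\Omega^{[i]}_Y)$ by definition. Chaining these isomorphisms yields $H^0(Z,\Omega^i_Z)^G=H^0(Y,\Omega^{[i]}_Y)$.

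Second, I would invoke the extension theorem for differential forms: since $Y$ has log terminal singularities, for the resolution $\pi\colon\widetilde Y\to Y$ the natural pullback of forms induces an isomorphism $\pi_\ast\Omega^i_{\widetilde Y}\cong\Omega^{[i]}_Y$ for every $i$ (reflexive differentials extend over klt singularities, by Greb--Kebekus--Kov\'{a}cs--Peternell); taking global sections gives $H^0(\widetilde Y,\Omega^i_{\widetilde Y})=H^0(Y,\Omega^{[i]}_Y)$, and combining with the previous step finishes the proof. (Alternatively, one can observe that $Y=Z/G$ has quotient singularities and use the older fact that holomorphic forms extend across quotient singularities.) The first step is routine codimension-two bookkeeping; the substantive input is this extension theorem, and the main thing to be careful about is that its hypotheses are genuinely met here — $Y$ normal and klt, $\pi$ a bona fide resolution. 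The assumption $\codim S>1$ is exactly what makes the first step work: a divisorial component of $S$ would introduce ramification that breaks the comparison between $G$-invariant forms on $Z$ and reflexive forms on $Y$.
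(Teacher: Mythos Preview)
Your proof is correct and rests on the same substantive input as the paper's---the extension theorem of Greb--Kebekus--Kov\'acs--Peternell (\cite[Theorem~1.4]{GKKP}) asserting that $\pi_\ast\Omega^i_{\widetilde{Y}}$ is reflexive when $Y$ is klt---but the packaging is somewhat more direct. The paper builds an explicit sheaf map $\varphi\colon \pi_\ast\Omega^i_{\widetilde{Y}}\to (q_\ast\Omega^i_Z)^G$ by introducing an auxiliary $G$-equivariant resolution $\widetilde{Z}$ of the fiber product $\widetilde{Y}\times_Y Z$, and then invokes GKKP twice (Theorem~1.4 for the source, Lemmas~A.3 and A.4 for the target) to see that both sides are reflexive and hence that $\varphi$, being an isomorphism over the free locus $U/G$, is an isomorphism everywhere. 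You instead identify each side separately with $H^0(Y,\Omega^{[i]}_Y)$: the right-hand side by elementary Hartogs-type extension and \'etale descent over the free locus, and the left-hand side by the GKKP extension theorem alone. This bypasses the auxiliary resolution $\widetilde{Z}$ and the appendix lemmas from \cite{GKKP}, at the modest cost of working only at the level of global sections rather than producing a sheaf isomorphism on $Y$; for the lemma as stated that is all that is needed, and your version is arguably cleaner.
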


\begin{proof}

Let $U = Z \, \backslash \, S$. 
Consider a commutative diagram
\begin{equation*}
\begin{tikzcd}
\widetilde{Z} \arrow{d}{q'} \arrow{r}{\pi'}  & Z \arrow{d}{q}
\\
\widetilde{Y} \arrow{r}{\pi} & Y
\end{tikzcd}
\end{equation*}
where $q: Z \to Y$ is the quotient map and $\widetilde{Z}$ is a $G$-equivariant resolution of singularities of the fiber product $\widetilde{Y} \times_{Y} Z$. There is a natural map
\[
\Omega^i_{\widetilde{Y}} \to (q'_{\ast} \Omega^i_{\widetilde{Z}})^{G}, 
\]
which is an isomorphism over $\pi^{-1}(U/G) \cong U/G$ (since $U/G$ is smooth and $U \to U/G$ is \'etale). Therefore, we have a map
\[
\varphi: \pi_{\ast} \Omega^i_{\widetilde{Y}} \to \pi_{\ast}((q'_{\ast} \Omega^i_{\widetilde{Z}})^{G}) = ((\pi \circ q')_{\ast}(\Omega^i_{\widetilde{Z}}))^{G} = (q_{\ast} (\pi'_{\ast}\Omega^i_{\widetilde{Z}}))^G = (q_{\ast} \Omega^i_Z)^G
\]
which is an isomorphism over $U/G$. The sheaf $\pi_{\ast} \Omega^{i}_{\widetilde{Y}}$ is coherent and reflexive by \cite[Theorem 1.4]{GKKP}, and the sheaf $(q_{\ast} \Omega^i_Z)^G$ is coherent and reflexive by \cite[Lemmas A.3, A.4]{GKKP}. Since $Y$ is normal and $\codim(S/G) \ge 2$, it follows that $\varphi$ is an isomorphism (\cite[\href{https://stacks.math.columbia.edu/tag/0EBJ}{Tag 0EBJ}]{Stacks}), so
\[
H^0(\widetilde{Y}, \Omega^i_{\widetilde{Y}}) = H^0(Y, \pi_{\ast} \Omega^i_{\widetilde{Y}}) = H^0(Y, (q_{\ast} \Omega^i_Z)^G) = H^0(Z, \Omega^i_Z)^{G}.
\]
    
\end{proof}

\begin{proof}[Proof of Proposition \ref{invariants}.]

By \cite[Proposition 1]{AA}, $\Sym^m(X)$ has canonical singularities for all $m$, so the statement follows from Lemma \ref{klt_singularities}.
    
\end{proof}

\begin{lemma} \label{boundedness}

For any fixed $n$ and $i$ the sequence $(|\dim{H^0(Y_m, \Psi^n \Omega^i_{Y_m})}|)_{m > 0}$ is bounded.
\end{lemma}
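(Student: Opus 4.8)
The plan is to express the virtual bundle $\Psi^n\Omega^i_{Y_m}$ through honest bundles built out of $\Omega^1_{Y_m}$, and then to bound the global sections of those bundles uniformly in $m$ by transferring the question to $X^m$ via Lemma \ref{inclusion}.

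\emph{First, I would reduce to bounding $h^0(Y_m,(\Omega^1_{Y_m})^{\otimes N})$.} By Newton's identity relating the Adams operation $\Psi^n$ to the exterior power operations in the $\lambda$-ring $\overline{K}(Y_m)$, one has $\Psi^n\Omega^i_{Y_m} = N_n(\lambda^1[\Omega^i_{Y_m}],\dots,\lambda^n[\Omega^i_{Y_m}])$, where $N_n$ is the universal Newton polynomial with integer coefficients, isobaric of weight $n$ when $\lambda^j$ is given weight $j$. Since $\lambda^j[\Omega^i_{Y_m}] = [\Lambda^j\Omega^i_{Y_m}]$ and the ring multiplication of $\overline{K}(Y_m)$ is the tensor product, this writes $\Psi^n\Omega^i_{Y_m}$ as a fixed finite integer combination $\sum_\alpha d_\alpha[B_{\alpha,m}]$ of classes of genuine vector bundles $B_{\alpha,m} = \bigotimes_j(\Lambda^j\Omega^i_{Y_m})^{\otimes\alpha_j}$ with $\sum_j j\alpha_j = n$, where the index set and the integers $d_\alpha$ depend only on $n$ and $i$, not on $m$. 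In characteristic zero each exterior power $\Lambda^p$ is functorially a direct summand of the $p$-th tensor power, so $\Omega^i_{Y_m} = \Lambda^i\Omega^1_{Y_m}$ is a summand of $(\Omega^1_{Y_m})^{\otimes i}$ and hence each $B_{\alpha,m}$ is a direct summand of $(\Omega^1_{Y_m})^{\otimes ni}$. As $\dim H^0(Y_m,-)$ is additive on $\overline{K}(Y_m)$ and $0 \le h^0(Y_m,B_{\alpha,m}) \le h^0(Y_m,(\Omega^1_{Y_m})^{\otimes ni})$, this yields
\[
\bigl|\dim H^0(Y_m,\Psi^n\Omega^i_{Y_m})\bigr| \ \le\ \Bigl(\textstyle\sum_\alpha|d_\alpha|\Bigr)\, h^0\bigl(Y_m,(\Omega^1_{Y_m})^{\otimes ni}\bigr),
\]
so it suffices to bound $h^0(Y_m,(\Omega^1_{Y_m})^{\otimes N})$ uniformly in $m$, for each fixed $N$ (we will apply this with $N = ni$).

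\emph{Next, I would bound $h^0(Y_m,(\Omega^1_{Y_m})^{\otimes N})$.} By Lemma \ref{inclusion} it is at most $\dim H^0(X^m,(\Omega^1_{X^m})^{\otimes N})^{S_m}$. Using $\Omega^1_{X^m}\cong\bigoplus_{a=1}^m\pr_a^*\Omega^1_X$, expanding the tensor power and applying the Künneth formula gives an $S_m$-equivariant decomposition
\[
H^0\bigl(X^m,(\Omega^1_{X^m})^{\otimes N}\bigr) \ \cong\ \bigoplus_{f\colon\{1,\dots,N\}\to\{1,\dots,m\}}\ \bigotimes_{a=1}^m H^0\bigl(X,(\Omega^1_X)^{\otimes|f^{-1}(a)|}\bigr),
\]
with $S_m$ permuting the summands through its action on the set of functions $f$. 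The orbits of this action correspond to set partitions of $\{1,\dots,N\}$ into at most $m$ blocks, so there are only finitely many of them, independently of $m$; and for a permutation module of this shape the invariants of each orbit-summand inject into the summand attached to a chosen representative $f_0$, whose dimension is $\prod_B h^0(X,(\Omega^1_X)^{\otimes|B|})$, the product being over the blocks $B$ of the corresponding partition. Summing over all set partitions $\lambda$ of $\{1,\dots,N\}$ (which only adds terms, hence still bounds every $m$) gives
\[
\dim H^0\bigl(X^m,(\Omega^1_{X^m})^{\otimes N}\bigr)^{S_m} \ \le\ \sum_{\lambda\,\vdash\,\{1,\dots,N\}}\ \prod_{B\in\lambda} h^0\bigl(X,(\Omega^1_X)^{\otimes|B|}\bigr),
\]
a finite quantity depending only on $X$ and $N$. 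Combining this with the previous display finishes the proof.

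The only point requiring care is this last step: one must exploit the $S_m$-invariance in an essential way, since $\dim H^0(X^m,(\Omega^1_{X^m})^{\otimes N})$ itself may grow with $m$ (for instance polynomially, when $h^0(X,\Omega^1_X)>0$), whereas its $S_m$-invariant part stabilizes once $m\ge N$. Everything else — Newton's identity, the splitting of exterior powers off tensor powers in characteristic zero, the Künneth formula, and the elementary description of the invariants of a permutation representation — is routine.
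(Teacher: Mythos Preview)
Your proof is correct and follows essentially the same approach as the paper: reduce via Newton's identities and the splitting of exterior powers to bounding $h^0(Y_m,(\Omega^1_{Y_m})^{\otimes N})$, then apply Lemma~\ref{inclusion} and analyze the $S_m$-invariants on $X^m$ via K\"unneth. The only difference is that where the paper defers the final step to \cite[Proposition~7.5]{LL2}, you have spelled out the combinatorics of the permutation module explicitly; your argument there is accurate (the stabilizer of a function $f$ fixes its image pointwise and hence acts trivially on the corresponding K\"unneth summand, so the orbit-invariants are indeed identified with, not merely injected into, the summand at a chosen representative).
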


\begin{proof}

Since $\Omega^i_{Y_m}$ is a direct summand of $(\Omega^1_{Y_m})^{\otimes i}$ and the Adams operation $\Psi^n$ is a polynomial in the operations $\lambda^1, \ldots, \lambda^n$, it follows that $[\Psi^n \Omega^i_{Y_m}] = \sum_{j=1}^{M} \pm[\cE_j]$ in $\overline{K}(X)$, where $M$ depends only on $n$ and each $\cE_j$ is a direct summand of $(\Omega^1_{Y_m})^{\otimes N}$ for some $N = N(n, i)$. It is therefore enough to prove that $(\dim{H^0(Y_m, (\Omega^1_{Y_m})^{\otimes N})})_{m > 0}$ is bounded. Moreover, by Lemma \ref{inclusion}, we only need to prove the boundedness of
\[
(\dim{H^0(X^m, (\Omega^1_{X^m})^{\otimes N})^{S_m}})_{m > 0}.
\]
This is straightforward and is explained in \cite[Proposition 7.5]{LL2}.

\end{proof}

We will also use the following rationality criterion for a formal powers series with coefficients in $F \defeq \Frac(\ZZ[G])$, where $G$ is a free abelian group.

\begin{lemma}[{\cite[Theorem 2.9]{LL2}}] \label{rationality_of_power_series}

Let $f(t) = \sum_{i \geqslant 0} g_i t^i$ be a power series in $F[[t]]$ with $g_i \in G$ for all $i$. Then $f(t)$ is rational if and only if there exist $p > 0, i_0 \in \NN$, and a sequence $h_1, h_2, h_3, \ldots \in G$ periodic with
period $p$ such that for $i > i_0$ we have $g_{i+p} = h_i g_i$.

\end{lemma}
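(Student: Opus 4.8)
The plan is to dispatch the ``if'' direction first, since it is elementary. Suppose $g_{i+p}=h_ig_i$ for $i>i_0$ with $(h_i)$ periodic of period $p$. I would split $f(t)=\sum_{i\le i_0}g_it^i+\sum_{i>i_0}g_it^i$; the first sum is a polynomial. Group the second sum according to the residue $r$ of $i$ modulo $p$, and for $r\in\{0,\dots,p-1\}$ let $i_r$ be the least index $>i_0$ with $i_r\equiv r$. Since $(h_i)$ is constant, say equal to $h^{(r)}$, on the class $r$, the relation $g_{i+p}=h_ig_i$ gives $g_{i_r+kp}=(h^{(r)})^kg_{i_r}$, so $\sum_{k\ge0}g_{i_r+kp}t^{i_r+kp}=g_{i_r}t^{i_r}/(1-h^{(r)}t^p)$, a rational function whose denominator is invertible in $F[[t]]$ because its constant term is $1$. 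Summing over $r$ and adding the polynomial part shows $f(t)$ is rational.

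\textbf{The ``only if'' direction: reduction to finitely generated $G$.} Now assume $f(t)$ is rational. Clearing denominators I may write $f=P/Q$ with $P,Q\in\ZZ[G][t]$, $Q(0)\ne0$, and $Qf=P$ holding in $\ZZ[G][[t]]$ (note $f\in\ZZ[G][[t]]$ since every $g_i\in G$). Only finitely many elements of $G$ occur in the coefficients of $P$ and $Q$, so these lie in $\ZZ[G_1][t]$ for a finitely generated subgroup $G_1\subseteq G$; hence $f\in\Frac(\ZZ[G_1])[[t]]$ and every $g_i$ lies in $\Frac(\ZZ[G_1])\cap G$, intersection taken in $\Frac(\ZZ[G])$. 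Choose a finite-rank free direct summand $G_2$ of $G$ containing $G_1$, with complement $G_3$; since $\ZZ[G]$ is free over $\ZZ[G_2]$ on the basis $G_3$, an element of $G$ lying in $\Frac(\ZZ[G_2])$ must lie in $G_2$. Thus all $g_i\in G_2$, and I may replace $G$ by $G_2$ and assume $G\cong\ZZ^N$.

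\textbf{Reduction to rank one.} For a homomorphism $\phi\colon\ZZ^N\to\ZZ$, applying the induced ring map $\ZZ[\phi]\colon\ZZ[G]\to\ZZ[x^{\pm1}]$ coefficientwise to $Qf=P$ shows $\sum_i\ZZ[\phi](g_i)\,t^i$ is rational over $\Frac(\ZZ[x^{\pm1}])$ provided $\ZZ[\phi](Q(0))\ne0$, which holds whenever $\phi$ is injective on the finite support of $Q(0)$ -- a condition avoiding only finitely many hyperplanes in $\Hom(\ZZ^N,\ZZ)$. So I can pick $N$ linearly independent such $\phi^{(1)},\dots,\phi^{(N)}$. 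Writing $g_i=x^{\mathbf n_i}$ with $\mathbf n_i\in\ZZ^N$ and granting the rank-one case, each $(\phi^{(m)}(\mathbf n_i))_i$ is eventually ``quasi-arithmetic'': there is $p_m$ with $\phi^{(m)}(\mathbf n_{i+p_m})-\phi^{(m)}(\mathbf n_i)$ depending only on $i\bmod p_m$ for $i$ large. With $p=\mathrm{lcm}(p_1,\dots,p_N)$ the same holds with $p_m$ replaced by $p$, and since $(\phi^{(m)})_m$ are linearly independent the map $\ZZ^N\to\ZZ^N$ they define is injective, so $\mathbf n_{i+p}-\mathbf n_i$ itself depends only on $i\bmod p$ for $i$ large; taking $h_i=x^{\mathbf n_{i+p}-\mathbf n_i}$ produces the required periodic sequence.

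\textbf{The rank-one case, and the main obstacle.} It remains to treat $G\cong\ZZ$, so $g_i=x^{n_i}$ and $f(t)=\sum x^{n_i}t^i$ is rational over $\QQ(x)$. Then $(g_i)$ satisfies a linear recurrence over $\QQ(x)$; passing to subsequences $i=r+pk$ for a suitable $p$ (divisible by the orders of those ratios of characteristic roots which are roots of unity -- finitely many, as the roots of unity in a finitely generated field of characteristic zero form a finite group, and absorbing likewise a further passage that makes the \emph{residues} of the roots non-degenerate) reduces to the non-degenerate case, so $g_{r+pk}=\sum_l Q_l(k)\beta_l^k$ for $k$ large, with the $\beta_l\in\overline{\QQ(x)}^\times$ distinct, no ratio a root of unity, and $Q_l\ne0$. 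I would then use the valuation $v=\mathrm{ord}_{x=\infty}$: one has $v(g_{r+pk})=-n_{r+pk}$, while $v(Q_l(k))$ is constant for $k$ large (cancellation among the dominant coefficients of $Q_l$ occurs only at the finitely many integer roots of a fixed polynomial), so each $v(Q_l(k)\beta_l^k)$ is affine in $k$ and for $k$ large $\min_l v(Q_l(k)\beta_l^k)=a(k)$ is a single affine function. Then $v(g_{r+pk})\ge a(k)$, with equality unless the leading parts of the minimizing terms cancel; by linear independence of $k\mapsto k^j\zeta^k$ this cancellation is either impossible or identical, and identical cancellation only drops $v(g_{r+pk})$ to the next affine level, a descent that terminates since $g_{r+pk}=x^{n_{r+pk}}\ne0$. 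Hence $n_{r+pk}=-v(g_{r+pk})$ is eventually affine in $k$, i.e.\ $n_{i+p}-n_i$ depends only on $i\bmod p$ for $i$ large, as needed. The hard part is precisely this last step -- that the valuations of a non-degenerate linear recurrence sequence over $\QQ(x)$ are eventually quasi-linear -- where the real care lies in controlling the cancellations among leading coefficients (at the original and the residue level) and in seeing that the descent through valuation levels terminates; the easy direction and the two reductions above are purely formal.
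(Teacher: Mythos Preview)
The paper does not give its own proof of this lemma: it is quoted verbatim from \cite[Theorem~2.9]{LL2} and used as a black box. So there is nothing to compare your argument against in the present paper; what one can do is check whether your sketch is complete.

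Your ``if'' direction and the two reductions are fine. The reduction to finitely generated $G$ is clean (the point that $\ZZ[G]$ is free over $\ZZ[G_2]$ on the basis $G_3$, so $G\cap\Frac(\ZZ[G_2])=G_2$, is exactly right), and the reduction from $\ZZ^N$ to $\ZZ$ via $N$ independent homomorphisms avoiding the support of $Q(0)$ is also correct.

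The gap is in the rank-one step, specifically in the termination of your valuation descent. You argue that if the residues of the dominant terms cancel identically, you ``drop to the next affine level,'' and that this descent terminates because $g_{r+pk}=x^{n_{r+pk}}\ne 0$. But nonvanishing of $g_{r+pk}$ only tells you that $v(g_{r+pk})$ is finite for each individual $k$; it does \emph{not} tell you that the descent procedure stabilizes after finitely many steps uniformly in $k$. Concretely, the obstruction is that two distinct characteristic roots $\beta_l,\beta_{l'}$ with the same valuation can have \emph{equal} residues $\bar u_l=\bar u_{l'}$, and passing to a further arithmetic subsequence (replacing $\beta$'s by their $p'$-th powers) does nothing to separate them. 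Your parenthetical ``making the residues of the roots non-degenerate'' handles only the case where residue ratios are nontrivial roots of unity; it cannot force coinciding residues apart. Once residues coincide, the depth of cancellation at the leading level is governed by the valuation of a sum like $\sum_{l}c_l(k)u_l^k$ with $u_l$ units and coinciding residues, and this valuation need not be eventually affine in $k$. For example, over a complete discretely valued field with uniformizer $\pi$, the sequence $(1+\pi)^k-1^k$ has valuation $1+v(k)$, which is unbounded and certainly not eventually affine---so a na\"ive level-by-level descent does not terminate. Nothing in your outline explains why the extra hypothesis that each $g_i$ is a single group element (a monomial $x^{n_i}$, not an arbitrary nonzero element of $\ZZ[x^{\pm1}]$) excludes this kind of behaviour; but that hypothesis is precisely what must be exploited, and the argument in \cite{LL2} does exploit it in an essential way. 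As written, your rank-one case is incomplete.
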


\begin{proof}[Proof of Theorem \ref{irrationality_result}] Let $F = \Frac{(\ZZ[M])}$ and let ${\mu}'_k: K_0(\cV_{\CC}) \to F$ be the composition of $\mu_k$ with the natural inclusion $\ZZ[M] \to F$. We will prove that there exists some $k$ such that $Z_{{\mu}'_k}(X, t) \in F[[t]]$ is not rational.

By Corollary \ref{class_of_resolution_coincide}, we have $[Y_m] = [\Sym^m(X)]$ in $K_0(\cV_{\CC})/(\LL)$. Thus one has ${\mu}'_k(Y_m) = {\mu}'_k(\Sym^m(X))$ for all $k \ge 1$, so it is enough to prove that
\[
f_{X}(t) \defeq 1+ \sum_{i = 1}^{\infty} {\mu}'_k(Y_m) t^m \in F[[t]]
\]
is not rational for some $k \ge 1$.

We are going to treat parts (a) and (b) separately. First assume that there exists $n \geqslant 1$ such that $H^0(X, \omega_X^{\otimes n}) \neq 0$. Moreover, we can assume that $n$ is even (since if $s$ is a nonzero global section of $\omega_X^{\otimes n}$, then $s \otimes s$ is a nonzero global section of $\omega_X^{\otimes 2n}$). By Lemma \ref{arapura}, we have 
\[
h_n^{md}(Y_m) = \dim{(H^0(Y_m, \omega_{Y_m}^{\otimes n}))} =\dim{(\Sym^m H^0(X, \omega_X^{\otimes n}))} \neq 0,
\]
therefore ${\mu}'_n(Y_m)$ is a polynomial with constant term $1$ and leading term $h_n^{md}(Y_m) s^{md}$.

Now assume that $H^0(X, \Omega_X^{2i}) \neq 0$ for some $i > 0$. By K\"unneth formula, for every $j > 0$ there is an isomorphism
\begin{equation} \label{Kunneth}
H^0(X^m, \Omega^j_{X^m}) \cong \bigoplus_{i_1 + \ldots + i_m = j} H^0(X, \Omega_X^{i_1}) \otimes \cdots \otimes H^0(X, \Omega_X^{i_m}).
\end{equation}

Denote by $\pi_l: X^m \to X$ the projection to the $l$-th factor. Note that the embedding
\[
H^0(X, \Omega^{2i})^{\otimes m} \hookrightarrow H^0(X^m, \Omega^{2im}_{X^m}), \quad s_1 \otimes \cdots \otimes s_m \mapsto \pi^{\ast}_1(s_1) \wedge \cdots \wedge \pi^{\ast}_m(s_m) 
\]
induced from the K\"unneth isomorphism (\ref{Kunneth}) is $S_m$-equivariant because $2i$ is even. Therefore we have an inclusion
\[
\Sym^m H^0(X, \Omega^{2i}) \subset H^0(X^m, \Omega^{2im}_{X^m})^{S_m} = H^0(Y_m, \Omega^{2im}_{Y_m}) = h_1^{2im}(Y_m),
\]
where the middle equality follows from Proposition \ref{invariants}. Since $H^0(X, \Omega^{2i}) \neq 0$ by assumption, it follows that $h_1^{2im}(Y_m) \neq 0$, and hence $\mu'_1(Y_m)$ is a polynomial of degree at least $2im$ and no more than $dm$.

The rest of the argument works for parts (a) and (b) simultaneously. Denote by $G$ a group completion of $M$, and let $g_m \defeq {\mu}'_k(Y_m) \in M$, where we take $k = n$ if $X$ satisfies the condition of part $(a)$, and $k=1$ if $X$ satisfies the condition of part $(b)$. In both cases, we have just proved that $2mC \le \deg(g_m) \le md$ for some constant $C > 0$.

Assume $f_X(t)$ is rational. Then by Lemma \ref{rationality_of_power_series} there exist $p,i_0 \geqslant 1$ and $h \in G$ such that
\[
g_{i_0+jp} = h^jg_{i_0}
\]
for all $j \geqslant 0$. The element $h$ is a rational function in $t$. Since all $g_m$ are non-zero polynomials in $t$ with constant term $1$, it follows that $h$ is also a non-zero polynomial with constant term $1$. Since the degrees of $g_m$ grow, $h$ has degree at least $1$. Let $r \in \ZZ_{>0}$ be the smallest positive integer such that the coefficient of degree $r$ in $h$ is not zero. Then the coefficient of degree $r$ in $g_{i_0+jp}$ is not bounded when $j \to \infty$. This is a contradiction with Lemma \ref{boundedness}.

\end{proof}

\begin{remark}
For part (a) of Theorem \ref{irrationality_result}, it is in fact enough to assume that $X$ is over an arbitrary field $K$ of characteristic zero. Indeed, using that $\Sym^n(X_{\overline{K}}) = \Sym^n(X)_{\overline{K}}$, one reduces the proof to the case $K = \overline{K}$. One then checks that the proofs of all the lemmas which are used for part (a) work over such $K$.
\end{remark}

\begin{remark}

The converse of Theorem \ref{irrationality_result} does not hold if $\dim(X) > 2$ (even if we replace $K_0(\cV_K)$ with $K_0(\cV_K)/(\LL)$). Indeed, if $Y$ is a smooth quintic threefold and $X = Y \times \PP^1$, then $Z(X, t) \equiv Z(Y, t) \; (\mmod{(\LL)})$ is not rational (since $\kappa(Y) = 0$), while Kodaira dimension of $X$ is negative and $H^0(X, \Omega^{2i}_X) = 0$ for $i > 0$.

\end{remark}

\vspace{0.3cm}

\section{Some applications} \label{sec:applications}

In this section we list some applications of the results of previous sections.

For an irreducible variety $X$ over $K$ we denote by $\langle X \rangle$ its class in $\ZZ[\SB/K]$. Note that if $X$ is stably birational to $Y$, then $\Sym^n(X)$ is stably birational to $\Sym^n(Y)$ for all $n > 0$. Indeed, it is enough to show that $\Sym^n(X \times \AA^1)$ is birational to  $\Sym^n(X) \times \AA^n$, which follows from the fact the the projection $\rho_1: \Sym^n(X \times \AA^1) \to \Sym^n(X)$ is a locally trivial fibration over $X_{(1^n)} \subset \Sym^d(X)$ (see Notation \ref{decompositon_of_sym}) with fiber $\AA^n$ by \cite[Lemma 4.4]{Got}.

Hence, we can define a formal power series:
\[
Z_{\SB}(X, t) \defeq \sum_{n \geqslant 0} \langle \Sym^n(X) \rangle t^n \in 1 + t \ZZ[\SB/K] [[t]].
\]

Recall that by Corollary \ref{class_of_resolution_coincide} one has $[\widetilde{\Sym}^n(X)] = [\Sym^n(X)]$ in $K_0(\cV_K)/(\LL)$. Therefore, under the isomorphism $\Phi: K_0(\cV_K)/(\LL) \overset{\sim}{\longrightarrow} \ZZ[\SB/K]$ from Theorem \ref{Lar_Lun} one has
\[\Phi([\Sym^n(X)]) = \langle \Sym^n(X) \rangle
\]
for every smooth connected projective variety $X$ and every $n > 0$. In particular, for such $X$ the image of $Z(X, t)$ in $(K_0(\cV_K)/(\LL))[[t]]$ can be identified with $Z_{\SB}(X, t)$  using the isomorphism $\Phi$.


    


Since measures $\mu_n$ used in the proof of Theorem \ref{irrationality_result} factor through $K_0(\cV_{K})/(\LL)$, the proof also shows the irrationality of $Z_{\SB}(X, t)$ (as a formal powers series with coefficients in $ \ZZ[\SB/K]$) for $X$ as in the statement of the theorem. While it looks almost impossible to us to give a reasonable rationality criterion for $Z(X, t)$ itself in higher dimensions, we hope that one can say more about the rationality of $Z_{\SB}(X, t)$. For example, it is easy to compute $Z_{\SB}(X, t)$ for Severi-Brauer varieties.


    




    

    



\begin{proposition} \label{ratinoalinquotient}

Let $B$ be a Severi-Brauer variety of index $d = i(B)$ over $K$. We have
\begin{equation} \label{zeta_function_modulo_L}
Z_{\SB}(B, t) = \frac{1}{1-t^d} \sum_{i=0}^{d-1} \langle \Sym^{i}(B) \rangle t^i.
\end{equation}
In particular, the image of $Z(B, t)$ in $(K_0(\cV_{K})/(\LL))[[t]]$ is a rational function.

\end{proposition}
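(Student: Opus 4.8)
The plan is to compute the stable birational type of $\Sym^i(B)$ for a Severi--Brauer variety $B$ of index $d$ over $K$, and to show that it is periodic in $i$ with period $d$ once $i \ge d$ (and more precisely that $\langle \Sym^{i+d}(B)\rangle = \langle \Sym^i(B)\rangle$ for all $i \ge 0$). Granting this, the power series identity
\[
Z_{\SB}(B,t) = \sum_{i \ge 0} \langle \Sym^i(B)\rangle t^i = \Bigl(\sum_{j \ge 0} t^{jd}\Bigr) \sum_{i=0}^{d-1} \langle \Sym^i(B)\rangle t^i = \frac{1}{1-t^d}\sum_{i=0}^{d-1}\langle \Sym^i(B)\rangle t^i
\]
follows immediately, and rationality of the image of $Z(B,t)$ in $(K_0(\cV_K)/(\LL))[[t]]$ is then a consequence of the identification of that image with $Z_{\SB}(B,t)$ via $\Phi$ (using Corollary \ref{class_of_resolution_coincide} as explained in the preamble to this section), since $B$ is smooth connected and projective.

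The key input is the structure of symmetric powers of Severi--Brauer varieties. Recall that if $B = \SB(A)$ for a central simple $K$-algebra $A$ of degree $d$, then for $0 \le i \le d$ the symmetric power $\Sym^i(B)$ is a Brauer--Severi variety of a generalized (twisted) Grassmannian type: concretely, $\Sym^d(B)$ has a $K$-rational point (it is the ``reduced norm'' locus, i.e. $\Sym^d(B) \cong \PP^{\binom{d+?}{?}}\!$-type rational variety — more precisely $\Sym^d(B)$ is $K$-rational because $\operatorname{Nrd}$ cuts out a linear system with a rational point), so $\langle \Sym^d(B)\rangle = 1$. The general principle I would invoke is that $B^{\times d}$ maps to $\Sym^d(B)$ and the class of $A$ in $\Br(K)$ has order dividing $d$; geometrically, $B \times \Sym^{d-1}(B)$ dominates $\Sym^d(B)$, which forces a relation that shifts the index by $d$. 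I would establish the periodicity $\langle\Sym^{i+d}(B)\rangle = \langle\Sym^i(B)\rangle$ by exhibiting a birational (or stably birational) equivalence $\Sym^{i+d}(B) \sim_{\mathrm{sb}} \Sym^i(B) \times \Sym^d(B)$, combined with $\langle \Sym^d(B)\rangle = 1$; the product decomposition should come from the locally closed stratification of $\Sym^{i+d}(B)$ by partition type (Notation \ref{decompositon_of_sym}), picking out the open stratum where $d$ of the points collide to a single fat point lying in a ``generic'' position, which trivializes one copy of the Brauer class.

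The main obstacle I expect is making precise the claim $\langle\Sym^{i+d}(B)\rangle = \langle\Sym^i(B)\rangle$, i.e. controlling the stable birational type of $\Sym^{i+d}(B)$ in terms of lower symmetric powers. The naive stratification of $\Sym^{i+d}(B)$ does not obviously split off a factor of $\Sym^d(B)$ as a direct product, so I would instead argue at the level of function fields: pass to a splitting field, or rather work directly with the generic point. Over the generic point of $\Sym^d(B)$ — which, crucially, carries a point of $B$ of degree $d$ over the base, hence splits $A$ after a degree-$d$ extension — the Brauer obstruction for the ``remaining'' $i$ points collapses, so the fiber of $\Sym^{i+d}(B) \dashrightarrow \Sym^d(B)$ over this generic point is (stably) birational to $\Sym^i(\PP^{d-1})$, which is rational. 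Running this through the standard dévissage for Brauer--Severi varieties (the index of $A$ over $K(\Sym^d(B))$ drops, and $\Sym^i$ of a trivial Brauer--Severi variety is rational) gives the periodicity. An alternative, and perhaps cleaner, route is to use Krashen's or similar results computing $K_0$ or the birational type of symmetric powers of Severi--Brauer varieties directly; if such a reference is available it short-circuits this step. Either way, once periodicity with period $d$ is in hand, the power-series manipulation and the final rationality statement are immediate.
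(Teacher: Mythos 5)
Your overall strategy matches the paper's: establish that $\langle \Sym^{m}(B)\rangle$ is periodic in $m$ with period $d$ (and that $\langle \Sym^{d}(B)\rangle = 1$), and then the power-series identity and the final rationality statement are formal. The paper's actual proof is a two-line citation: it invokes Koll\'ar's theorem on symmetric powers of Severi--Brauer varieties (\cite[Theorem~1]{Kollar}), which states that $\Sym^m(B)$ is stably birational to $\Sym^{(m,d)}(B)$ for the gcd $(m,d)$, and is rational precisely when $d \mid m$. From $(m+d,d)=(m,d)$, periodicity follows; from the $d \mid m$ case, $\langle\Sym^d(B)\rangle = 1$.

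Your self-contained sketch of the periodicity has genuine gaps. First, you assert $\langle \Sym^d(B)\rangle = 1$ but your justification (the ``$\operatorname{Nrd}$'' remark, and the existence of a rational point on $\Sym^d(B)$) does not give rationality --- a rational point is much weaker than stable rationality, and this is one of the nontrivial parts of Koll\'ar's theorem, not something one reads off the norm form. Second, the product decomposition $\Sym^{i+d}(B) \sim_{\mathrm{sb}} \Sym^i(B)\times\Sym^d(B)$ is not justified by the stratification argument you indicate: the stratum of $\Sym^{i+d}(B)$ where $d$ of the points collide to a single fat point is locally closed of high codimension, not open, so it cannot supply a birational model of $\Sym^{i+d}(B)$. (In fact Koll\'ar's result is stronger than a $d$-shift identity --- it reduces directly to the gcd --- and is not proved via such a decomposition.) Your alternate route via the function field of $\Sym^d(B)$ and index-dropping is closer in spirit to how one actually argues, but as written it is a heuristic, not a proof. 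The cleanest fix, which you anticipated, is simply to cite the correct reference: the relevant statement is Koll\'ar's \emph{Symmetric powers of Severi--Brauer varieties} (Ann.\ Fac.\ Sci.\ Toulouse 2018), Theorem~1, after which the rest of your argument goes through verbatim.
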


\begin{proof}

Denote by $(k, n)$ the greatest common divisor of two integers $k$ and $n$. By \cite[Theorem 1]{Kollar}, for every positive integer $m$ there are stable birational equivalences
\[
\Sym^{m+d}(B) \overset{\text{\text{stab}}}{\sim} \Sym^{(m+d, \, d)}(B) = \Sym^{(m, \, d)}(B) \overset{\text{\text{stab}}}{\sim} \Sym^{m}(B).
\]
Formula (\ref{zeta_function_modulo_L}) now follows immediately.
\end{proof}

More generally, it is natural to ask:
\begin{question}
    Let $X$ be a smooth Fano variety over $K$. Is it true that $Z_{\SB}(X, t)$ is a rational function?
\end{question}

Let us also mention two more applications.

The following is a generalization of Theorem 19 from \cite{Litt} on stable birationality of symmetric powers.

\begin{proposition} \label{sym_are_not_stably_birational}
    
Let $X$ be a smooth connected complex projective variety of dimension $d > 1$.
\begin{itemize}
    \item[(a)] If $\kappa(X) \ge 0$, then $\Sym^m(X)$ is not stably birational to $\Sym^l(X)$ for all $m \neq l$.
    \item[(b)] If $H^0(X, \Omega^{2i}_X) \neq 0$ for some $i > 0$, then $\Sym^m(X)$ is not stably birational to $\Sym^l(X)$ for all $m > ld/(2i)$.
\end{itemize}

\end{proposition}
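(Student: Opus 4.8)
The plan is to observe that the measures $\mu_k$ descend to $K_0(\cV_{\CC})/(\LL) \cong \ZZ[\SB/\CC]$ and therefore, via Corollary \ref{class_of_resolution_coincide}, give stable birational invariants of the singular varieties $\Sym^m(X)$, and then to quote the degree estimates already established inside the proof of Theorem \ref{irrationality}. Concretely: if $\Sym^m(X)$ is stably birational to $\Sym^l(X)$, then resolving and crossing with projective spaces shows that the smooth connected projective resolutions $Y_m$ and $Y_l$ are stably birational, so $\langle Y_m\rangle = \langle Y_l\rangle$ in $\SB/\CC$; since $\mu_k$ is a homomorphism of semigroups on $\SB/\CC$ and $[Y_m] = [\Sym^m(X)]$ in $K_0(\cV_\CC)/(\LL)$ by Corollary \ref{class_of_resolution_coincide}, we get $\mu_k(Y_m) = \mu_k(Y_l)$ in $\ZZ[M]$ for every $k \ge 1$. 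In particular $\deg \mu_k(Y_m) = \deg \mu_k(Y_l)$, and it remains to see that a suitable $k$ makes this degree detect $m$.

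For part (a), I would choose an even integer $n>0$ with $H^0(X,\omega_X^{\otimes n}) \neq 0$; such $n$ exists because $\kappa(X)\ge 0$ (replacing a section $s$ by $s\otimes s$ doubles the exponent). Then $nd$ is even, so Lemma \ref{arapura} gives $h_n^{md}(Y_m) = \dim \Sym^m H^0(X,\omega_X^{\otimes n}) \neq 0$ for all $m\ge 1$; since $\dim Y_m = md$, the polynomial $\mu_n(Y_m)$ has leading term $h_n^{md}(Y_m)\,s^{md}$, so $\deg\mu_n(Y_m) = md$ (and $\deg\mu_n(Y_0) = \deg\mu_n(\Spec\CC) = 0 = 0\cdot d$ as well). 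Hence stable birationality of $\Sym^m(X)$ and $\Sym^l(X)$ would force $md = ld$, i.e.\ $m = l$.

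For part (b), pick $i>0$ with $H^0(X,\Omega^{2i}_X)\neq 0$, and note $2i\le d$. As in the proof of Theorem \ref{irrationality}(b), the Künneth embedding $H^0(X,\Omega^{2i}_X)^{\otimes m}\hookrightarrow H^0(X^m,\Omega^{2im}_{X^m})$ is $S_m$-equivariant (because $2i$ is even), giving $\Sym^m H^0(X,\Omega^{2i}_X)\subset H^0(X^m,\Omega^{2im}_{X^m})^{S_m} = H^0(Y_m,\Omega^{2im}_{Y_m})$, the last equality by Proposition \ref{invariants}; hence $h_1^{2im}(Y_m)\neq 0$ and $\deg\mu_1(Y_m) \ge 2im$. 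On the other hand $\deg\mu_1(Y_m)\le \dim Y_m = dm$. If $\Sym^m(X)$ were stably birational to $\Sym^l(X)$, then $2im \le \deg\mu_1(Y_m) = \deg\mu_1(Y_l) \le dl$, so $m\le dl/(2i)$; contrapositively, $\Sym^m(X)$ is not stably birational to $\Sym^l(X)$ whenever $m > dl/(2i)$.

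There is no real obstacle here beyond the bookkeeping: essentially all the work has already been done in Sections \ref{sec:singularities} and \ref{section:main}. The one point that genuinely requires the machinery of this paper is the passage to the smooth resolutions $Y_m$, which relies on Theorem \ref{resolution} (through Corollary \ref{class_of_resolution_coincide}) to guarantee that $\mu_k(\Sym^m(X))$ is honestly a stable birational invariant, together with the canonicity of $\Sym^m(X)$ used in Proposition \ref{invariants}. The degenerate cases $m=0$ or $l=0$ are covered by the same degree count, and in part (b) the inequality $d\ge 2i$ gives $dl/(2i)\ge l$, so the hypothesis $m > dl/(2i)$ automatically entails $m\neq l$.
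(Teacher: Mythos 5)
Your proof is correct and follows the same route as the paper: pass to the resolutions $Y_m$, $Y_l$, observe that stable birationality of the symmetric powers forces stable birationality of $Y_m$ and $Y_l$, and then compare the degrees of $\mu_k(Y_m)$ and $\mu_k(Y_l)$ using the bounds established in the proof of Theorem \ref{irrationality}. One small conceptual slip in your closing remarks: as written, your argument never actually needs Corollary \ref{class_of_resolution_coincide} (or Theorem \ref{resolution}), since you pass directly to the smooth resolutions and never compare $\mu_k(\Sym^m(X))$ with $\mu_k(Y_m)$; what the argument genuinely needs from the earlier sections is Proposition \ref{invariants} and Lemma \ref{arapura}, which in turn rest on the Arapura--Archava result that $\Sym^m(X)$ has canonical singularities.
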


\begin{proof}

Assume that $\Sym^l(X) \overset{\text{\text{stab}}}{\sim} \Sym^m(X)$ for some $l, m \in \ZZ_{>0}$. Then we have $\widetilde{\Sym}^l(X) \overset{\text{\text{stab}}}{\sim} \widetilde{\Sym}^m(X)$, and hence $[\widetilde{\Sym}^l(X)] = [\widetilde{\Sym}^m(X)]$ in $K_0(\cV_K)/(\LL)$.

Let us take $k$ as in the proof of Theorem \ref{irrationality_result}, and let $g_s \defeq \mu_k(\widetilde{\Sym}^s(X))$. Then, if the assumptions of the proposition hold, the polynomials $g_m$ and $g_l$ have different degrees (which was shown in the course of the proof of Theorem \ref{irrationality_result}). In particular, $g_m \neq g_l$, and so $[\widetilde{\Sym}^m(X)] \neq [\widetilde{\Sym}^l(X)]$ in $K_0(\cV_K)/(\LL)$, which is a contradiction.
\end{proof}

The following proposition was already stated in \cite[Question 6.7]{LL2}.

\begin{proposition} \label{hom_of_lambda}
    
The motivic measure
\[
\mu_1: K_0(\cV_{\CC}) \to \ZZ[M]
\]
is a homomorphism of $\lambda$-rings.

\end{proposition}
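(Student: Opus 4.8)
The plan is to use the characterization of $\lambda$-homomorphisms via generators and relations, exactly as in Bittner's presentation (Theorem \ref{bittner}). Since $\mu_1$ is already known to be a motivic measure, I only need to check that it commutes with all $\lambda$-operations, i.e. that $\mu_1(\lambda^n([Y])) = \lambda^n(\mu_1([Y]))$ for a generating set of classes $[Y] \in K_0(\cV_{\CC})$. Recalling that $\lambda^n([Y]) = [\Sym^n(Y)]$ and that the $\lambda$-structure on $\ZZ[M]$ is the special one built from graded (virtual) vector spaces with $\Sym$ in even degrees and $\Lambda$ in odd degrees, the content of the statement is: for $Y$ smooth connected projective of dimension $d$,
\[
\mu_1(\Sym^n(Y)) = \lambda^n\!\left(1 + \sum_{i=1}^d h_1^i(Y)\, s^i\right),
\]
where $h_1^i(Y) = \dim H^0(Y, \Omega^i_Y)$ and the right-hand side is computed degree by degree using the rule $\lambda^j(V_p s^p) = \Sym^j(V_p) s^{jp}$ if $p$ even, $\Lambda^j(V_p) s^{jp}$ if $p$ odd.

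First I would replace $\Sym^n(Y)$ by a resolution $\widetilde{\Sym}^n(Y)$: by Theorem \ref{resolution} (applicable since $Y$ is smooth, hence has $\LL$-rational singularities) together with Corollary \ref{class_of_resolution_coincide}, one has $[\widetilde{\Sym}^n(Y)] = [\Sym^n(Y)]$ in $K_0(\cV_{\CC})/(\LL)$, and since $\mu_1$ factors through $K_0(\cV_{\CC})/(\LL)$ we get $\mu_1(\Sym^n(Y)) = \mu_1(\widetilde{\Sym}^n(Y))$. Now $\widetilde{\Sym}^n(Y)$ is smooth connected projective, so $\mu_1(\widetilde{\Sym}^n(Y)) = 1 + \sum_j h_1^j(\widetilde{\Sym}^n(Y)) s^j$ with $h_1^j(\widetilde{\Sym}^n(Y)) = \dim H^0(\widetilde{\Sym}^n(Y), \Omega^j)$. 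By Proposition \ref{invariants} (valid when $\dim Y > 1$) this equals $\dim H^0(Y^n, \Omega^j_{Y^n})^{S_n}$; by the Künneth decomposition \eqref{Kunneth} and the sign analysis already used in the proof of Theorem \ref{irrationality}, the $S_n$-invariants of $\bigoplus_{i_1 + \cdots + i_n = j} H^0(Y,\Omega^{i_1}) \otimes \cdots \otimes H^0(Y,\Omega^{i_n})$ decompose into a tensor product over the degrees $p$, contributing $\Sym$ of $H^0(Y,\Omega^p)$ when $p$ is even and $\Lambda$ of it when $p$ is odd — precisely matching the formula for $\lambda^n$ on $\ZZ[M]$. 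This handles all $Y$ with $\dim Y \ge 2$; the curve and point cases ($\dim Y \le 1$) are either trivial or reduce to a direct check that $\Sym^n$ of a curve behaves as predicted, and $\PP^m$ classes are absorbed since $\mu_1(\PP^m) = 1$.

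The one genuine subtlety, and the step I expect to require the most care, is that Bittner's theorem produces $K_0(\cV_{\CC})$ as a \emph{quotient} of the free group on smooth projective varieties by the blow-up relations, whereas the $\lambda$-operations $[Y] \mapsto [\Sym^n(Y)]$ are defined on generators and extended via the universal polynomial identities \eqref{eq:lambda} — they are not a priori additive in any naive sense. So to conclude that $\mu_1$ is a $\lambda$-homomorphism I must verify compatibility not just on the generators $[Y]$ but on sums, which amounts to checking that $\mu_1$ and $\lambda$ both respect the addition formula $\lambda^n(x+y) = \sum_{i+j=n} \lambda^i(x)\lambda^j(y)$ in a compatible way. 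Concretely: using multiplicativity of $\mu_1$ (already established) and the decomposition $\Sym^n(A \sqcup B) = \bigsqcup_{i+j=n} \Sym^i(A) \times \Sym^j(B)$ at the level of the Grothendieck ring, the map $[Y] \mapsto Z_{\mu_1}(Y,t) = \sum_n \mu_1(\Sym^n Y) t^n$ is a group homomorphism from $(K_0(\cV_{\CC}), +)$ to $(1 + t\,\ZZ[M][[t]], \times)$; comparing this with the fact that $\lambda_t \colon \ZZ[M] \to 1 + t\,\ZZ[M][[t]]$ is also an additive-to-multiplicative homomorphism, the desired identity $Z_{\mu_1}(Y,t) = \lambda_t(\mu_1(Y))$ for \emph{all} classes follows once it is known on a generating set — which is exactly what the paragraph above establishes for smooth projective $Y$. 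Thus the logical skeleton is: (i) both sides are exponential in the sense of turning $+$ into $\times$; (ii) Bittner gives a generating set of smooth projective varieties; (iii) on that generating set the identity is the content of Proposition \ref{invariants} plus Künneth, modulo the low-dimensional cases. Assembling (i)--(iii) gives the result.
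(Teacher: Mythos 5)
Your proposal is correct and follows essentially the same route as the paper: pass from $\Sym^n(Y)$ to a resolution via Corollary \ref{class_of_resolution_coincide}, use Proposition \ref{invariants} to rewrite the Hodge numbers of the resolution as $S_n$-invariants of those of $Y^n$, and then match this against the $\lambda$-structure on $\ZZ[M]$. The ``K\"unneth plus sign analysis'' step you sketch is exactly the content of Lemma \ref{lemma_for_hom_of_lambda} (= Lemma 6.5 of \cite{LL2}), which the paper simply cites; and the paper handles the $\dim Y \le 1$ case by noting Proposition \ref{invariants} still holds there, rather than by a separate check as you suggest, but these are cosmetic differences.
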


\begin{lemma}\cite[Lemma 6.5]{LL2} \label{lemma_for_hom_of_lambda}
Let $Y$ be a smooth projective variety over $K$ and let $\cF$ be a vector bundle on $Y$ of rank $r$. For a fixed $m$ denote by $\pi_l: X^m \to X$ the projection to the $l$-th factor and put $\cF^{\boxplus m} \defeq \pi_1^{\ast}\cF \oplus \cdots \oplus \pi_m^{\ast}\cF$. There is a natural isomorphism of graded vector spaces:
\[
\sum_j H^0(Y^m, \Lambda^{j}(\cF^{\boxplus m}))^{S_m} t^j = \lambda^m\Bigl(\sum_j H^0(Y, \Lambda^{j}(\cF)) t^j \Bigr).
\]
    
\end{lemma}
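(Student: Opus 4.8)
The plan is to make both sides of the identity completely explicit and match them summand by summand; the only real content is keeping careful track of Koszul signs in the $S_m$-action. First I would expand the exterior power of a direct sum: writing $\cF^{\boxplus m}=\bigoplus_{l=1}^{m}\pi_l^{\ast}\cF$ and iterating the canonical decomposition $\Lambda^{j}(\cG\oplus\cG')\cong\bigoplus_{a+b=j}\Lambda^{a}\cG\otimes\Lambda^{b}\cG'$, one gets an $S_m$-equivariant isomorphism
\[
\Lambda^{j}(\cF^{\boxplus m})\;\cong\;\bigoplus_{j_1+\cdots+j_m=j}\pi_1^{\ast}\Lambda^{j_1}\cF\otimes\cdots\otimes\pi_m^{\ast}\Lambda^{j_m}\cF,
\]
in which a permutation $\sigma\in S_m$ carries the $(j_1,\dots,j_m)$-summand to the summand indexed by the permuted tuple, and reordering the wedge factors back into standard position introduces the Koszul sign $\prod_{\{a<b\,:\,\sigma(a)>\sigma(b)\}}(-1)^{j_{a}j_{b}}$.

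Next I would apply the Künneth formula for coherent cohomology on the proper varieties $Y^{m}$ to each summand, which is $S_m$-equivariant for the same signed permutation action, yielding
\[
H^{0}\bigl(Y^{m},\Lambda^{j}(\cF^{\boxplus m})\bigr)\;\cong\;\bigoplus_{j_1+\cdots+j_m=j}H^{0}(Y,\Lambda^{j_1}\cF)\otimes\cdots\otimes H^{0}(Y,\Lambda^{j_m}\cF).
\]
To take $S_m$-invariants I would group the tuples $(j_1,\dots,j_m)$ by their type $(m_0,\dots,m_r)$, where $m_p=\#\{a:j_a=p\}$, so that $\sum_p m_p=m$ and $\sum_p p\,m_p=j$; the $S_m$-orbit of a tuple of type $(m_0,\dots,m_r)$ is induced from the Young subgroup $S_{m_0}\times\cdots\times S_{m_r}$, so by Frobenius reciprocity the invariants in $t$-degree $j$ are the direct sum over types of the $(S_{m_0}\times\cdots\times S_{m_r})$-invariants of $\bigotimes_{p}H^{0}(Y,\Lambda^{p}\cF)^{\otimes m_p}$, with $S_{m_p}$ permuting the $m_p$ factors of the $p$-th block.

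The crucial observation is that a transposition inside the $p$-block acquires the Koszul sign $(-1)^{p\cdot p}=(-1)^{p}$; hence, in characteristic zero, the invariants of the $p$-block are $\Sym^{m_p}H^{0}(Y,\Lambda^{p}\cF)$ when $p$ is even and $\Lambda^{m_p}H^{0}(Y,\Lambda^{p}\cF)$ when $p$ is odd. Multiplying by $t^{j}$ and summing over $j$, the left-hand side of the lemma thus becomes
\[
\sum_{m_0+\cdots+m_r=m}\ \prod_{p\ \mathrm{even}}\Sym^{m_p}\!\bigl(H^{0}(Y,\Lambda^{p}\cF)\bigr)t^{p m_p}\ \prod_{p\ \mathrm{odd}}\Lambda^{m_p}\!\bigl(H^{0}(Y,\Lambda^{p}\cF)\bigr)t^{p m_p}.
\]
By the definition of the $\lambda$-operations on graded vector spaces used to define the $\lambda$-structure on $\ZZ[M]$ (namely $\lambda^{k}(Vt^{p})=\Sym^{k}(V)t^{kp}$ for $p$ even and $\lambda^{k}(Vt^{p})=\Lambda^{k}(V)t^{kp}$ for $p$ odd), together with the additivity axiom $\lambda^{m}\bigl(\sum_p x_p\bigr)=\sum_{\sum_p m_p=m}\prod_p\lambda^{m_p}(x_p)$ applied to $x_p=H^{0}(Y,\Lambda^{p}\cF)t^{p}$, this expression is exactly $\lambda^{m}\bigl(\sum_{p}H^{0}(Y,\Lambda^{p}\cF)t^{p}\bigr)$, which is the claimed identity, and naturality is clear since every step is functorial.

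I expect the main obstacle to be purely a matter of sign bookkeeping: one must check that the Koszul signs produced in the first two steps — from expanding $\Lambda^{\bullet}$ of a direct sum and from commuting external tensor factors under Künneth — combine to exactly one factor of $(-1)^{p}$ per transposition within the $p$-block. This is precisely what makes even-degree blocks contribute symmetric powers and odd-degree blocks exterior powers, in agreement with the $\lambda$-structure on $\ZZ[M]$; everything else (the decomposition of $\Lambda^{j}$ of a sum, the Künneth formula, and the passage to invariants via Frobenius reciprocity) is formal.
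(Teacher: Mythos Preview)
The paper does not give its own proof of this lemma; it is quoted from \cite[Lemma 6.5]{LL2} and used as a black box in the proof of Proposition \ref{hom_of_lambda}. Your argument is correct and is the expected one: expand $\Lambda^{\bullet}$ of a direct sum, apply K\"unneth, and compute $S_m$-invariants orbit by orbit, the decisive point being that the Koszul sign $(-1)^{p^{2}}=(-1)^{p}$ on a transposition inside a degree-$p$ block forces symmetric powers for even $p$ and exterior powers for odd $p$, which is precisely the $\lambda$-structure on $\ZZ[M]$ defined in Section \ref{Measures}.
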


\begin{proof}[Proof of Proposition \ref{hom_of_lambda}.] By Proposition \ref{invariants}, for every smooth connected complex projective variety $X$ we have $H^0(Y_m, \Omega^i_{Y_m}) = H^0(X^m, \Omega^i_{X^m})^{S_m}$ for all $m, i \ge 1$\footnote{Technically, in Proposition \ref{invariants} we assumed that $\dim(X) > 1$, but it is straightforward to see that the equality also holds if $\dim(X) \le 1$.}. Moreover, $\mu_1(\lambda^m(X)) = \mu_1(Y_m)$ by Corollary \ref{class_of_resolution_coincide}, so we need to prove that
\[
1 + \sum_{i=1}^d H^0(X^m, \Omega^i_{X^m})^{S_m} t^i = \lambda^m \Bigl(1 +  \sum_{i=1}^d H^0(X, \Omega^i_X) t^i\Bigr)
\]
for all $m \ge 1$. This follows from Lemma \ref{lemma_for_hom_of_lambda} applied for  $Y = X$ and $\cF = \Omega^1_X$.
    
\end{proof}

\newpage

\end{document}